
\documentclass[reqno]{amsart}

\usepackage{a4wide}
\setlength {\marginparwidth }{2cm}

\usepackage[english]{babel} 

\usepackage{amsmath,amssymb,amsfonts,amsthm, mathrsfs, esint, dsfont}
\usepackage{moreverb,rotating,graphics, tikz, caption, subcaption, float}
\usepackage{framed,fancybox}
\usepackage{enumerate, enumitem}
\usepackage{bbm, color, csquotes}

\usepackage[colorlinks, linkcolor={blue}, citecolor={red}, urlcolor = {blue}]{hyperref}

\usepackage[backend=biber,
style=alphabetic,
maxbibnames=99,
doi=false, url=false, giveninits=true, date=year]{biblatex}

\AtEveryBibitem{
    \clearfield{issn}
}    
\addbibresource{General_concave_nonlinearity.bib}

\newtheorem{theorem}{Theorem}[section]
\newtheorem{proposition}[theorem]{Proposition}
\newtheorem{remark}[theorem]{Remark}
\newtheorem{lemma}[theorem]{Lemma}
\newtheorem{conjecture}[theorem]{Conjecture}
\newtheorem{corollary}[theorem]{Corollary}

\theoremstyle{definition}


\newcommand{\RR}{\mathbb{R}}

\newcommand{\ri}{\mathrm{i}}
\newcommand{\cS}{\mathscr{S}}

\newcommand{\cE}{\mathcal{E}}
\newcommand{\cL}{\mathcal{L}}

\newcommand{\cR}{\mathcal{R}}

\newcommand{\R}{\mathbb{R}}
\newcommand{\N}{\mathbb{N}}

\newcommand\be{{\mathbf e}}

\newcommand\bk{{\mathbf k}}

\newcommand\bx{{\mathbf x}}
\newcommand\by{{\mathbf y}}

\newcommand\bnull{{\mathbf 0}}


\def\cB{{\mathcal B}}

\def\cE{{\mathcal E}}
\def\cF{{\mathcal F}}

\def\cI{{\mathcal I}}

\def\cL{{\mathcal L}}
\def\cM{{\mathcal M}}

\def\cP{{\mathcal P}}

\def\cR{{\mathcal R}}
\def\cS{{\mathcal S}}

\def\fa{{\mathfrak{a}}}



\def\rd{{\mathrm{d}}}
\def\re{{\mathrm{e}}}
\def\ri{{\mathrm{i}}}

\newcommand\1{{\ensuremath {\mathds 1} }} 



\newcommand{\norm}[1]{\left\| #1 \right\|}


\newcommand{\myfootnote}[1]{
    \renewcommand{\thefootnote}{}
    \footnotetext{\scriptsize#1}
    \renewcommand{\thefootnote}{\arabic{footnote}}
}

\def\NN{{\mathbb N}}

\def\RR{{\mathbb R}}

\newcommand\ii{{\infty}}

\newcommand{\set}[1]{\left\{ #1\right\}}    
\newcommand{\bra}[1]{\left( #1\right)}
\newcommand{\av}[1]{\left| #1\right|}

\def\fa{{\mathfrak a}}
\def\fb{{\mathfrak b}}
\def\fc{{\mathfrak c}}
\newcommand{\Tr}{{\rm Tr}}
\newcommand{\extreme}{{\rm Extreme}}


\title[]{Existence and non-existence of minimizers for a multi--particle model with concave nonlinearity}

\author{David Gontier \qquad Salma Lahbabi$^*$ \qquad Simona Rota Nodari}

\date{\today}


\begin{document}

    \myfootnote{David Gontier: CEREMADE, Université Paris-Dauphine, PSL University, 75016 Paris, France \& ENS/PSL University, DMA, F-75005, Paris, France;\\
        email: \href{gontier@ceremade.dauphine.fr}{gontier@ceremade.dauphine.fr}}
    \myfootnote{Salma Lahbabi: Equipe de Mathématiques Appliquées, LARILE, ENSEM, Université Hassan II de Casablanca, Casablanca, Morocco;\\
        email: \href{s.lahbabi@ensem.ac.ma}{s.lahbabi@ensem.ac.ma}}
     \myfootnote{Simona Rota Nodari: Laboratoire J.A. Dieudonné, Université Côte d'Azur, CNRS UMR  7351, 06108 Nice, France \& Institut Universitaire de France;\\
        email: \href{simona.rotanodari@univ-cotedazur.fr}{simona.rotanodari@univ-cotedazur.fr}}

    \begin{abstract}
        We study a multi--particle model including a kinetic energy and a non linear local self-interaction, both in the bosonic and fermionic cases. In both cases, we prove that the model is well-posed if the number of particles is large enough. In particular, we show that there is a nonlinearity for which the model with $N=2$ particles is well-posed, while the model with $N=1$ is not.
        
        \bigskip
        \noindent \sl \copyright~2025 by the authors. This paper may be reproduced, in its entirety, for non-commercial purposes.
    \end{abstract}
    
    \maketitle    
    
    \tableofcontents


\section{Introduction}

\subsection{Presentation of the model and main results}
The main goal of this paper is to exhibit a model for which the problem with $N = 2$ particles is well-posed, while the problem with $N = 1$ is not. In some sense, the model with $N = 2$ particles is stable only because of the presence of the nonlinear effects between the two particles. In the bosonic case, such a phenomenon has been observed in~\cite{LewRot-20}, where it is shown that the corresponding problem is well--posed only when the total mass of the particles is large enough. We are able to adapt the results in the fermionic case, using tools from~\cite{GonLewNaz-21}. In order to state our results, let us describe our model.

We study a non--linear model for multi-particle systems, in a bosonic regime and in an uncorrelated fermionic regime. In this setting, a system of $\lambda \ge 0$ particles is described by a positive one--body density matrix $\gamma \in \cS \left( L^2(\R^d) \right)$, where $\cS \left( L^2(\R^d) \right)$ is the set of self-adjoint bounded operators acting on $L^2(\R^d)$, satisfying $\gamma \ge 0$ and $\Tr(\gamma)  = \lambda$. Usually, $\lambda$ is an integer ($\lambda = N$ for a system with $N$ particles), but it is interesting to study this problem for any $\lambda \ge 0$. For such an operator, we denote by $\rho_\gamma$ its density. In the fermionic case, we have the additional Pauli principle $0 \le \gamma \le 1$.

\medskip

For such a system described by $\gamma \in \cS \left( L^2(\R^d) \right)$, we consider the energy
\begin{equation} \label{eq:def:Ealpha}
    \boxed{ \cE_\alpha(\gamma) := \Tr\bra{-\Delta \gamma}+ \int_{\R^d} F_\alpha(\rho_\gamma).}
\end{equation}
The first term represents the kinetic energy of the system, while the second term is a non--linear local self-interaction. In this work, we focus on the  case where $F_\alpha$ is of the form
 \begin{equation} \label{eq:scaling_Falpha}
    F_\alpha(t) := \alpha^{q} F_1\left( \frac{t}{\alpha} \right),
    \quad \text{with} \quad
     F_1(t) := \begin{cases}
        - t^q & \quad t \le 1 \\
        - \fa + \fb t - \fc t^{r} & \quad t \ge 1,
    \end{cases}
\end{equation}
where the parameters $r$ and $q$, and the constants $\fa$, $\fb$ and $\fc$ satisfy
\[
    \boxed{ 1 < r < \frac{d+2}{d} < q, } \qquad \text{and} \qquad 
    \fa := \frac{(q-1)(q-r)}{r} , \quad \fb := \frac{q(q-r)}{r-1}, \quad \fc := \frac{q(q-1)}{r(r-1)}.
\]
These parameters are chosen so that the function $F_1$ is concave, of class $C^2$, is supercritical as $t \to 0$ and subcritical as $t \to \infty$ (see Figure~\ref{fig:F1}).
\begin{figure}[!h]
	\includegraphics[scale=0.4]{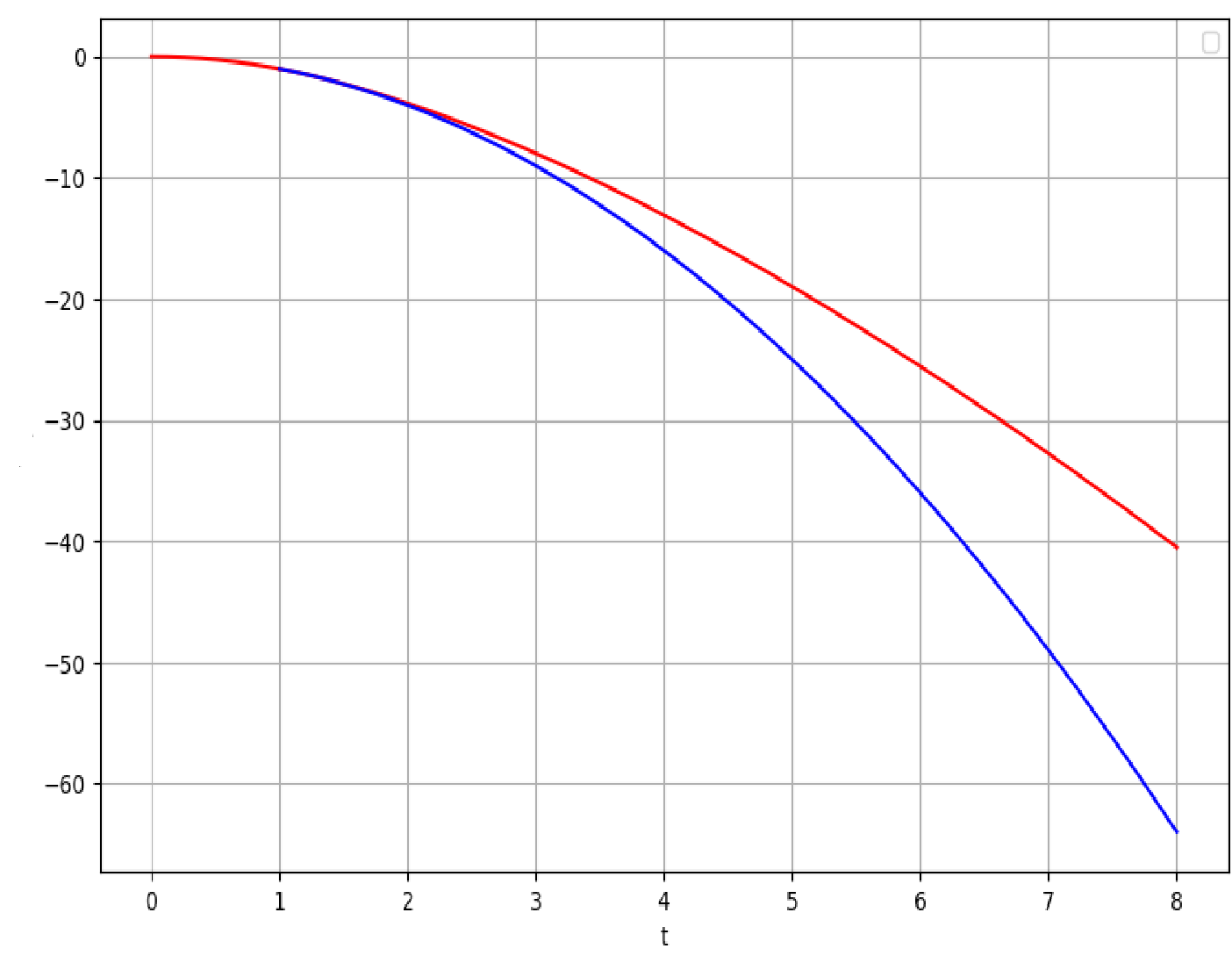}
	\caption{The function $F_1$ in red for the choice of parameters $d=3, r=4/3$ and $q=2$. In blue, the function $-t^q$.}
		\label{fig:F1}
\end{figure}
 Our scaling in~\eqref{eq:scaling_Falpha} is chosen so that $F_\alpha(t) = -t^q$ on $[0, \alpha]$. One should therefore think of the function $F_\alpha$ as a concave regularization of the function $-t^q$ (in the sense that $F_\alpha$ decays slower than $-t^q$ at infinity, since $r < q$), and where the regularization happens at $t = \alpha$.

\medskip

We consider the minimization problems
\begin{align*}
    J_\alpha(\lambda) & := \inf\set{  \cE_\alpha(\gamma), \quad  \gamma \in \cS \left( L^2(\R^d) \right),  \quad  \Tr(\gamma)=\lambda }, \quad \text{(bosonic case)}\\
    I_\alpha(\lambda) & :=  \inf\set{\cE_\alpha(\gamma), \quad \gamma \in \cS \left( L^2(\R^d) \right), \quad 0 \le \gamma \le 1, \quad  \Tr(\gamma)=\lambda}, \quad  \text{(fermionic case)}.
\end{align*}

\begin{theorem}[Bosonic case]
    \label{th:main_bosonic_intro}
Let $d\geq 2$.    For all $\alpha > 0$, there is $\lambda_c(\alpha) > 0$ so that for all $\lambda \ge \lambda_c(\lambda)$, the bosonic  problem has a minimizer, while for all $0 < \lambda < \lambda_c(\alpha)$, it does not.
\end{theorem}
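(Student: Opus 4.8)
\emph{Strategy.} The plan is to reduce the bosonic problem to a one-body nonlinear Schrödinger minimization and then run a concentration--compactness argument, following the template of \cite{LewRot-20}.

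\textbf{Step 1 (reduction and elementary properties).} Since $\Tr(-\Delta\gamma)$ is linear in $\gamma$ and $\int_{\R^d}F_\alpha(\rho_\gamma)$ depends on $\gamma$ only through $\rho_\gamma$, the Hoffmann--Ostenhof inequality $\Tr(-\Delta\gamma)\ge\int_{\R^d}|\nabla\sqrt{\rho_\gamma}|^2$, which is an equality when $\gamma$ is rank one, gives
\[
 J_\alpha(\lambda)=\inf\Big\{\cJ_\alpha(v):=\int_{\R^d}|\nabla v|^2+\int_{\R^d}F_\alpha(|v|^2)\ :\ v\in H^1(\R^d),\ \|v\|_{L^2(\R^d)}^2=\lambda\Big\},
\]
and the bosonic problem has a minimizer iff $\cJ_\alpha$ does, via $\gamma=|v\rangle\langle v|$ with $\rho_\gamma=|v|^2$; a scaling $v(x)=\alpha^{1/2}w(\alpha^{(q-1)/2}x)$ moreover reduces everything to $\alpha=1$, though I keep $\alpha$ general. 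One then records the basic facts: (i) $J_\alpha\le 0$ (spread out a fixed profile; both terms tend to $0$ using $F_\alpha\le0$ and $F_\alpha(t)=-t^q$ near $0$); (ii) $J_\alpha>-\infty$ and minimizing sequences are bounded in $H^1$, from $F_\alpha(t)\ge -C_\alpha t^r$, Gagliardo--Nirenberg, and the subcriticality $r<\tfrac{d+2}d$ (so $d(r-1)<2$ and $\|\nabla v\|_{L^2}^2-C\|\nabla v\|_{L^2}^{d(r-1)}$ is coercive); (iii) $\lambda\mapsto J_\alpha(\lambda)/\lambda$ is non-increasing --- this is where concavity enters, since $t\mapsto F_\alpha(t)/t$ is strictly decreasing on $(0,\infty)$ (concavity plus $F_\alpha(0)=0$), so for $\mu<\lambda$ and $\|v\|_{L^2}^2=\mu$ one has $\cJ_\alpha(\sqrt{\lambda/\mu}\,v)\le\tfrac\lambda\mu\cJ_\alpha(v)$, hence $J_\alpha(\lambda)\le\tfrac\lambda\mu J_\alpha(\mu)$; in particular $J_\alpha$ is non-increasing and strictly decreasing wherever it is negative; (iv) $J_\alpha(\lambda)<0$ for $\lambda$ large --- with $\|\phi\|_{L^2}=1$ and $\sigma=(\lambda/\alpha)^{1/d}$, the test function $v_\sigma(x)=\sqrt\lambda\,\sigma^{-d/2}\phi(x/\sigma)$ gives $\cJ_\alpha(v_\sigma)=\lambda^{1-2/d}\alpha^{2/d}\|\nabla\phi\|_{L^2}^2+\lambda\,\alpha^{q-1}\int_{\R^d}F_1(|\phi|^2)$, which is $<0$ for $\lambda$ large since $\int F_1(|\phi|^2)<0$. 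Combined with Step 2 these let one set $\lambda_c(\alpha):=\sup\{\lambda\ge0:J_\alpha(\lambda)=0\}\in(0,\infty)$, with $J_\alpha\equiv0$ on $[0,\lambda_c(\alpha))$ and $J_\alpha<0$ on $(\lambda_c(\alpha),\infty)$.

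\textbf{Step 2 (non-existence for small mass).} The crux is that $J_\alpha(\lambda)=0$ for $\lambda$ small, and this is exactly where supercriticality at $0$ ($q>\tfrac{d+2}d$) is used. For $v$ with $\|v\|_{L^2}^2=\lambda$ one bounds $\int F_\alpha(|v|^2)$ below by splitting $\{|v|^2\le\alpha\}$, where $F_\alpha(|v|^2)=-|v|^{2q}$ and Gagliardo--Nirenberg (with a power $2q_0\le 2q$ that is $>2+\tfrac4d$ and below the Sobolev exponent) gives $\int_{\{|v|^2\le\alpha\}}|v|^{2q}\lesssim\|\nabla v\|_{L^2}^{d(q_0-1)}\lambda^{\theta_1}$ with $d(q_0-1)>2$, $\theta_1>0$, and $\{|v|^2>\alpha\}$, where $F_\alpha(|v|^2)\ge -C_\alpha|v|^{2r}$ but the high-density set has measure $\lesssim\|\nabla v\|_{L^2}^{2^*}$. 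Balancing these against $\|\nabla v\|_{L^2}^2$, the positive powers of $\lambda$ force $\cJ_\alpha(v)\ge0$ once $\lambda$ is small enough, so $J_\alpha(\lambda)=0$ there. Finally, $J_\alpha(\lambda)=0$ cannot be attained: $\cJ_\alpha(v)=0$ forces $\|\nabla v\|_{L^2}=0$ and $\int F_\alpha(|v|^2)=0$ (the two terms having opposite signs), hence $v\equiv0$, contradicting $\lambda>0$. This gives the non-existence statement for $0<\lambda<\lambda_c(\alpha)$.

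\textbf{Step 3 (existence for $\lambda>\lambda_c(\alpha)$).} Here $J_\alpha(\lambda)<0$; take a minimizing sequence $(v_n)$, bounded in $H^1$, and apply Lions' concentration--compactness to the mass densities $(|v_n|^2)$. \emph{Vanishing} is ruled out: if $(v_n)$ vanishes, interpolating the $H^1$-bound with $L^{2^*}$ and using the cut-off of $F_\alpha$ at $t=\alpha$ gives $\int F_\alpha(|v_n|^2)\to0$, so $\liminf\cJ_\alpha(v_n)\ge0>J_\alpha(\lambda)$, a contradiction. \emph{Dichotomy} is ruled out by the strict binding inequality $J_\alpha(\lambda)<J_\alpha(\mu)+J_\alpha(\lambda-\mu)$ for all $\mu\in(0,\lambda)$: writing $\nu=\max(\mu,\lambda-\mu)\ge\lambda/2$, if $\nu\le\lambda_c(\alpha)$ then $J_\alpha(\mu)=J_\alpha(\lambda-\mu)=0>J_\alpha(\lambda)$; if $\nu>\lambda_c(\alpha)$ but $\lambda-\nu\le\lambda_c(\alpha)$ then $J_\alpha(\mu)+J_\alpha(\lambda-\mu)=J_\alpha(\nu)>J_\alpha(\lambda)$ by strict monotonicity of $J_\alpha$ on $(\lambda_c(\alpha),\infty)$; and if both $\mu,\lambda-\mu>\lambda_c(\alpha)$, take a minimizing sequence $(w_n)$ for $J_\alpha(\nu)<0$, which does not vanish by the previous point, so some level set $\{\epsilon\le|w_n|^2\le M\}$ has measure bounded below uniformly in $n$; since $\beta F_\alpha(t)-F_\alpha(\beta t)>0$ for $t>0$, $\beta=\lambda/\nu>1$, this yields a uniform gain $\int\big(\beta F_\alpha(|w_n|^2)-F_\alpha(\beta|w_n|^2)\big)\ge\delta>0$, and rescaling $w_n$ to mass $\lambda$ gives $J_\alpha(\lambda)\le\tfrac\lambda\nu J_\alpha(\nu)-\delta\le J_\alpha(\mu)+J_\alpha(\lambda-\mu)-\delta$. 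Hence compactness holds, $v_n\to v$ strongly in $H^1$ up to translations, $v$ minimizes $\cJ_\alpha$, and $\gamma=|v\rangle\langle v|$ minimizes the bosonic problem.

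\textbf{Main obstacle.} The two delicate points are: establishing the \emph{exact} identity $J_\alpha(\lambda)=0$ for small $\lambda$ in Step 2 --- one must carefully track the interplay of the supercritical regime $|v|^2\lesssim\alpha$ and the subcritical regime $|v|^2\gtrsim\alpha$, which is precisely the mechanism producing non-existence --- and the strict binding inequality in Step 3 when both masses exceed $\lambda_c(\alpha)$, where the gain $\delta$ must be made uniform along a possibly non-compact minimizing sequence; this is exactly where one uses that negative-energy minimizing sequences do not vanish. The remaining arguments are a routine adaptation of \cite{LewRot-20}.
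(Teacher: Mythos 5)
There is a genuine gap: your proof covers only $\lambda>\lambda_c(\alpha)$, while the theorem asserts existence for all $\lambda\ge\lambda_c(\alpha)$, and the endpoint $\lambda=\lambda_c(\alpha)$ is precisely the hard case. At that mass one has $J_\alpha(\lambda_c)=0$, so your Step 3 collapses: vanishing can no longer be excluded by "$\liminf\cJ_\alpha(v_n)\ge 0>J_\alpha(\lambda)$", and dichotomy into two pieces of zero energy is perfectly consistent with subadditivity, so concentration--compactness gives nothing at the zero energy level. The paper devotes most of its effort to exactly this point: it takes minimizers $u_n$ at masses $\lambda_n\downarrow\lambda_c$, passes to their Euler--Lagrange multipliers $\mu_n$, and shows $0<\mu_{\rm min}\le\mu_n\le\mu_{\rm max}<\infty$. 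The lower bound uses that the unique ground state $u_\mu$ of $-\Delta u=-uF_1'(u^2)-\mu u$ satisfies $\cF_1(u_\mu)>0$ for small $\mu$ (Proposition~\ref{prop:mu1_mu2}), which in turn requires the uniqueness theory of McLeod/Serrin--Tang and the whole asymptotic analysis of the zero-mass limit in Appendix~\ref{sec:appendix:asymptotics} (with separate subcritical, critical and supercritical regimes in $q$ relative to $\tfrac{d}{d-2}$); the upper bound uses $\Lambda(\mu)\to\infty$ as $\mu\to\infty$. Nontriviality of the weak limit $u_*$ then follows from the identity $\int|\nabla u_n|^2+\mu_n\lambda_n=-\int u_n^2F_1'(u_n^2)$. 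None of this machinery appears in your proposal, and without it the case $\lambda=\lambda_c$ is not proved.

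For $\lambda>\lambda_c$ your route is workable but heavier than the paper's. The paper avoids strict binding altogether: it rearranges the minimizing sequence to be radial decreasing, uses the compact Strauss embedding $H^1_{\rm rad}\hookrightarrow L^{2r}$ (this is where $d\ge2$ enters), and handles possible mass loss in the limit by strict monotonicity of $J_1$ on $(\lambda_c,\infty)$, which it deduces from concavity of $\lambda\mapsto J_1(\lambda)$ (itself proved via the scaling identity $J_1(\lambda)=\lambda^{1-2/d}f(\lambda^{2/d})$ with $f$ concave). Your strict subadditivity argument in the regime where both sub-masses exceed $\lambda_c$ — extracting a uniform level set of positive measure along a minimizing sequence and using strict superadditivity of $F_\alpha$ — would need to be written out carefully, whereas the paper's concavity argument makes it unnecessary. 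Your Step 2 is also more complicated than needed: the bound $|F_1(t)|\le M\,t^{\frac{d+2}{d}}$ (valid since $q>\tfrac{d+2}{d}>r$) plus one Gagliardo--Nirenberg inequality gives $\cF_1(u)\ge\|\nabla u\|_2^2\bigl(1-MC\lambda^{2/d}\bigr)$ directly, which yields both $\lambda_c>0$ and non-attainment for small $\lambda$ in one stroke.
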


\begin{theorem}[Fermionic case]
    \label{th:main_fermionic_intro}
    Let $d \ge 3$ and $q < 2$. Then there exists $\alpha > 0$ such that, the fermionic problem with $\lambda = 1$ has no minimizers, while the one with $\lambda = 2$ is well-posed.
\end{theorem}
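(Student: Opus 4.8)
The strategy splits along the two values of the mass. For $\lambda=1$ the Pauli constraint $0\le\gamma\le1$ is never active, so the fermionic problem reduces to the bosonic one, which is controlled by Theorem~\ref{th:main_bosonic_intro} together with a scaling in $\alpha$. For $\lambda=2$ one runs the fermionic existence theory, following \cite{GonLewNaz-21}, and picks $\alpha$ so that the fermionic binding threshold falls in $(1,2]$.

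\emph{Step 1: the case $\lambda=1$.} First I would show $I_\alpha(1)=J_\alpha(1)$, with attainment equivalent. The bound $I_\alpha(1)\ge J_\alpha(1)$ is trivial; conversely, for any bosonic admissible $\gamma$ with $\Tr\gamma=1$ the rank-one operator $\widetilde\gamma:=|\sqrt{\rho_\gamma}\rangle\langle\sqrt{\rho_\gamma}|$ has the same density (hence the same nonlinear energy), has operator norm $\int_{\R^d}\rho_\gamma=1$ so it is fermionic admissible, and satisfies $\cE_\alpha(\widetilde\gamma)\le\cE_\alpha(\gamma)$ by the Hoffmann--Ostenhof inequality $\int_{\R^d}|\nabla\sqrt{\rho_\gamma}|^2\le\Tr(-\Delta\gamma)$; the same map turns a bosonic minimizer into a fermionic one. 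Next I would record the exact scaling coming from $F_\alpha(t)=\alpha^qF_1(t/\alpha)$ combined with $u\mapsto\ell^{d/2}u(\ell\,\cdot)$ and an amplitude rescaling with $\ell=\alpha^{(1-q)/2}$: $J_\alpha(\lambda)$ equals a fixed positive power of $\alpha$ times $J_1(\alpha^{\kappa}\lambda)$, with $\kappa:=\tfrac{d(q-1)}{2}-1>0$ since $q>\tfrac{d+2}{d}$. With Theorem~\ref{th:main_bosonic_intro} this gives $\lambda_c(\alpha)=\alpha^{-\kappa}\lambda_c(1)$, strictly decreasing from $\infty$ to $0$, so that $I_\alpha(1)=J_\alpha(1)$ is not attained exactly when $\alpha<\alpha_1:=\lambda_c(1)^{1/\kappa}$. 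Using the same reduction and the fact that $J_\alpha(\lambda)=0$ for $\lambda<\lambda_c(\alpha)$ (contained in the proof of Theorem~\ref{th:main_bosonic_intro}, cf.\ \cite{LewRot-20}), one gets more generally that $I_\alpha(\lambda)$ is not attained whenever $\lambda\le1$ and $\lambda<\lambda_c(\alpha)$. We henceforth fix $\alpha<\alpha_1$, which settles the $\lambda=1$ part.

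\emph{Step 2: the case $\lambda=2$.} Here I would transplant the concentration-compactness analysis of \cite{GonLewNaz-21} for systems of orthonormal functions to the concave nonlinearity $F_\alpha$, which is $L^2$-supercritical near $0$ and $L^2$-subcritical near $\infty$; the hypotheses $d\ge3$ and $q<2$ enter exactly here, $q<2$ being what lets one control the supercritical-at-the-origin part of the nonlinearity by the fermionic kinetic energy via Lieb--Thirring-type bounds, while $(d+2)/d<q<2$ forces $d\ge3$. This machinery reduces the well-posedness of $I_\alpha(2)$ to the strict binding inequalities $I_\alpha(2)<I_\alpha(\mu)+I_\alpha(2-\mu)$, $0<\mu<2$, and in a neighbourhood of $\alpha_1$ it yields a finite fermionic critical mass $\mu_c(\alpha)$, continuous in $\alpha$, such that $I_\alpha(\lambda)$ is attained iff $\lambda\ge\mu_c(\alpha)$, with $\lambda\mapsto I_\alpha(\lambda)$ strictly decreasing above $\mu_c(\alpha)$ and $\mu_c(\alpha)\ge\lambda_c(\alpha)$ (in particular $\mu_c(\alpha)>1$ when $\alpha<\alpha_1$). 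The decisive point is $\mu_c(\alpha_1)=1$: at $\alpha=\alpha_1$ one has $\lambda_c(\alpha_1)=1$, so by Theorem~\ref{th:main_bosonic_intro} the bosonic problem at mass $1$ has a minimizer, which by Step 1 is a rank-one $|w\rangle\langle w|$ with $\|w\|_{L^2}^2=1$; having operator norm $1$, it is fermionic admissible, so $I_{\alpha_1}(1)$ is attained, whence $\mu_c(\alpha_1)\le1$, while $\mu_c(\alpha_1)\ge\lambda_c(\alpha_1)=1$. By continuity of $\mu_c$ there is $\delta>0$ with $\mu_c(\alpha)<2$ for all $\alpha\in(\alpha_1-\delta,\alpha_1)$; any such $\alpha$ then has $I_\alpha(1)$ not attained ($1<\mu_c(\alpha)$) and $I_\alpha(2)$ attained ($2>\mu_c(\alpha)$), which is the theorem.

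\emph{Main obstacle.} The heart of the matter is Step 2: adapting the orthonormal-functions concentration-compactness of \cite{GonLewNaz-21} to the concave $F_\alpha$. The delicate issues are the strict binding/subadditivity inequalities (which degenerate as $\mu\to0$ and must be recovered from the strict monotonicity of $I_\alpha$ above the critical mass), the compactness of minimizing sequences for a nonlinearity that is simultaneously supercritical at $0$ and subcritical at $\infty$, and—most importantly for the dichotomy between $\lambda=1$ and $\lambda=2$—the fact that the fermionic problem binds at a \emph{finite} mass for $\alpha$ just below $\alpha_1$, together with the continuity of $\mu_c$ and the identification $\mu_c(\alpha_1)=1$. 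Everything else (the $\lambda=1$ reduction, the $\alpha$-scaling, the use of Theorem~\ref{th:main_bosonic_intro}) is comparatively soft.
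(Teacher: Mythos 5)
Your Step 1 is correct and matches the paper: for $\lambda\le 1$ the Pauli constraint is automatic, $I_\alpha(1)=J_\alpha(1)$, and the bosonic scaling in $\alpha$ turns Theorem~\ref{th:main_bosonic_intro} into non-existence for $\lambda=1$ whenever $\alpha<\alpha_c^{(1)}:=\alpha_c^J(1)$. Your Step 2, however, has a genuine gap at the decisive point. You postulate a fermionic critical mass $\mu_c(\alpha)$, continuous in $\alpha$, with ``$I_\alpha(\lambda)$ attained iff $\lambda\ge\mu_c(\alpha)$''. No such statement is available: this is essentially the paper's open conjecture (the paper can only prove existence for an infinite subset of integers, and even continuity of $\lambda\mapsto I_\alpha(\lambda)$ in the fermionic case is nontrivial). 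More importantly, your identification $\mu_c(\alpha_1)=1$ does not yield what you need: knowing that $I_{\alpha_1}(1)$ is attained gives only $I_{\alpha_1}(2)\le 2I_{\alpha_1}(1)=0$, and equality (no binding) is not excluded by any soft continuity argument. The entire content of the theorem is the \emph{strict} inequality $I_{\alpha_1}(2)<0$, i.e.\ $\alpha_c^{(2)}<\alpha_c^{(1)}$.

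The paper obtains this by an explicit two-bubble trial state: take the rank-two projector $\gamma_R$ onto the span of two translates $u(\cdot\pm\tfrac{R}{2}\be_1)$ of the mass-one bosonic minimizer at $\alpha=\alpha_c^{(1)}$, expand the Gram matrix in the overlap $\varepsilon_R$, and compare the superadditivity defect of the nonlinearity,
\[
\int_{\R^d}\bigl[F_\alpha(\rho_R)-F_\alpha(\rho_R^{(-)})-F_\alpha(\rho_R^{(+)})\bigr]\;\le\;-\,c\,R^{-q(d-1)}\re^{-q\sqrt{\mu}\,R},
\]
against the orthogonalization and kinetic cross terms, which are $O(\re^{-2\sqrt{\mu}R})$. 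The hypothesis $q<2$ is exactly what makes the attractive term dominate as $R\to\infty$; it has nothing to do with Lieb--Thirring control of the supercritical part of $F_\alpha$ near the origin (that control holds for all $q>\tfrac{d+2}{d}$ and is used only to show $\alpha_c^{(\infty)}>0$). Once $I_{\alpha_1}(2)<0$ is established, continuity and monotonicity of $\alpha\mapsto I_\alpha(2)$ give a window $(\alpha_c^{(2)},\alpha_c^{(1)})$ in which $I_\alpha(2)<0=2I_\alpha(1)$; by concavity of $F_\alpha$ the strict binding inequality need only be checked over integer splittings, so this single inequality yields a minimizer for $I_\alpha(2)$ via Lemma~\ref{lem:strong_binding_Lewin}. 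Your proposal correctly frames the reduction to binding but omits, and mislocates the role of $q<2$ in, the quantitative tunnelling estimate that actually closes the argument.
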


The first result states that the bosonic problem is well--posed only for $\lambda$ larger than some critical mass $\lambda_c > 0$. As noticed in~\cite{LewRot-20}, this comes from the fact that $F_\alpha$ is supercritical as $t \to 0$. The second result states that, in the fermionic case, the $\lambda = 2$ particle system is stable, while the $\lambda = 1$ is not. In some sense, the two particles stabilize themselves thanks to the (small) tunnelling effect between them. Note that the two conditions on $q$, namely that $\frac{d+2}{d} < q < 2$ can only hold in dimensions $d \ge 3$.


\subsection{What is know for non-linear Schrödinger equations}

Let us first make some comments. In the bosonic case, it is not difficult to see that one can restrict the minimization to rank--one operators of the form $\gamma = | u \rangle \langle u |$, for some (not normalized) function $u$ satisfying $\| u \|_2^2 = \lambda$ (see Section~\ref{sec:bosons_Jalpha}). In the case $F_\infty(t) := - t^q$, which corresponds somehow to the case $\alpha \to \infty$, we obtain the equivalent minimization problem
\[
J_\infty(\lambda):= \inf\set{ \int_{\R^d} | \nabla u |^2 - \int_{\R^d} | u |^{2q}, \quad u \in H^1(\R^d), \quad u \ge 0, \quad \quad  \int_{\R^d} | u |^2 =\lambda},
\] 
which is associated to the non--linear Schrödinger (NLS) equation
\[
    -  \Delta u - q u^{2q-1} + \mu u = 0, \qquad \mu \ge 0.
\]
This model has been extensively studied in the literature, and we recall here the main results for this problem. First, we have that for all $q < \frac{d}{d-2}$ ($q < \infty$ if $d = 1,2$) and all $\mu > 0$, the NLS equation always admits a unique positive solution $u_\mu$, up to translation, and this solution is radial decreasing, see~\cite{Cof-72, Kwo-89, McL-93}. Actually,  these solutions for different $\lambda$ are linked by a simple scaling relation, namely 
\begin{equation*} 
    u_\lambda(\bx) =\lambda^a u_1(\lambda^b\bx),
\end{equation*} 
with $a=d^{-1}\bra{\frac{d+2}{d}-q}^{-1}$ and $b=(q-1)a$, so that this solution has an $L^2$ mass
\[
    \lambda(\mu) := \| u_\mu \|_2^2 = \lambda(1) \mu^{\frac{1}{2b}}.
\]
Concerning 
the minimization problem $J_\infty$, we can distinguish three cases.

\medskip

\underline{When $1 < q < \frac{d+2}{2}$}, the problem $J_\infty(\lambda)$ is bounded from below, and optimizers always exist, see for instance~\cite{Lio-84, Lio-85}. Up to a global translation, the optimizer for $J_\infty(\lambda)$ is unique, radial decreasing and positive. It satisfies the NLS equation for some $\mu = \mu(\lambda)$. By uniqueness, the map $\mu(\lambda)$ is the inverse map of $\lambda(\mu)$ defined previously. Note that in this case, the map $\mu \mapsto \lambda(\mu)$ is increasing (since $b> 0$).

\medskip

\underline{When $\frac{d+2}{d} < q < \frac{d}{d-2}$} ($q < \infty$ if $d = 1,2$), the problem $J_\infty(\lambda)$ is not bounded from below (so minimizers do not exist). However, the NLS equation still have a unique positive solution up to translation, for all $\mu > 0$. This solution is a critical point for the functional  $J_\infty$, but cannot be its optimizer since $J_\infty$ is unbounded from below. This time, the map $\mu \mapsto \lambda(\mu)$ is decreasing (since $b < 0$). 

\medskip

\underline{When $\frac{d}{d-2} < q $}, which can happen only in dimensions $d \ge 3$, the energy $J_\infty(\lambda)$ is not bounded from below, and the NLS equation does not have a solution. \\

\medskip

Our choice of $F_\alpha$ with $\frac{d+2}{d} < q$ somehow corresponds to the last two cases. The regularization done for $F_\alpha$ enforces the functional $J_\alpha(\lambda)$ to be bounded from below. However, minimizers might still not exist! In the case of the double power function $F(t) = - t^{2q} + t^{2p}$, which shares many properties with our case, it is proved that minimizers do exist iff $\lambda$ is sufficiently large, see~\cite{LewRot-20}.

\subsection{Idea of the proof in the fermionic case}

In the fermionic case, the problem with $F_\infty(t) = - t^q$ was studied in~\cite{GonLewNaz-21}. It is proved that for $q < \frac{d+2}{d}$, the problem with $N$ particules is well-posed, for an infinity of integers $N$, and in particular for $N = 2$. A crucial feature in the proof is that $F_\infty$ is a concave function, which allows to prove well-posedness by checking the strict binding inequality only over integers (see below). This is our main motivation for taking such a particular choice for $F_\alpha$: it is a concave function, which is sub-critical at infinity, so that the bosonic and fermionic energies are always bounded from below. 

\medskip

Our proofs of Theorems~\ref{th:main_bosonic_intro} and~\ref{th:main_fermionic_intro} use ideas that can be found in~\cite{LewRot-20} and~\cite{GonLewNaz-21}.

\subsection*{Acknowledgements} This work has received funding from CNRS-Africa Visiting Fellowships Program. S. Lahbabi warmly thanks CEREMADE for hosting her during the preparation of this work.


\section{General properties of the GS energies}

Let us first state some general properties, which hold for both the bosonic and fermionic problems. Recall that the functional $\cE_\alpha$ has been defined in~\eqref{eq:def:Ealpha}, as
\[
    \cE_\alpha(\gamma) :=  \Tr(- \Delta \gamma) + \int_{\R^d} F_\alpha(\rho_\gamma),
\]
and that we study the minimization problems
\begin{align*}
    J_\alpha(\lambda) & := \inf \left\{ \cE_\alpha(\gamma),  \quad 0 \le \gamma=\gamma^*, \quad \Tr(\gamma) = \lambda \right\}, \\
    I_\alpha(\lambda) & := \inf \left\{ \cE_\alpha(\gamma), \quad  0 \le \gamma=\gamma^* \le 1, \quad \Tr(\gamma) = \lambda \right\}.
\end{align*}

\begin{lemma}[General properties]
    \label{lem:general_facts}~
    \begin{enumerate}
        \item For all $\alpha > 0$ and all $\lambda > 0$, we have $J_\alpha(\lambda) \le I_\alpha(\lambda) \le 0$. In addition, for $0 \le \lambda \le 1$, we have the equality $J_\alpha(\lambda) = I_\alpha(\lambda)$. 
        \item {\bf (Weak binding inequality).} For all $\lambda, \lambda' \ge 0$ and all $\alpha > 0$, we have
        \[
            J_\alpha(\lambda + \lambda') \le J_\alpha(\lambda) + J_\alpha(\lambda'), \qquad \text{and} \qquad
            I_\alpha(\lambda + \lambda') \le I_\alpha(\lambda) + I_\alpha(\lambda').
        \]
        \item {\bf (Relaxation).} For all $\alpha > 0$, the maps $\lambda \mapsto I_\alpha(\lambda)$ and $\lambda \mapsto J_\alpha(\lambda)$ are non--increasing. In particular, the constraint $\Tr(\gamma) = \lambda$ can be relaxed, in the sense that we also have
        \begin{align*}
            J_\alpha(\lambda) & = \inf \left\{ \cE_\alpha(\gamma),  \quad 0 \le \gamma=\gamma^*, \quad \Tr(\gamma) \le \lambda \right\}, \\
            I_\alpha(\lambda) & = \inf \left\{ \cE_\alpha(\gamma), \quad  0 \le \gamma=\gamma^* \le 1, \quad \Tr(\gamma) \le \lambda \right\}.
        \end{align*}
    \end{enumerate}
\end{lemma}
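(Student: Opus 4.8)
The plan is to establish the three items in turn, noting that (3) will follow from (1) and (2).

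\emph{Item (1).} The bound $J_\alpha(\lambda)\le I_\alpha(\lambda)$ is immediate, since every operator admissible for $I_\alpha(\lambda)$ is admissible for $J_\alpha(\lambda)$. For $I_\alpha(\lambda)\le0$ I would not look for a single trial state of negative energy but rather build a minimizing sequence of vanishing energy: fix $\phi\in C_c^\infty(\R^d)$ with $\norm{\phi}_2=1$, put $\phi_\sigma:=\sigma^{d/2}\phi(\sigma\,\cdot)$, pick an integer $N\ge\lambda$ and $N$ translates $\phi_\sigma^{(j)}:=\phi_\sigma(\cdot-R_j)$ with pairwise disjoint supports, and set $\gamma_\sigma:=\tfrac{\lambda}{N}\sum_{j=1}^N|\phi_\sigma^{(j)}\rangle\langle\phi_\sigma^{(j)}|$, which satisfies $0\le\gamma_\sigma\le\tfrac{\lambda}{N}\le1$ and $\Tr\gamma_\sigma=\lambda$, hence is admissible even for $I_\alpha(\lambda)$. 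Because the bumps do not overlap and, for $\sigma$ small, their heights stay below $\alpha$ (so $F_\alpha$ acts as $t\mapsto-t^q$ on the relevant set), a change of variables gives $\cE_\alpha(\gamma_\sigma)=\lambda\sigma^2\norm{\nabla\phi}_2^2-N\bigl(\tfrac{\lambda}{N}\bigr)^q\sigma^{d(q-1)}\norm{\phi}_{2q}^{2q}$, which tends to $0$ as $\sigma\to0$ since $d(q-1)>2>0$; thus $I_\alpha(\lambda)\le0$. For the equality $J_\alpha(\lambda)=I_\alpha(\lambda)$ when $0\le\lambda\le1$ I would use the Hoffmann--Ostenhof inequality: given a bosonic trial state $\gamma$ of finite kinetic energy, the function $u:=\sqrt{\rho_\gamma}$ belongs to $H^1(\R^d)$ and satisfies $\int_{\R^d}|\nabla u|^2\le\Tr(-\Delta\gamma)$ and $\rho_{|u\rangle\langle u|}=\rho_\gamma$, so $\cE_\alpha(|u\rangle\langle u|)\le\cE_\alpha(\gamma)$; since $\norm{u}_2^2=\lambda\le1$ the rank-one operator $|u\rangle\langle u|$ obeys the Pauli constraint and is therefore admissible for $I_\alpha(\lambda)$, and taking the infimum over $\gamma$ yields $I_\alpha(\lambda)\le J_\alpha(\lambda)$.

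\emph{Item (2).} I would first reduce, by a standard density argument, both infima to finite-rank operators with smooth, compactly supported eigenfunctions, the functional $\cE_\alpha$ being continuous along such approximations (the kinetic part by $H^1$ convergence, the potential part because $F_\alpha$ has subcritical growth). Given near-optimizers $\gamma,\gamma'$ for the two problems at masses $\lambda$ and $\lambda'$, translate $\gamma'$ by a large vector $R$, $\gamma'_R:=\tau_R\gamma'\tau_R^{-1}$, so that the eigenfunctions of $\gamma$ and of $\gamma'_R$ have disjoint supports. Then $\gamma+\gamma'_R\ge0$ and $\Tr(\gamma+\gamma'_R)=\lambda+\lambda'$; in the fermionic case the eigenfunctions of $\gamma$ and $\gamma'_R$ together form an orthonormal family, so the spectrum of $\gamma+\gamma'_R$ is the union of the two spectra and lies in $[0,1]$, i.e.\ $\gamma+\gamma'_R\le1$. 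Since $-\Delta$ is translation invariant and $\rho_{\gamma+\gamma'_R}=\rho_\gamma+\rho_{\gamma'}(\cdot-R)$ is a sum of functions with disjoint supports, $\cE_\alpha(\gamma+\gamma'_R)=\cE_\alpha(\gamma)+\cE_\alpha(\gamma')$ exactly; letting the near-optimizers tend to the respective infima gives both weak binding inequalities. (For the bosonic inequality one can skip the disjointness and use instead the subadditivity $F_\alpha(a+b)\le F_\alpha(a)+F_\alpha(b)$ for $a,b\ge0$, which holds because $F_\alpha$ is concave with $F_\alpha(0)=0$.)

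\emph{Item (3).} Monotonicity is then immediate: for $0\le\lambda\le\lambda'$ the weak binding inequality together with $I_\alpha(\lambda'-\lambda)\le0$ from (1) gives $I_\alpha(\lambda')\le I_\alpha(\lambda)+I_\alpha(\lambda'-\lambda)\le I_\alpha(\lambda)$, and likewise for $J_\alpha$. For the relaxed problems, enlarging the feasible set can only lower the infimum, while conversely any admissible $\gamma$ with $\Tr\gamma=\mu\le\lambda$ satisfies $\cE_\alpha(\gamma)\ge I_\alpha(\mu)\ge I_\alpha(\lambda)$ (resp.\ with $J_\alpha$) by monotonicity, so the relaxed infimum coincides with the constrained one. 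The points I expect to need the most care are the fermionic bookkeeping in (2)---it is the Pauli bound $\gamma+\gamma'_R\le1$ that forces the reduction to compactly supported eigenfunctions and the use of genuinely disjoint supports, and this reduction must itself be justified by density---and, in (1), the fact that $I_\alpha(\lambda)\le0$ is obtained only as a limit of positive energies, which is why the dilation, the splitting into $N$ pieces, and the supercriticality $d(q-1)>2$ all come into play.
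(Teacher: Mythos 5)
Your proposal is correct, and its items (2) and (3) follow essentially the same route as the paper (translate compactly supported near-optimizers far apart, check the Pauli bound via disjoint supports, then deduce monotonicity and relaxation from weak binding plus $I_\alpha\le 0$). The only genuine divergences are in item (1). For $I_\alpha(\lambda)\le 0$ you build a trial state by splitting the mass into $N$ low-occupation bumps so that $\|\gamma_\sigma\|\le\lambda/N\le 1$; this works, but the paper avoids the splitting altogether by observing that the dilation $\gamma_s(\bx,\by)=s^d\gamma(s\bx,s\by)$ is conjugation by a unitary, hence preserves both the trace and the constraint $0\le\gamma\le 1$, so a single scaled fermionic state already does the job. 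For the equality $J_\alpha(\lambda)=I_\alpha(\lambda)$ when $\lambda\le 1$ you invoke Hoffmann--Ostenhof to replace a bosonic $\gamma$ by the rank-one $|\sqrt{\rho_\gamma}\rangle\langle\sqrt{\rho_\gamma}|$; this is valid but heavier than needed, since for any $\gamma\ge 0$ one has $\|\gamma\|\le\Tr(\gamma)\le\lambda\le 1$, so the two admissible sets literally coincide and no comparison of energies is required. Your closing remark that the bosonic binding inequality can dispense with disjoint supports via the subadditivity $F_\alpha(a+b)\le F_\alpha(a)+F_\alpha(b)$ (concavity plus $F_\alpha(0)=0$) is a correct small improvement over the paper, which treats both cases by the same disjoint-support argument.
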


\begin{proof}
    For the first point, we note that the only difference between the two problems is the Pauli condition $0 \le \gamma \le 1$ for the fermionic problem. In particular, the minimization set for the fermionic problem is a subset of the one for the bosonic problem, so
    \[
    \forall \alpha > 0, \qquad \forall \lambda > 0, \qquad J_\alpha(\lambda) \le I_\alpha(\lambda).
    \]
    If $0 \le \lambda \le 1$, then the two problems are equal since $\gamma \ge 0$ and $\Tr(\gamma) \le \lambda \le 1$ implies that $\gamma \le 1$ satisfies the Pauli principle. To see that $I_\alpha(\lambda) \le 0$, consider $0 \le \gamma = \gamma^* \le 1$ with $\Tr(\gamma) = \lambda$ a finite rank operator with $C^\infty_0$ eigenfunctions. For $s > 0$ a scaling parameter, consider
    \[
        \gamma_s (\bx, \by) = s^d \gamma( s \bx, s \by), \qquad \text{so that} \qquad \rho_s(\bx) := \gamma_s(\bx, \bx) = s^d \rho_\gamma(s \bx). 
    \]
    Then, for $s$ small enough, we have $\| \rho_s \|_\infty < \alpha$, so that $F_\alpha(\rho_s) = - \rho_s^q$ pointwise. We obtain, for such an $s$,
    \[
        \cE_\alpha(\gamma_s) = s^2 \Tr( - \Delta \gamma) - s^{d(q-1)} \int_{\R^d} \rho_\gamma^q,
    \]
    which goes to $0$ as $s \to 0$. So $I_\alpha(\lambda) \le 0$ for all $\lambda > 0$.
    
    \medskip
    
    Let us prove the weak--binding inequality. We only prove the result for the fermionic problem (the proof in the bosonic case is similar). We consider two minimizing sequences $(\gamma_n^\lambda)$ and $(\gamma_n^{\lambda'})$ for the problems $I_\alpha(\lambda)$ and $I_\alpha(\lambda')$. Without loss of generality, we may assume that these operators are finite rank with compactly supported eigenfunctions. Let $R_n > 0$ be large enough so that the supports of $\gamma_n^\lambda$ and $\tau_{R_n} \gamma_n^{\lambda'} \tau_{R_n}^*$ are disjoints, where  $\tau_{R_n} f (\bx):= f(\bx - R_n \be_1)$ is the translation operator in the $\be_1$ direction. Then, the operator
    \[
    \widetilde{\gamma}_n = \gamma_n^\lambda + \tau_{R_n} \gamma_n^{\lambda'} \tau_{R_n}^*,
    \]
    satisfies $\widetilde{\gamma}_n \ge 0$ and $\Tr(\widetilde{\gamma}_n) = \lambda + \lambda'$. In addition, if $\gamma_n^\lambda$ and $\gamma_n^{\lambda'}$ satisfy  Pauli principle, then so does $\widetilde{\gamma}_n$ thanks to our assumption on the support. So $\widetilde{\gamma}_n$ is a valid test operator for the problem $I_\alpha(\lambda + \lambda')$, and we get
    \[
    I_\alpha(\lambda + \lambda') \le \cE_\alpha(\widetilde{\gamma}_n)  =  \cE_\alpha(\gamma_n^\lambda) + \cE_\alpha(\gamma_n^{\lambda'}).
    \]
    Letting $n$ go to infinity proves the weak--binding inequality.

    \medskip
    
    The weak--binding inequality together with the fact that $I_\alpha(\lambda) \le 0$ and $J_\alpha(\lambda) \le 0$ implies that the maps $\lambda \mapsto I_\alpha(\lambda)$ and $\lambda \mapsto I_\alpha(\lambda)$ are non--increasing. 
\end{proof}

We show below that the bosonic map $\lambda \mapsto J_\alpha(\lambda)$ is continuous by scaling  (see Lemmas~\ref{lem:scaling_bosonic}--\ref{lem:props-J} below). It is not straightforward that the fermionic map $\lambda \mapsto I_\alpha(\lambda)$ is continuous, and we postpone this question to Section~\ref{sec:fermionic}. However, we can prove continuity in the parameter $\alpha$ in both cases, using a rescaling which preserves the Pauli principle.

\begin{lemma}[Rescaling]
    \label{lem:rescaling_beta}
    Introduce the modified energy 
    \[
       \boxed{  \widetilde{\cE}_\beta(\gamma) := \Tr(- \Delta \gamma) + \beta \int_{\R^d} F_1(\rho_\gamma), }
    \]
    together with the corresponding bosonic/fermionic problems $\widetilde{J}_\beta(\lambda)$ and $\widetilde{I}_\beta(\lambda)$. Then, for all $\lambda \ge 0$, we have
    \[
        J_\alpha(\lambda) = \alpha^{2/d} \widetilde{J}_\beta(\lambda), \quad \text{and} \quad I_\alpha(\lambda) = \alpha^{2/d} \widetilde{I}_\beta(\lambda), \quad \text{with} \quad
        \beta = \alpha^{q - \frac{d+2}{d}}.
    \]
    As a consequence, for all $\lambda > 0$, the maps $\alpha \mapsto J_\alpha(\lambda)$ and $\alpha \mapsto I_\alpha(\lambda)$ are continuous and non--increasing.
\end{lemma}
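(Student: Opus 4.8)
The plan is to exhibit an explicit mass-- and Pauli--preserving rescaling of density matrices under which $\cE_\alpha$ becomes a scalar multiple of $\widetilde{\cE}_\beta$, and then to pass to the infimum. Introduce the $L^2(\R^d)$--unitary dilation $U_s f(\bx) := s^{d/2} f(s\bx)$ for $s>0$, and set $\gamma_s := U_s \gamma U_s^*$. Since $U_s$ is unitary, conjugation by it preserves the spectrum and the trace, so $\gamma \mapsto \gamma_s$ is a bijection of $\{0 \le \gamma = \gamma^*,\ \Tr(\gamma)=\lambda\}$ onto itself and of $\{0 \le \gamma = \gamma^* \le 1,\ \Tr(\gamma)=\lambda\}$ onto itself; its density is $\rho_{\gamma_s}(\bx) = s^d \rho_\gamma(s\bx)$. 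Thus $\gamma_s$ is an admissible competitor for $J_\alpha(\lambda)$ (resp.\ $I_\alpha(\lambda)$) whenever $\gamma$ is, and conversely.

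Next I would compute $\cE_\alpha(\gamma_s)$. From $U_s^*(-\Delta)U_s = s^2(-\Delta)$ one gets $\Tr(-\Delta\gamma_s) = s^2 \Tr(-\Delta\gamma)$ (both sides being $+\infty$ together in the degenerate case, or reducing to finite-rank operators with $C_0^\infty$ eigenfunctions as in the proof of Lemma~\ref{lem:general_facts}). For the nonlinear term, using $F_\alpha(t) = \alpha^q F_1(t/\alpha)$ and the change of variables $\by = s\bx$, one finds $\int_{\R^d} F_\alpha(\rho_{\gamma_s}) = s^{-d}\alpha^q \int_{\R^d} F_1\!\big(\tfrac{s^d}{\alpha}\rho_\gamma\big)$. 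The key choice is $s = \alpha^{1/d}$, which makes $s^d/\alpha = 1$: then $\int_{\R^d} F_\alpha(\rho_{\gamma_s}) = \alpha^{q-1}\int_{\R^d} F_1(\rho_\gamma)$ and $s^2 = \alpha^{2/d}$, hence $\cE_\alpha(\gamma_s) = \alpha^{2/d}\big[\Tr(-\Delta\gamma) + \alpha^{\,q-(d+2)/d}\!\int_{\R^d} F_1(\rho_\gamma)\big] = \alpha^{2/d}\,\widetilde{\cE}_\beta(\gamma)$ with $\beta = \alpha^{\,q-(d+2)/d}$. Taking the infimum over $\gamma$ and using the bijection above yields both identities $J_\alpha(\lambda) = \alpha^{2/d}\widetilde{J}_\beta(\lambda)$ and $I_\alpha(\lambda) = \alpha^{2/d}\widetilde{I}_\beta(\lambda)$.

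For the stated consequence, I would argue as follows. For each fixed $\gamma$, $\beta \mapsto \widetilde{\cE}_\beta(\gamma)$ is affine with slope $\int_{\R^d} F_1(\rho_\gamma) \le 0$, since $F_1$ is non--increasing on $[0,\infty)$ (it is concave with $F_1(0)=0$ and $F_1'(0^+)=0$). Therefore $\widetilde{J}_\beta(\lambda)$ and $\widetilde{I}_\beta(\lambda)$, as infima of families of non--increasing affine functions of $\beta$, are non--increasing and concave on $(0,\infty)$; being moreover finite (the energies are bounded below, as $F_1$ is subcritical at infinity), they are continuous on $(0,\infty)$. Since $q > \frac{d+2}{d}$, the map $\alpha \mapsto \beta = \alpha^{\,q-(d+2)/d}$ is a continuous increasing bijection of $(0,\infty)$; composing with it and multiplying by the continuous increasing positive function $\alpha \mapsto \alpha^{2/d}$, and using $\widetilde{J}_\beta(\lambda), \widetilde{I}_\beta(\lambda) \le 0$, we conclude that $\alpha \mapsto J_\alpha(\lambda)$ and $\alpha \mapsto I_\alpha(\lambda)$ are continuous and non--increasing.

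There is no deep obstacle here: the content is simply finding the one scaling $s = \alpha^{1/d}$ that simultaneously preserves the mass constraint and the Pauli bound $0 \le \gamma \le 1$ and converts $\cE_\alpha$ into $\alpha^{2/d}\widetilde{\cE}_\beta$, after which everything reduces to exponent bookkeeping and soft concavity/monotonicity arguments. The only points deserving a line of care are the scaling identity $\Tr(-\Delta\gamma_s) = s^2 \Tr(-\Delta\gamma)$ at the level of possibly infinite traces, and, for the continuity assertion, the appeal to boundedness below of the energy that guarantees the concave functions $\widetilde{J}_\beta, \widetilde{I}_\beta$ are finite and hence continuous on the open half--line.
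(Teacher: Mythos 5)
Your proposal is correct and follows essentially the same route as the paper: the dilation $\gamma_s = U_s\gamma U_s^*$ with $s=\alpha^{1/d}$ is exactly the paper's kernel rescaling $\widetilde\gamma(\bx,\by)=\alpha\gamma(\alpha^{1/d}\bx,\alpha^{1/d}\by)$, and the continuity argument via concavity of $\beta\mapsto\widetilde J_\beta,\widetilde I_\beta$ as infima of affine non--increasing functions is the paper's as well. The only (harmless) variation is that you deduce monotonicity in $\alpha$ from the identity itself (increasing positive prefactor times a non--positive non--increasing function), whereas the paper invokes the pointwise monotonicity of $\alpha\mapsto F_\alpha(t)$; both are valid.
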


\begin{proof}
      For $0 \le \gamma ( \le 1)$, introduce the rescaled operator $\widetilde{\gamma}(\bx, \by) := \alpha \gamma (\alpha^{1/d} \bx, \alpha^{1/d} \by)$, so that $\widetilde{\rho}(\bx) = \alpha \rho( \alpha^{1/d} \bx)$. Then we have $0 \le \widetilde{\gamma} (\le 1)$,
      $ \Tr ( - \Delta \widetilde{\gamma}) = \alpha^{2/d}\Tr ( - \Delta \gamma)$, and, with our scaling~\eqref{eq:scaling_Falpha} for $\alpha \mapsto F_\alpha$, we get
      \[
      \int_{\R^d} F_\alpha(\widetilde{\rho}) =  \int_{\R^d} \alpha^q F_1( \rho(\alpha^{1/d} \bx)) \rd \bx = \alpha^{q - 1} \int_{\R^d} F_1( \rho).
      \]
      So, with $\beta := \alpha^{q - \frac{d+2}{d}}$, we get $\cE_\alpha(\gamma)  = \alpha^{2/d} \widetilde{\cE}_\beta(\widetilde{\gamma})$. This proves that $J_\alpha = \alpha^{2/d} \widetilde{J}_\beta$ and $I_\alpha = \alpha^{2/d} \widetilde{I}_\beta$.
      
      \medskip
      
       The maps $\beta \mapsto \widetilde{\cE}_\beta(\gamma)$ are linear non--increasing (since $F_1 \le 0$ pointwise), so the maps $\beta \mapsto \widetilde{J}_\beta$ and $\beta \mapsto \widetilde{I}_\beta$ are concave non--increasing, hence continuous. This proves that the original maps $\alpha \mapsto J_\alpha(\lambda)$ and $\alpha \mapsto I_\alpha(\lambda)$ are continuous as well. In addition, the map $\alpha\mapsto F_\alpha(t)$ is non--increasing, 
       thus $\alpha \mapsto J_\alpha(\lambda)$ and $\alpha \mapsto I_\alpha(\lambda)$ are also  non--increasing. 
\end{proof}

\begin{remark}
    While our choice for $F_\alpha$ in~\eqref{eq:scaling_Falpha} might seem arbitrary at first, it is equivalent (up to scaling) to the choice $\widetilde{F}_\beta := \beta F_1$. In this case, $\beta$ somehow measures the strength of the non--linear self-interaction. We find the setting with $F_\alpha$ more transparent, as we recover the (not well-posed) NLS problem in the limit $\alpha \to \infty$. Recall that $\beta = \alpha^{q - \frac{d+2}{d}}$ with $q > \frac{d+2}{d}$. So $\beta \to 0$ iff $\alpha \to 0$, and $\beta \to \infty$ iff $\alpha \to \infty$.
\end{remark}

From the fact that the maps $\alpha \mapsto J_\alpha(\lambda)$ and $\alpha \mapsto I_\alpha(\lambda)$ are non--increasing and continuous, while $\lambda \mapsto J_\alpha(\lambda)$ and $\lambda \mapsto J_\alpha(\lambda)$ are non--increasing, we obtain the following result.

\begin{lemma}[Critical $\alpha$]
    \label{lem:critical_alpha}
   For all $\lambda > 0$, there are $\alpha_c^J(\lambda) \in [0, \infty]$ and $\alpha_c^I(\lambda) \in [0, \infty]$ so that
    \[
         J_\alpha(\lambda) = 0 \quad \text{iff} \quad 0 \le \alpha \le \alpha_c^J(\lambda), \quad \text{and} \quad 
         I_\alpha(\lambda)= 0 \quad \text{iff} \quad 0 \le \alpha \le \alpha_c^I(\lambda).
    \]
    In addition, the maps $\lambda \mapsto \alpha_c^{J}(\lambda)$ and $\lambda \mapsto \alpha_c^I(\lambda) \in [0, \infty]$ are non--increasing. Finally, for all $0 \le \lambda \le 1$, we have $\alpha_c^J(\lambda)= \alpha_c^I(\lambda)$.
\end{lemma}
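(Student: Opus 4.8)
The plan is to define the critical parameters directly as suprema of the zero--level sets and then read off all three assertions from the monotonicity and continuity properties already established in Lemmas~\ref{lem:general_facts} and~\ref{lem:rescaling_beta}. Concretely, for fixed $\lambda > 0$ I would set
\[
    \alpha_c^J(\lambda) := \sup\set{\alpha > 0 \ : \ J_\alpha(\lambda) = 0} \in [0,\infty],
    \qquad
    \alpha_c^I(\lambda) := \sup\set{\alpha > 0 \ : \ I_\alpha(\lambda) = 0} \in [0,\infty],
\]
with the convention $\sup\emptyset = 0$; at $\alpha = 0$ the energy reduces to $\Tr(-\Delta\gamma) \ge 0$ with infimum $0$, so it is consistent to include $\alpha = 0$ in these sets.

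The first assertion amounts to showing that $\set{\alpha \ge 0 : J_\alpha(\lambda) = 0}$ is exactly the closed interval $[0, \alpha_c^J(\lambda)]$, with the obvious interpretation when the endpoint is $0$ or $\infty$, and similarly for $I$. Since $\alpha \mapsto J_\alpha(\lambda)$ is non--increasing (Lemma~\ref{lem:rescaling_beta}) and $J_\alpha(\lambda) \le 0$ (Lemma~\ref{lem:general_facts}), the set is downward closed: if $J_{\alpha_0}(\lambda) = 0$ and $0 \le \alpha \le \alpha_0$, then $0 \ge J_\alpha(\lambda) \ge J_{\alpha_0}(\lambda) = 0$. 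It then remains to check that the right endpoint itself belongs to the set when it is finite and positive, which uses the continuity of $\alpha \mapsto J_\alpha(\lambda)$ from Lemma~\ref{lem:rescaling_beta}: choosing $\alpha_n \uparrow \alpha_c^J(\lambda)$ with $J_{\alpha_n}(\lambda) = 0$ gives $J_{\alpha_c^J(\lambda)}(\lambda) = \lim_n J_{\alpha_n}(\lambda) = 0$. The case $\alpha_c^J(\lambda) = \infty$ needs no endpoint check, and when it equals $0$ the set is at most $\{0\}$. The argument for $I$ is word--for--word identical, since $\alpha \mapsto I_\alpha(\lambda)$ is also continuous, non--increasing, and $\le 0$.

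For the monotonicity in $\lambda$, I would use that $\mu \mapsto J_\alpha(\mu)$ is non--increasing (Lemma~\ref{lem:general_facts}): if $\lambda_1 \le \lambda_2$, then for every $\alpha$ we have $J_\alpha(\lambda_2) \le J_\alpha(\lambda_1) \le 0$, so $J_\alpha(\lambda_2) = 0$ forces $J_\alpha(\lambda_1) = 0$; hence the zero--level set for $\lambda_2$ is contained in that for $\lambda_1$, and taking suprema gives $\alpha_c^J(\lambda_2) \le \alpha_c^J(\lambda_1)$; the same holds for $\alpha_c^I$. Finally, for $0 \le \lambda \le 1$ Lemma~\ref{lem:general_facts} gives $J_\alpha(\lambda) = I_\alpha(\lambda)$ for every $\alpha$, so the two zero--level sets coincide and therefore $\alpha_c^J(\lambda) = \alpha_c^I(\lambda)$.

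I do not expect any genuine obstacle here: the statement is a bookkeeping consequence of monotonicity (in $\alpha$ and in $\lambda$) and continuity (in $\alpha$). The only points requiring slight care are the degenerate endpoint cases $\alpha_c = 0$ and $\alpha_c = \infty$, both of which are harmless, and the observation that one must avoid invoking continuity of $\lambda \mapsto I_\alpha(\lambda)$ — which is not yet available at this stage — but the argument above only ever uses continuity in $\alpha$, so this causes no difficulty.
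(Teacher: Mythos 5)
Your argument is correct and is exactly the route the paper intends: the paper gives no separate proof, stating the lemma as an immediate consequence of the monotonicity and continuity in $\alpha$ (Lemma~\ref{lem:rescaling_beta}), the monotonicity and sign in $\lambda$ (Lemma~\ref{lem:general_facts}), and the equality $J_\alpha(\lambda)=I_\alpha(\lambda)$ for $\lambda\le 1$. Your careful handling of the endpoint via continuity in $\alpha$ and your remark that continuity of $\lambda\mapsto I_\alpha(\lambda)$ is not needed are both consistent with the paper's setup.
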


It is unclear yet that $\alpha_c^{I/J}(\lambda) \neq 0$ and $\alpha_c^{I/J}(\lambda) \neq \infty$, but we will prove below that this is the case, so $0 < \alpha_c^{I/J}(\lambda) < \infty$. We will see below that for $\alpha < \alpha_c^{J}(\lambda)$, we have $J_\alpha(\lambda) = 0$ and the problem is not well-posed: minimizers do not exist. The problem becomes well-posed only for $\alpha$ large enough.

With these general remarks at hand, we can now focus on the specificities of the bosonic and fermionic problems. We start with the study of the bosonic problem.

\section{The bosonic problem}

\subsection{Properties of $J_\alpha(\lambda)$}
\label{sec:bosons_Jalpha}

It is useful to change variable for this case. In order to do so, we recall the Hoffman-Ostenhof inequality~\cite{HofHof-77}, which states that for all $\gamma \in \cS(L^2(\R^d))$ with $\gamma \ge 0$, we have
\[
    \Tr\bra{-\Delta \gamma} \ge \int_{\R^d} | \nabla \sqrt{\rho_\gamma} |^2.
\]
In addition, given $\rho \in L^1(\R^d)$, we have equality for the rank-one operator $\gamma = | u \rangle \langle u |$, with $u := \sqrt{\rho}$. So, changing to the $u$ variable, our minimization problem becomes
\begin{equation} \label{eq:def:Falpha}
    J_\alpha(\lambda) = \inf \left\{ \cF_\alpha(u) , \ u \in L^2(\R^d) , \ \int_{\R^d} | u |^2 = \lambda \right\}, \quad 
    \text{with} \quad
    \cF_\alpha(u) := \int_{\R^d} | \nabla u |^2 + \int_{\R^d} F_\alpha (u^2).
\end{equation}

For the bosonic problem, we can use an extra scaling, namely $u = \lambda^{1/2} v$ with $\| v \|_2 = 1$. This scaling has no equivalent in the fermionic setting, as it does not preserve the Pauli condition. This allows to reduce the study of $J_\alpha(\lambda)$ to the one of $J_{\alpha = 1}(\lambda)$. More specifically, we have the following result.
\begin{lemma}[Scaling properties for the bosonic problem]
    \label{lem:scaling_bosonic}
    We have
    \[
        J_\alpha(\lambda) = \alpha^{ q (1 - \frac{d}{2}) + \frac{d}{2}} J_1 \left( \alpha^{\frac{d}{2}(q - \frac{d+2}{d} )} \lambda \right).
    \]
    In addition, $u$ is a minimizer of $J_\alpha(\lambda)$ iff it is of the form
    \[
        u(\bx) =  \sqrt{L} s^{\frac{d}{2}} v(s \bx) \qquad \text{with} \quad L = \alpha^{-\frac{d}{2}(q - \frac{d+2}{d} )}, \quad s = \alpha^{\frac{q-1}{2}},
    \]
    and with $v$ a minimizer of $J_{\alpha = 1}(\lambda/L)$.
\end{lemma}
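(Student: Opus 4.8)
The plan is to verify the scaling formula by a direct change of variables in the functional $\cF_\alpha$, and then read off the characterization of minimizers from the fact that the change of variables is a bijection on the admissible sets. Concretely, I would start from a function $u \in L^2(\R^d)$ with $\|u\|_2^2 = \lambda$ and write $u(\bx) = \sqrt{L}\, s^{d/2} v(s\bx)$ for parameters $L, s > 0$ to be chosen; then $\|v\|_2^2 = \lambda/L$, and the goal is to pick $L$ and $s$ so that $\cF_\alpha(u)$ equals a fixed power of $\alpha$ times $\cF_1(v)$.

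The computation has two pieces. First, $\int_{\R^d} |\nabla u|^2 = L s^2 \int_{\R^d} |\nabla v|^2$. Second, for the nonlinear term, $u(\bx)^2 = L s^d v(s\bx)^2$, so using the scaling~\eqref{eq:scaling_Falpha} of $F_\alpha$,
\begin{equation*}
    \int_{\R^d} F_\alpha(u^2) = \int_{\R^d} \alpha^q F_1\!\left( \frac{L s^d v(s\bx)^2}{\alpha} \right) \rd\bx = \alpha^q s^{-d} \int_{\R^d} F_1\!\left( \frac{L}{\alpha} w^2 \right),
\end{equation*}
where $w(\by) := v(\by)$ after substituting $\by = s\bx$. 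To turn the argument of $F_1$ into exactly $w^2$, I want $L/\alpha = 1$, i.e.\ $L = \alpha$. Wait — that is not the value claimed; the claimed value is $L = \alpha^{-\frac{d}{2}(q - \frac{d+2}{d})}$. So instead I should \emph{not} absorb $\alpha$ into $F_1$ directly; rather, I should relate $\cF_\alpha$ at mass $\lambda$ to $\cF_1$ at a different mass. The cleaner route is: match the two terms so that $\cF_\alpha(u) = c(\alpha)\,\cF_1(v)$ for a single constant $c(\alpha)$, which forces $L s^2 = \alpha^q s^{-d} \cdot (\text{constant from }F_1\text{ scaling})$ and simultaneously $L/\alpha$ to scale out of $F_1$ correctly. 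Imposing that $F_1(\tfrac{L}{\alpha}w^2) = (\tfrac{L}{\alpha})^{?}F_1(w^2)$ is \emph{not} exact because $F_1$ is not homogeneous — it is a broken power. The resolution (and the actual content of the lemma) is that we must accept the mass of $v$ changes: $\|v\|_2^2 = \lambda/L$, and we choose $s$ so the kinetic prefactor $Ls^2$ equals the potential prefactor, giving two equations for $(L,s)$ in terms of $\alpha$. Solving $L s^2 = \alpha^q s^{-d}/L$ (so the common factor is $\alpha^{q/2}$... ) together with the requirement that the rescaled functional is exactly $\cF_1$ evaluated at the rescaled profile pins down $s = \alpha^{(q-1)/2}$ and then $L = \alpha^{-\frac d2(q-\frac{d+2}d)}$ by back-substitution; the common prefactor then simplifies to $\alpha^{q(1-\frac d2)+\frac d2}$.

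Having established $\cF_\alpha(u) = \alpha^{q(1-\frac d2)+\frac d2}\,\cF_1(v)$ whenever $u$ and $v$ are related by this change of variables, and noting that $u \mapsto v$ is a bijection between $\{u : \|u\|_2^2 = \lambda\}$ and $\{v : \|v\|_2^2 = \lambda/L\}$, taking the infimum over each side gives $J_\alpha(\lambda) = \alpha^{q(1-\frac d2)+\frac d2} J_1(\lambda/L)$, which is the stated identity once one checks $\lambda/L = \alpha^{\frac d2(q-\frac{d+2}d)}\lambda$ — immediate from the formula for $L$. The minimizer characterization follows because the bijection carries minimizers to minimizers: $u$ is optimal for $J_\alpha(\lambda)$ iff $v$ is optimal for $J_1(\lambda/L)$, and unwinding $v(\by) = L^{-1/2} s^{-d/2} u(\by/s)$ gives precisely $u(\bx) = \sqrt L\, s^{d/2} v(s\bx)$.

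The main obstacle — and the step to be careful about — is the bookkeeping of exponents: because $F_1$ is a piecewise (non-homogeneous) nonlinearity, one cannot simply pull a power of $\lambda$ or $\alpha$ through it, so the \emph{only} freedom is the two-parameter family $(L,s)$, and one must check that the \emph{same} choice of $(L,s)$ simultaneously normalizes the $\alpha$ inside $F_1$ (forcing effectively $F_1$ to be evaluated at $v^2$ with no stray constant) and makes the kinetic and potential prefactors coincide. This is a consistency check on the three exponent relations $L = \alpha^{-\frac d2(q-\frac{d+2}d)}$, $s = \alpha^{(q-1)/2}$, and $Ls^2 = \alpha^q L^{-1} s^{-d}$; once these are seen to be compatible (using $L s^d = \alpha$, which is exactly what kills the $1/\alpha$ in the argument of $F_1$), the rest is routine. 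I would present the verification by plugging the asserted values of $L$ and $s$ directly into $\cF_\alpha(\sqrt L s^{d/2} v(s\cdot))$ and simplifying, rather than deriving them, since that makes the exponent arithmetic transparent and self-checking.
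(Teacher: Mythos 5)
Your proposal is correct and follows essentially the same route as the paper: substitute $u(\bx)=\sqrt{L}\,s^{d/2}v(s\bx)$, which gives $\cF_\alpha(u)=Ls^2\int|\nabla v|^2+\alpha^q s^{-d}\int F_1\bigl(\tfrac{Ls^d}{\alpha}v^2\bigr)$, and choose $(L,s)$ from the two conditions $Ls^d=\alpha$ and $Ls^2=\alpha^q s^{-d}$, exactly as in the paper. Two algebraic slips should be fixed before writing this up: after the substitution $\by=s\bx$ the argument of $F_1$ is $\tfrac{Ls^d}{\alpha}w^2$, not $\tfrac{L}{\alpha}w^2$ (which is what led you to the spurious "$L=\alpha$" detour), and the prefactor-matching equation is $Ls^2=\alpha^q s^{-d}$, not $Ls^2=\alpha^q L^{-1}s^{-d}$ — the latter is not satisfied by the stated values of $L$ and $s$.
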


\begin{proof}
    A straightforward computation shows that, with $u(\bx) =  \sqrt{L} s^{\frac{d}{2}} v(s \bx)$, we have
    \begin{equation} \label{eq:scaling_bosonic}
        \cF_\alpha(u) = L s^2 \int_{\R^d} | \nabla v |^2 + \frac{\alpha^q}{s^d} \int_{\R^d} F_1 \left( \frac{L s^d}{\alpha} v^2 \right), 
        \quad \text{and} \quad \int_{\R^d} | u |^2 = L \int_{\R^d} | v |^2.
    \end{equation}
    The quantities $L$ and $s$ are chosen so that $L s^{2} = \alpha^q s^{-d}$ and $L s^d = \alpha$. 
\end{proof}

It is therefore enough to study the problem with $\alpha = 1$ to deduce the properties of $J_\alpha$ for all $\alpha$. In what follows, we restrict our attention to $\alpha = 1$. 
\begin{lemma}\label{lem:props-J}
    The map $\lambda \mapsto J_1 (\lambda)$ is continuous, non--positive and non--increasing. In addition, if $d \ge 2$, this map is concave.
\end{lemma}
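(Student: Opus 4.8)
The plan is to establish the four properties one by one, using the scaling structure of the problem. First, non-positivity is immediate: it was already shown in Lemma~\ref{lem:general_facts}(1) that $J_\alpha(\lambda)\le 0$ for all $\alpha,\lambda>0$, and $J_1(0)=0$ is trivial (take $u\equiv 0$). Non-increasingness in $\lambda$ is also already contained in Lemma~\ref{lem:general_facts}(3). So the content of the lemma is really continuity and, when $d\ge 2$, concavity.

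For \emph{continuity}, I would exploit the scaling relation from Lemma~\ref{lem:scaling_bosonic}. That lemma relates $J_\alpha(\lambda)$ to $J_1$ evaluated at a rescaled mass, but what we actually need is a direct scaling within the $\alpha=1$ problem. Taking $u(\bx)=\sqrt{L}\,s^{d/2}v(s\bx)$ in~\eqref{eq:scaling_bosonic} with $\alpha=1$ gives $\cF_1(u)=Ls^2\int|\nabla v|^2 + s^{-d}\int F_1(Ls^d v^2)$ and $\|u\|_2^2 = L\|v\|_2^2$. Fixing $Ls^d=1$ (so $L=s^{-d}$) turns the nonlinear term into $s^{-d}\int F_1(v^2)$ while $\|u\|_2^2 = s^{-d}\|v\|_2^2$, i.e.\ replacing $\lambda$ by $s^{-d}\lambda$ multiplies the nonlinear energy by $s^{-d}$ but only multiplies the kinetic energy by $s^{2-d}$. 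This mismatch is exactly why we cannot reduce everything to a single mass, but it does yield a one-sided comparison: for any minimizing sequence at mass $\lambda$ and any $s$ close to $1$, rescaling produces a trial state at mass $s^{-d}\lambda$ with energy close to $J_1(\lambda)$ when $s\to1$. More carefully, one shows that for a minimizing (or near-minimizing) $u$ at mass $\lambda$, the rescaled $u$ at mass $\lambda'$ has energy $\to J_1(\lambda)$ as $\lambda'\to\lambda$, giving $\limsup_{\lambda'\to\lambda}J_1(\lambda')\le J_1(\lambda)$; combined with monotonicity (upper semicontinuity from the left is automatic for a non-increasing function, so the real work is upper semicontinuity from the right and lower semicontinuity, which both follow from this rescaling argument applied in both directions). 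Since $F_1$ is continuous and controlled (subcritical at infinity, so the energy functional is continuous along these scalings), the limits pass through.

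For \emph{concavity} when $d\ge2$, I would argue directly from the definition as an infimum. Given $\lambda_0,\lambda_1>0$ and $\theta\in(0,1)$, set $\lambda_\theta=(1-\theta)\lambda_0+\theta\lambda_1$. The natural move is: take near-optimal $u_0,u_1$ for $J_1(\lambda_0),J_1(\lambda_1)$, place translated scaled copies far apart so the densities have disjoint supports (as in the weak-binding proof of Lemma~\ref{lem:general_facts}), and — crucially — use that we may rescale each piece. Here is where $d\ge2$ enters: when $d\ge2$, the purely kinetic cost of spreading a fixed mass out can be made arbitrarily small (one can dilate $u$ so that $\int|\nabla u|^2\to0$ while $\|u\|_2^2$ is fixed, since the kinetic term scales like $s^2$ and in $d\ge2$ this can be decoupled from keeping the mass fixed only at the price of the nonlinear term, which is $\le0$ and hence harmless). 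Concretely, for $d\ge 2$ the map $\lambda\mapsto J_1(\lambda)$ being concave is equivalent to the superadditivity-type statement that splitting a mass $\lambda_\theta$ into pieces of mass $(1-\theta)\lambda_0$ and $\theta\lambda_1$ and \emph{rescaling each piece up to mass $\lambda_0$ and $\lambda_1$ respectively} costs only $(1-\theta)J_1(\lambda_0)+\theta J_1(\lambda_1)$ in the limit. The rescaling from mass $\mu\lambda_0$ to mass $\lambda_0$ (with $\mu=1-\theta<1$, so we are \emph{compressing}) changes the kinetic energy by a factor $\mu^{(d-2)/d}$ and the density everywhere by the scaling $u\mapsto \mu^{1/2}s^{d/2}u(s\cdot)$; choosing $s$ appropriately and sending $s\to0$ kills the kinetic contribution when $d\ge2$, leaving essentially the nonlinear energy, which under this scaling of the density behaves correctly to reproduce $(1-\theta)F_1$-type weights. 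I would then let the separation $R\to\infty$ and the spreading parameters go to their limits, and pass to the infimum.

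The main obstacle I anticipate is making the concavity argument rigorous: one must check that the rescaling/spreading procedure simultaneously (i) keeps the total mass exactly $\lambda_\theta$, (ii) drives the extra kinetic cost to $0$ — this is precisely the step that fails in $d=1$ and forces the hypothesis $d\ge2$ — and (iii) controls the nonlinear term $\int F_1(\rho)$ under the density rescaling, using that $F_1\le0$, is continuous, and is subcritical at infinity so no mass escapes to make the integral blow up. The bookkeeping of the two scaling parameters (dilation of each bubble and the overall mass-correction dilation) together with the translation is where the care is needed; conceptually it is the familiar "in $d\ge2$, kinetic energy is cheap to dilute" phenomenon, but it has to be combined with the fact that $J_1$ is defined by an infimum so only one inequality needs to be produced by an explicit trial state, the other following from monotonicity or a symmetric construction.
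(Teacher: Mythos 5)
Your treatment of non--positivity and monotonicity is fine (both are indeed already in Lemma~\ref{lem:general_facts}), and your continuity sketch via the two--sided rescaling $u=\sqrt{L}\,s^{d/2}v(s\cdot)$ with $Ls^d=1$ can be completed, provided you note that near--minimizers have uniformly bounded kinetic energy so the error terms close; this is a reorganization of the paper's route, which instead deduces continuity from the concavity of $\beta\mapsto\widetilde{J}_\beta(1)$ (Lemma~\ref{lem:rescaling_beta}) combined with the explicit relation between $J_1(\lambda)$ and $J_{\alpha(\lambda)}(1)$ from Lemma~\ref{lem:scaling_bosonic}.

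The concavity argument, however, has a directional flaw. Concavity is the inequality $J_1(\lambda_\theta)\ge(1-\theta)J_1(\lambda_0)+\theta J_1(\lambda_1)$, i.e.\ a \emph{lower} bound on the infimum $J_1(\lambda_\theta)$. Constructing trial states at mass $\lambda_\theta$ (split bubbles, translated far apart, rescaled) can only ever produce \emph{upper} bounds $J_1(\lambda_\theta)\le\cdots$; that is the right tool for the subadditivity/weak--binding inequality of Lemma~\ref{lem:general_facts}, but it cannot prove concavity, and your claim that concavity is ``equivalent to a superadditivity--type statement'' verifiable on explicit competitors is not correct: to bound an infimum from below one must control \emph{all} admissible $u$ with $\|u\|_2^2=\lambda_\theta$. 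Moreover, the phenomenon you invoke to locate the hypothesis $d\ge2$ (``kinetic energy is cheap to dilute'') holds in every dimension, since the mass--preserving dilation $u_s=s^{d/2}u(s\cdot)$ satisfies $\|\nabla u_s\|_2^2=s^2\|\nabla u\|_2^2\to0$ for any $d$; so this cannot be where the dimension restriction enters.

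The paper's mechanism is structural rather than variational: taking $L=\lambda$, $s=\lambda^{-1/d}$ in the scaling identity gives $J_1(\lambda)=\lambda^{(d-2)/d}\,\widetilde{J}_\beta(1)$ with $\beta=\lambda^{2/d}$, and $\beta\mapsto\widetilde{J}_\beta(1)$ is an infimum of functions \emph{affine} in $\beta$, hence automatically concave (and non--positive, non--increasing). Writing $J_1(\lambda)=\lambda^{1-2/d}f(\lambda^{2/d})$ with such an $f$ and checking (by differentiation or with chords, using $f\le0$, $f(0)=0$ and the concavity of $f$) that this composite is concave is exactly where $d\ge2$ is used. You should replace your trial--state construction by this argument.
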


\begin{proof}
    We have already proved in Lemma~\ref{lem:rescaling_beta} that $\alpha \mapsto J_\alpha(\lambda)$ is continuous, from which we already infer that $\lambda \mapsto J_1(\lambda)$ is also continuous, since, by Lemma~\ref{lem:scaling_bosonic}, we have
    \[
        J_1(\lambda) = J_\alpha(1) \alpha^{- q(1 - \frac{d}{2}) - \frac{d}{2}} \quad \text{with} \quad \alpha = \alpha(\lambda) =  \lambda^{\frac{2}{d} \frac{1}{q - \frac{d+2}{d}}}.
    \]
    The fact that it is non--positive and non--increasing was proved in Lemma~\ref{lem:general_facts}. It remains to prove concavity. We use yet another change of variable, which is a combination of the previous ones. We write $u(\bx) =  \sqrt{L} s^{\frac{d}{2}} v(s \bx)$, but this time with the choices $L = \lambda$ and $s = \lambda^{-1/d}$. In particular, from~\eqref{eq:scaling_bosonic}, we get $\| v \|_2^2 = 1$, and
    \[
        \cF_{\alpha = 1}(u) = \lambda^{\frac{d - 2}{d}} \left( \int_{\R^d} | \nabla v |^2 + \beta \int_{\R^d} F_1(v^2) \right),  \quad \text{with} \quad \beta := \lambda^{2/d}.
    \]
    Minimizing over $u \in L^2(\R^d)$ with $\int_{\R^d} | u |^2 = \lambda$ gives $J_1(\lambda) = \lambda^{\frac{d-2}{d}} \widetilde{J}_\beta(1)$, where $\widetilde{J}_\beta(1)$ is the infimum of the functional in parenthesis over $v \in L^2(\R^d)$ with $\int_{\R^d} | v |^2 = 1$. As in the proof of Lemma~\ref{lem:rescaling_beta}, the map $\beta \mapsto \widetilde{J}_\beta(1)$ is negative, non--increasing and concave in $\beta$. This implies that $J_1(\lambda)$ is of the form
    \[
        J_1(\lambda) = \lambda^{1 - \frac{2}{d}} f(\lambda^{\frac{2}{d}}),
    \]
    with $f$ a non--positive non--increasing concave function. In the case where $d\geq 2$, this implies that $J_1(\lambda)$ is also a non--positive non--increasing concave function. For instance, in the case where $f$ is smooth, we can differentiate twice, and get
    \[
        J_1''(\lambda) = \frac{4}{d^2} \lambda^{\frac{2}{d} - 1} f''(\lambda^{\frac{2}{d}}) + \frac{2(d-2)}{d^2} \frac{1}{\lambda^{1 + \frac{d}{2}}} \left[ \lambda^{\frac{2}{d}} f'(\lambda^{\frac{2}{d}}) - f(\lambda^{\frac{2}{d}})  \right].
    \]
    The first term is non--positive, since $f$ is concave. In addition, since $f(0) = 0$, we have $\lambda^{2/d} f'(\lambda^{2/d}) \le f(\lambda^{2/d})$ by concavity again. So the second term is also non--positive, hence $J_1'' \le 0$ and $J_1$ is concave. The result follows.
\end{proof}

\begin{remark}
	The results of this section do not depend on the behaviour of $F_1$ in $+\ii$. 
\end{remark}

\subsection{Existence of a minimizer for $\lambda >\lambda_c$}

We now focus on the existence of a minimizer for the problem $J_{\alpha = 1}(\lambda)$ problem. We start with the following lower bound.

\begin{lemma}[The energy is bounded from below]
    There is a constant $C \ge 0$ so that, for all $\lambda > 0$, we have
    \[
        J_1(\lambda) \ge - C \lambda^{\frac{d - r(d-2)}{d + 2 - dr}}.
    \]
    In particular, the energy is bounded from below.
\end{lemma}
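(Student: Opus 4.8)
The plan is to reduce the lower bound to the classical subcritical Gagliardo--Nirenberg estimate, via a pointwise minoration of the nonlinearity, and then to optimize over the kinetic energy.

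First I would establish the pointwise bound $F_1(t) \ge - C_0\, t^{r}$ for all $t \ge 0$, with $C_0$ depending only on the parameters $q,r,\fa,\fb,\fc$. For $0 \le t \le 1$ this is immediate since $q > r$ forces $t^q \le t^r$, hence $F_1(t) = -t^q \ge -t^r$; for $t \ge 1$ one discards the nonnegative linear term $\fb t$ and absorbs $\fa$ into $t^r \ge 1$, so that $F_1(t) = -\fa + \fb t - \fc t^r \ge -(\fa+\fc)t^r$. Consequently, for any $u \in L^2(\R^d)$ with $\|u\|_2^2 = \lambda$ and $\nabla u \in L^2(\R^d)$ (the only case to consider, since otherwise the bound is vacuous), $\cF_1(u) \ge \int_{\R^d} |\nabla u|^2 - C_0 \int_{\R^d} u^{2r}$.

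Next I would invoke the Gagliardo--Nirenberg inequality in the form $\int_{\R^d} u^{2r} \le C_1 \big(\int_{\R^d} |\nabla u|^2\big)^{\theta} \big(\int_{\R^d} u^2\big)^{r-\theta}$ with $\theta := \tfrac{d(r-1)}{2}$. The standing hypothesis $1 < r < \tfrac{d+2}{d}$ is exactly what guarantees $0 < \theta < 1$, i.e. that the exponent $2r$ is $L^2$-subcritical, so that the inequality applies and moreover $u \in L^{2r}(\R^d)$ automatically (this also makes $\cF_1(u)$ well defined on the relevant class). Writing $T := \int_{\R^d}|\nabla u|^2$, this yields $\cF_1(u) \ge T - C_0 C_1\, \lambda^{\,r-\theta}\, T^{\theta}$.

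Finally I would minimize $g(T) := T - C\,\lambda^{r-\theta} T^{\theta}$ over $T \ge 0$: since $\theta \in (0,1)$, $g$ has a unique interior minimum at $T_\ast$ of order $\big(\lambda^{r-\theta}\big)^{1/(1-\theta)}$, with minimal value $-C'\,\lambda^{(r-\theta)/(1-\theta)}$. A short computation gives $\tfrac{r-\theta}{1-\theta} = \tfrac{2r - d(r-1)}{2 - d(r-1)} = \tfrac{d - r(d-2)}{d+2-dr}$, the denominator being positive again by $r < \tfrac{d+2}{d}$; taking the infimum over admissible $u$ then gives the claimed bound. The argument is essentially routine, so the only real point of care — and what I would regard as the mild obstacle — is verifying that $\theta \in (0,1)$ under the assumption on $r$ and keeping the bookkeeping straight so that the optimization exponent lands precisely on $\tfrac{d-r(d-2)}{d+2-dr}$. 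Note finally, in line with the remark following the analogous bosonic statements, that this bound only uses the subcritical growth rate $r$ of $F_1$ at infinity, not its explicit form.
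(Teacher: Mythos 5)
Your proof is correct and takes essentially the same route as the paper's: the pointwise bound $F_1(t)\ge -C t^{r}$, the subcritical Gagliardo--Nirenberg interpolation (your $\theta=\tfrac{d(r-1)}{2}$ is the paper's $r\theta$, so the two forms of the inequality coincide), and the one-variable optimization in $T=\|\nabla u\|_2^2$ yielding the exponent $\tfrac{d-r(d-2)}{d+2-dr}$. Nothing further is needed.
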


\begin{proof}
    The assumption on the function $F_1$ shows that there is a constant $C$ so that $F_1(t) \ge - C t^r$. In particular
    \[
        \cF_1(u) \ge   \int_{\R^d} | \nabla u |^2 - C \int_{\R^d} |u |^{2r}.
    \]
    The Gagliardo--Niremberg inequality then states that for all $1 \le r < \frac{d}{d - 2}$ ($1 \le r < \infty$ in dimensions $d = 1,2$), there is a constant $K_{r,d} > 0$ so that, for all $u \in H^1(\R^d)$, we have
    \[
        \| u \|_{2r} \le K_{r, d} \|\nabla u \|_2^{\theta} \| u \|_2^{1 - \theta}, \quad \text{with} \quad \theta := \frac{d}{2}\frac{r-1}{r}.
    \]
    Setting $X = \| \nabla u \|_2^2$, we get
    \begin{equation} \label{eq:F1_bounded_from_below}
        \cF_1(u) \ge \| \nabla u \|_2^2 - C K_{r, d}^{2r} \lambda^{r(1 - \theta)} \| \nabla u \|_2^{2 r \theta}
        \ge \inf_{X > 0} \left\{ X - C K_{r, d}^{2r} \lambda^{r(1 - \theta)}X^{r \theta} \right\}.
    \end{equation}
    The right--hand side is bounded from below whenever $r \theta < 1$, which is our condition $r < \frac{d+2}{d}$. Under this condition, this minimum is obtained for $X$ satisfying $C K_{r,d}^{2r} \lambda^{r(1 - \theta)} r \theta X^{r \theta - 1} =\frac{1}{r\theta}$, and the result follows.
\end{proof}

It is unclear at this point that there is $\lambda > 0$ so that $J_1(\lambda) < 0$ (it could be that $J_1(\lambda) = 0$ for all $\lambda \ge 0$).  However, this is not possible, thanks to the following result.

\begin{lemma}
    For $\lambda$ large enough, we have $J_1(\lambda) < 0$.
\end{lemma}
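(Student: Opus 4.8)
The plan is to produce, for every large $\lambda$, an explicit trial function $u_\lambda$ with $\int_{\R^d}|u_\lambda|^2=\lambda$ on which $\cF_1$ is negative. The mechanism is that on a sufficiently spread-out configuration the density stays below the threshold $t=1$, so that $F_1$ there reduces to the pure power $-t^q$, and the supercriticality of $q$ makes the attractive term win once enough mass is available.

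Concretely, I would fix once and for all a bump $v\in C_c^\infty(\R^d)$ with $v\ge 0$ and $\|v\|_2=1$, and set $M:=\|v\|_\infty^2$, $K:=\int_{\R^d}|\nabla v|^2$, $P:=\int_{\R^d}v^{2q}$ (all finite and strictly positive). For $\lambda>0$ I take $u_\lambda(\bx):=\sqrt{\lambda}\,s^{d/2}v(s\bx)$ with the specific scale $s:=(\lambda M)^{-1/d}$, so that $\int_{\R^d}|u_\lambda|^2=\lambda$ and, crucially, $u_\lambda(\bx)^2=\lambda s^d v(s\bx)^2=v(s\bx)^2/M\le 1$ for all $\bx$. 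Hence $F_1(u_\lambda^2)=-u_\lambda^{2q}$ pointwise, and plugging this into the scaling identity~\eqref{eq:scaling_bosonic} (with $\alpha=1$, $L=\lambda$) gives
\[
\cF_1(u_\lambda)=\lambda s^2 K-\lambda^q s^{d(q-1)}P=KM^{-2/d}\,\lambda^{1-2/d}-PM^{1-q}\,\lambda .
\]

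I would then conclude by comparing the two powers of $\lambda$: since $1-2/d<1$, the kinetic contribution is $o(\lambda)$ as $\lambda\to\infty$, while the interaction contribution is linear in $\lambda$ with strictly negative coefficient $-PM^{1-q}$. Therefore $\cF_1(u_\lambda)\to-\infty$; in fact $\cF_1(u_\lambda)<0$ already for $\lambda>(K/P)^{d/2}M^{d(q-1)/2-1}$, whence $J_1(\lambda)\le\cF_1(u_\lambda)<0$ for $\lambda$ large enough, with an explicit threshold.

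I do not expect a genuine obstacle here. The only step requiring care is the choice of the scale $s$: it must be small enough that the rescaled density never exceeds $t=1$ (so that $F_1$ coincides with $-t^q$ and one may import the unboundedness of the pure-power functional), yet not so small that the attractive term is suppressed. That both can be arranged simultaneously for all large $\lambda$ is precisely the manifestation of the hypothesis $q>\frac{d+2}{d}$, i.e. $d(q-1)>2$, which is exactly what makes the kinetic exponent $1-2/d$ strictly smaller than the interaction exponent $1$ after rescaling. As an alternative route one could instead invoke the representation $J_1(\lambda)=\lambda^{1-2/d}\,\widetilde{J}_{\lambda^{2/d}}(1)$ obtained in the proof of Lemma~\ref{lem:props-J}, together with the fact that $\widetilde{J}_\beta(1)\to-\infty$ as $\beta\to\infty$ — which follows by testing $\widetilde{\cE}_\beta$ against the fixed function $v$ and using $\int_{\R^d}F_1(v^2)<0$.
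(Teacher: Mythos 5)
Your proof is correct and is essentially the paper's argument: the paper establishes $\cE_\alpha(u)<0$ for a fixed bump once $\alpha>\max|u|^2$ (so that $F_\alpha$ reduces to the pure power $-t^q$) and then transfers this to $J_1$ via the scaling of Lemma~\ref{lem:scaling_bosonic}, which is exactly the rescaling you carry out explicitly with $s=(\lambda M)^{-1/d}$. Your version has the small merit of producing an explicit threshold for $\lambda$ without invoking the monotonicity of $J_1$, but the mechanism is the same.
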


\begin{proof}
    For this proof, it is easier to prove that $J_\alpha(\lambda) < 0$ for some (large) value of $\alpha > 0$ and for some $\lambda > 0$; the result then follows from the scaling in Lemma~\ref{lem:scaling_bosonic}. Indeed, consider a function $u \in C^\infty_0(\R^d)$ such that
    \[
        \int_{\R^d} | u |^{2q} >  \int_{\R^d} | \nabla u |^2.
    \]
    Such a function exists, since $q > 1$ (consider $u_s = su$ with $s$ large). Now take $\alpha > \max | u |^2$, so that $F_\alpha(| u |^2) = - | u |^{2q}$. For this value of $\alpha$, we have $\cE_\alpha(u) < 0$, which proves that $J_\alpha(\lambda) < 0$ for $\lambda :=  \| u \|_2^2$.
\end{proof}

We can therefore define the bosonic critical mass $\lambda_c \in [0, \infty)$ by
\begin{equation} \label{eq:def:lambdac}
    \lambda_c := \inf \left\{ \lambda > 0, \quad J_1(\lambda) < 0\right\}.
\end{equation}
We will prove below that $\lambda_c > 0$. It is the critical mass after which the function $\lambda \mapsto J_1(\lambda)$ becomes strictly negative, while $J_1(\lambda) = 0$ for $\lambda \in [0, \lambda_c]$. According to Lemma~\ref{lem:scaling_bosonic}, it is linked to the map $\alpha_c^J(\lambda)$ defined in~\eqref{lem:critical_alpha} via the relation
\[
    \alpha_c^J(\lambda) = \left( \frac{\lambda_c}{\lambda} \right)^{\frac{2}{d} \frac{1}{q - \frac{d+2}{d}}}.
\]

We now turn to the existence of minimizers. In what follows, we restrict our attention to the case $d \ge 2$. The following result is rather classical.
\begin{lemma}
    \label{lem:existence_minimizer_bosonic_lambda>lambda_c}
    Assume $d \ge 2$. If $\lambda > \lambda_c$, then $J_1(\lambda)$ has a minimizer and $J_1(\lambda)<0$.
\end{lemma}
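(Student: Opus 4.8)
The plan is to run the standard concentration–compactness argument for the minimization problem $J_1(\lambda)$ with $\lambda>\lambda_c$, using the scaling/concavity information already established. Let $(u_n)$ be a minimizing sequence with $\|u_n\|_2^2=\lambda$; by the lower bound lemma combined with $\cF_1(u_n)\ge \|\nabla u_n\|_2^2 - CK_{r,d}^{2r}\lambda^{r(1-\theta)}\|\nabla u_n\|_2^{2r\theta}$ and $r\theta<1$, the sequence is bounded in $H^1(\R^d)$. Up to extraction $u_n\rightharpoonup u$ weakly in $H^1$ and a.e.; the issue is to upgrade this to strong $L^2$ convergence (up to translations), so that the mass $\lambda$ is fully retained in the limit. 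I would first record that one may assume the $u_n$ are nonnegative and, say, symmetric-decreasing is \emph{not} available here because $F_1$ is not monotone, so instead I would invoke the standard splitting into the three cases vanishing / dichotomy / compactness.

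The core of the argument is to exclude vanishing and dichotomy. \textbf{Vanishing} is excluded because if $\sup_{y}\int_{B(y,1)}|u_n|^2\to 0$, then by the Lions vanishing lemma $u_n\to 0$ in $L^{2s}(\R^d)$ for every $1<s<d/(d-2)$, in particular in $L^{2q'}$ for any subcritical $q'$; since $F_1(t)\ge -Ct^r$ with $r<\frac{d+2}{d}<\frac{d}{d-2}$ this forces $\int F_1(u_n^2)\to 0$, hence $\liminf\cF_1(u_n)\ge 0$, contradicting $J_1(\lambda)<0$ (which holds for $\lambda>\lambda_c$ by definition of $\lambda_c$). \textbf{Dichotomy} is excluded using strict subadditivity: the concavity of $\lambda\mapsto J_1(\lambda)$ (Lemma~\ref{lem:props-J}, valid for $d\ge2$) together with $J_1(0)=0$ gives, for $0<\mu<\lambda$, the inequality $J_1(\lambda)\le J_1(\mu)+J_1(\lambda-\mu)$ always, and I need the \emph{strict} version $J_1(\lambda)<J_1(\mu)+J_1(\lambda-\mu)$ whenever the splitting is nontrivial and $\lambda>\lambda_c$. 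This strictness is where I expect to spend the most care: concavity gives $J_1(\lambda)\le J_1(\mu)+J_1(\lambda-\mu)$, and strictness fails only if $J_1$ is affine on $[\mu,\lambda]$; but on $(\lambda_c,\infty)$ the function is strictly negative and, being concave and non-increasing with $J_1(\lambda)\to$ some limit, an affine piece through the origin's tangent behaviour would force $J_1\equiv 0$ there — more cleanly, I would use the scaling relation $J_1(\lambda)=\lambda^{1-2/d}f(\lambda^{2/d})$ with $f$ concave, $f(0)=0$, to show that equality in subadditivity on a nondegenerate interval forces $f$ affine on a corresponding interval, and then trace this back to a contradiction with $J_1(\lambda)<0=J_1(\lambda_c)$ together with monotonicity of the normalized quantity $J_1(\lambda)/\lambda$. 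If a fully general strict inequality proves awkward, the fallback is the weaker but sufficient statement $J_1(\lambda)<J_1(\mu)$ for $0<\mu<\lambda$ when $\lambda>\lambda_c$ (strict monotonicity past the critical mass), which already rules out dichotomy since any mass escaping to infinity carries nonnegative energy and the remaining piece then satisfies $\cF_1\le J_1(\lambda)<J_1(\mu)$, impossible.

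Once vanishing and dichotomy are excluded, compactness holds: there are translations $y_n$ such that $u_n(\cdot+y_n)\to u$ strongly in $L^2(\R^d)$, hence (using the $H^1$ bound and interpolation) strongly in $L^{2s}$ for all subcritical $s$, so $\int F_1(u_n^2)\to\int F_1(u^2)$ by dominated convergence (domination by $C(u_n^2)^r$ and $L^{2r}$ convergence), while $\int|\nabla u|^2\le\liminf\int|\nabla u_n|^2$ by weak lower semicontinuity. Therefore $\cF_1(u)\le\liminf\cF_1(u_n)=J_1(\lambda)$, and since $\|u\|_2^2=\lambda$, $u$ is admissible, so $\cF_1(u)=J_1(\lambda)$: $u$ is a minimizer. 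Finally $J_1(\lambda)<0$ is immediate from $\lambda>\lambda_c$ and the definition~\eqref{eq:def:lambdac} together with monotonicity. The main obstacle, as indicated, is pinning down the precise strict-subadditivity / strict-monotonicity statement needed to kill dichotomy, exploiting the concavity and the scaling structure already in hand rather than any delicate PDE analysis.
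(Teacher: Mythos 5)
Your route (full Lions concentration--compactness with translations, excluding vanishing via $J_1(\lambda)<0$ and dichotomy via strict subadditivity) is genuinely different from the paper's, which is much shorter: the paper replaces each $u_n$ by its symmetric decreasing rearrangement, invokes Strauss compactness of $H^1_{\rm rad}\hookrightarrow L^{2r}$ for $d\ge 2$, passes to the limit by dominated convergence, and then recovers the full mass not by ruling out dichotomy but by noting that the weak limit minimizes the \emph{relaxed} problem and that $\lambda\mapsto J_1(\lambda)$ is strictly decreasing past $\lambda_c$ (a consequence of concavity, $J_1(0)=0$ and $J_1(\lambda)<0$). Your reason for discarding rearrangement is incorrect: $\int_{\R^d}F_1(u^2)$ depends only on the distribution function of $|u|$, hence is invariant under Schwarz symmetrization for \emph{any} $F_1$ with $F_1(0)=0$ — monotonicity of $F_1$ is irrelevant; only the kinetic term changes, and it decreases by P\'olya--Szeg\H{o}. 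So the simpler radial route is available, and it is what the paper uses.

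Within your own scheme, the main argument for killing dichotomy is sound but should be pinned down as follows: concavity plus $J_1(0)=0$ gives $J_1(\mu)\ge\frac{\mu}{\lambda}J_1(\lambda)$ and $J_1(\lambda-\mu)\ge\frac{\lambda-\mu}{\lambda}J_1(\lambda)$, with equality at an interior point forcing $J_1$ to be affine on $[0,\lambda]$; since $J_1\equiv 0$ on $[0,\lambda_c]$ with $\lambda_c>0$ (proved independently via Gagliardo--Nirenberg, so there is no circularity) while $J_1(\lambda)<0$, affineness is impossible and $J_1(\mu)+J_1(\lambda-\mu)>J_1(\lambda)$ for all $0<\mu<\lambda$. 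By contrast, your fallback — "any mass escaping to infinity carries nonnegative energy, so the remaining piece satisfies $\cF_1\le J_1(\lambda)<J_1(\mu)$" — is wrong as stated: if the escaping mass $\lambda-\mu$ exceeds $\lambda_c$, that piece can carry strictly negative energy, and strict monotonicity of $J_1$ alone does not rule this configuration out. Stick with the strict subadditivity argument; with it, your proof goes through, at the cost of considerably more machinery than the paper needs.
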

Note the strict inequalities. Below, we will prove that $J_1(\lambda_c)$ also has a minimizer,  although $J_1(\lambda_c)= 0$.

\begin{proof}
    Consider $(u_n)$ a minimizing sequence for $J_1(\lambda)$ (so $(u_n)$ is bounded in $L^2(\R^d)$ with $\| u_n \|_2 = \lambda$).  By re-arrangement, we may assume without loss of generality that all functions $u_n$ are positive radial non--increasing. The inequality~\eqref{eq:F1_bounded_from_below} shows that the sequence $(u_n)$ is bounded in $H^1_{\rm rad}(\R^d)$, the set of radial functions in $H^1(\R^d)$. By Banach--Alaoglu theorem, we may extract a sub-sequence, still noted $(u_n)$, which converges weakly to some $u_*$ in $H^1_{\rm rad}(\R^d)$.
    
    \medskip
    
    Recall Strauss' theorem~\cite{Str-77} (see also~\cite[Lemma A.II]{BerLio-83}), which states that, for $d \ge 2$, the radial embedding $H^1_{\rm rad}(\R^d) \to L^p(\R^d)$ is compact for $2 < p < \frac{2d}{d - 2}$. In particular, $(u_n)$ converges strongly to $u_*$ in $L^{2r}(\R^d)$, and, up to another sub-sequence, we may also assume that $u_n \to u_*$ pointwise, and that there is a domination $G \in L^{2r}(\R^d)$ so that $| u_n | \le G$ for all $n$.
    
    \medskip
    
    Using the weak lower semi-continuity of the norm, we have
    \[
        \int_{\R^d} | \nabla u_* |^2 \le \liminf_n \int_{\R^d} | \nabla u_n |^2.
    \]
    On the other hand, since $F_1$ is continuous, the functions $F_1( u_n^2)$ converges pointwise to $F_1(u^2)$. In addition,  $| F_1(t) | \le C t^{r}$ for some constant $C$, thus we have the domination
    \[
        \left| F_1(u_n^2) \right| \le C | u_n |^{2r} \le C G^{2r},
    \]
    which is integrable. The dominated convergence theorem then shows that $\displaystyle  \int_{\R^d} F(u_*^2) = \lim_{n \to \infty}  \int_{\R^d} F(u_n^2)$, so
    \[
        \cF_1(u_*) \le \liminf_n \cF_1(u_n) = J_1(\lambda).
    \]
    Letting $\lambda' := \| u_* \|_2^2$, we have $0 \le \lambda' \le \lambda$ by the weak lower semi--continuity of the $L^2$-norm. This already proves that $u_*$ is a minimizer for the relaxed problem (with condition $\| u \|_2^2 \le \lambda$). In addition, since $J_1(\lambda)$ is concave and strictly negative for $\lambda > \lambda_c$, it is strictly decaying for $\lambda > \lambda_c$. So, in this case, we have
    \[
        J_1(\lambda') \le \cF_1(u_*) \le J_1(\lambda) \le J_1(\lambda'),
    \]
    which proves that $\lambda = \lambda'$. Hence, if $\lambda > \lambda_c$, $u_*$ is a minimizer for $J_1(\lambda)$, with $\| u_* \|_2^2 = \lambda$.
\end{proof}

\subsection{Euler-Lagrange equation}
In this section, we assume that $d \ge 2$ and $\lambda > \lambda_c$. Let $u$ be a radial non--increasing positive minimizer. It satisfies the Euler--Lagrange equation
\begin{equation} \label{eq:EL_mu}
- \Delta u  = g_\mu(u), \quad \text{with} \quad g_\mu(t) := - t \left( F_1'(t^2) + \mu \right), \quad \text{for some} \quad \mu \in \R.
\end{equation}
Actually, since $u$ is also the minimizer for the relaxed problem $\| u \|_2^2 \le \lambda$, the Lagrange multiplier satisfies $\mu \ge 0$.

\medskip

The goal of this section is to study this semi--linear equation. More specifically, for all $\mu > 0$, we will prove that there is a unique ground state solution to~\eqref{eq:EL_mu}. Given $\lambda > 0$ and a minimizer $u_\lambda$ for the problem $J_1(\lambda)$, we know that there is a $\mu \ge 0$ for which $u_\lambda$ is a solution to~\eqref{eq:EL_mu}, but there could {\em a priori} be another minimizer $v_\lambda$ corresponding to another $\mu$. So, unfortunately, uniqueness of solutions of~\eqref{eq:EL_mu} does not imply uniqueness for our original problem, see also~\cite{LewRot-20}.

\medskip

Recall that $u$ is called a {\em ground state} solution to~\eqref{eq:EL_mu} if it is positive and decaying to $0$ at  infinity. By moving plane techniques~\cite{GidNiNir-79}, such a solution is radial decreasing, up to a translation. In what follows, we focus on positive radial decreasing solutions of~\eqref{eq:EL_mu}.

\subsubsection{Existence of Ground states}

\begin{proposition}
    \label{prop:existence_ground_state}
    For all $\mu > 0$, there is a unique ground state solution to~\eqref{eq:EL_mu}. This solution is of class $C^3$, and satisfies the following pointwise bound: there is $0 < c_- \le c_+ < \infty$ so that for any $\bx \in \R^d$,
    \[
       c_- \leq (1 + | \bx |^{\frac{d-1}{2}}) \re^{\sqrt{\mu} | \bx |} u_\mu(\bx)\leq c_+  , 
       \quad \text{and} \quad
      (1 + | \bx |^{\frac{d-1}{2}}) \re^{\sqrt{\mu} | \bx |} | \nabla u_\mu   | (\bx) \le c_+ .
    \]
\end{proposition}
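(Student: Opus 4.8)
The plan is to place the Euler--Lagrange equation \eqref{eq:EL_mu} in the classical framework for ground states of semilinear elliptic equations. By the reduction already performed in the text, a ground state is a positive radial decreasing solution of $-\Delta u = g_\mu(u)$ with $g_\mu(t) = -t\bra{F_1'(t^2)+\mu}$, so the first task is to record the structure of $g_\mu$. Since $F_1(0)=0$, $F_1\in C^2$ and $F_1$ is strictly concave ($F_1''<0$ on $(0,1)\cup(1,\infty)$), the map $s\mapsto -F_1'(s)$ is a strictly increasing bijection of $[0,\infty)$ onto itself. Consequently $g_\mu\in C^1([0,\infty))$, $g_\mu(0)=0$, $g_\mu(t)/t\to-\mu<0$ as $t\to0^+$, and $g_\mu$ has a unique positive zero $b_\mu$, being negative on $(0,b_\mu)$ and positive on $(b_\mu,\infty)$. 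Moreover $g_\mu(t)=\fc r\,t^{2r-1}-(\fb+\mu)t$ for $t\ge 1$, so $g_\mu$ grows subcritically at infinity ($2r-1<\frac{d+2}{d-2}$ for $d\ge3$, since $r<\frac{d+2}{d}<\frac{d}{d-2}$; any polynomial growth is subcritical if $d=2$), and $G_\mu(t):=\int_0^t g_\mu=-\tfrac12 F_1(t^2)-\tfrac\mu2 t^2$ is negative near $0$ and tends to $+\infty$ as $t\to+\infty$. In particular $g_\mu$ satisfies the Berestycki--Lions conditions.

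\emph{Existence and regularity.} I would obtain a positive radial decreasing solution from the Berestycki--Lions theorem \cite{BerLio-83} (for $d=2$, the Berestycki--Gallou\"et--Kavian variant): minimize $\int_{\R^d}|\nabla u|^2$ over $\set{u\in H^1(\R^d):\int_{\R^d}G_\mu(u)=1}$, use Schwarz symmetrization together with the compact radial embedding $H^1_{\rm rad}(\R^d)\hookrightarrow L^p(\R^d)$ for $2<p<\frac{2d}{d-2}$ (available because $g_\mu$ is subcritical), and rescale to absorb the Lagrange multiplier; alternatively, a shooting argument for the radial ODE produces the solution directly. For the regularity, $u\in H^1(\R^d)$ and the subcritical growth of $g_\mu$ place $u$ in $W^{2,p}_{\rm loc}$ for every $p<\infty$, hence in $C^{1,\alpha}_{\rm loc}$; a standard elliptic bootstrap, using that $F_1\in C^2$ (so $g_\mu\in C^1$), then gives $u\in C^3$ — the natural regularity, $F_1$ being only $C^2$.

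\emph{Pointwise bounds.} Being a radial decreasing $H^1$ function, $u$ tends to $0$ at infinity, so for $|\bx|$ large one has $0<u<1$ and
\[
    -\Delta u+\mu u=-u\,F_1'(u^2)=q\,u^{2q-1}\ge 0,
\]
the right-hand side being $o(u)$ as $|\bx|\to\infty$. Hence $u$ is a positive supersolution of $(-\Delta+\mu)w=0$ in a neighbourhood of infinity, which, by comparison with a multiple of the decaying radial solution of that equation, yields the lower bound; and a first crude comparison shows $u\lesssim\re^{-(\sqrt\mu-\varepsilon)|\bx|}$ for every $\varepsilon>0$, so that $q\,u^{2q-1}$ decays strictly faster than $\re^{-\sqrt\mu|\bx|}$, after which representing $u$ through the resolvent of $-\Delta+\mu$ — whose kernel decays like $|\bx|^{-\frac{d-1}{2}}\re^{-\sqrt\mu|\bx|}$ — gives the matching upper bound. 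Together these give the two-sided estimate on $(1+|\bx|^{\frac{d-1}{2}})\re^{\sqrt\mu|\bx|}u_\mu$. The gradient bound then follows by applying interior elliptic estimates to $u$ on the unit balls $B(\bx,1)$, using the bounds just obtained for $u$ and for $g_\mu(u)$ there. All of this is classical; see \cite{BerLio-83,GidNiNir-79}.

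\emph{Uniqueness --- the main obstacle.} The delicate step is uniqueness of the positive radial decreasing solution. I would deduce it from the uniqueness theory for ground states of semilinear equations (Kwong, McLeod, Serrin--Tang; see \cite{Kwo-89,McL-93}), used exactly as in \cite{LewRot-20} for the double-power nonlinearity. The structural facts listed above supply most of the hypotheses; what remains is a monotonicity condition on the nonlinearity, which here amounts to the statement that $t\mapsto\dfrac{t\,g_\mu'(t)}{g_\mu(t)}$ is non-increasing on the interval $(b_\mu,\infty)$ on which $g_\mu>0$. A direct computation gives
\[
    \frac{t\,g_\mu'(t)}{g_\mu(t)}=1+\frac{-2t^2F_1''(t^2)}{-F_1'(t^2)-\mu},
\]
and one checks this to be decreasing by treating separately the ranges $t^2<1$ and $t^2>1$ (the two resulting expressions agreeing at $t=1$ because $F_1\in C^2$, and the one on $t^2>1$ being monotone precisely because of the relations defining $\fa$, $\fb$, $\fc$). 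I expect this verification --- and checking that the hypotheses match those of whichever uniqueness theorem is invoked --- to be the main obstacle; the rest is routine.
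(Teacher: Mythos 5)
Your proposal follows essentially the same route as the paper: Berestycki--Lions (with the two-dimensional variant) for existence and $C^3$ regularity, McLeod/Serrin--Tang for uniqueness via the monotonicity of $t\,g_\mu'(t)/g_\mu(t)$ on each side of the unique positive zero of $g_\mu$ (which the paper verifies explicitly, piecewise on $t\le 1$ and $t\ge 1$, exactly as you anticipate), and resolvent-kernel asymptotics for the sharp two-sided decay, for which the paper cites Hoffmann-Ostenhof--Swetina. The only cosmetic difference is the gradient bound, which the paper obtains by convolving with the Yukawa kernel rather than by interior elliptic estimates on unit balls; both work.
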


\begin{proof}
    \underline{{\bf Existence.}} This is a consequence of a result by Berestycki and Lions~\cite[Theorem 1]{BerLio-83} for $d\geq 3$. Our function $g_\mu$ is explicitly given by
\[
    g_\mu(t) = \begin{cases}
        - t \left( -q t^{2(q-1)} + \mu \right) & t \le 1 \\
        -t \left( \fb - \fc r t^{2(r-1)} + \mu \right) & t \ge 1.
    \end{cases}
\]
It satisfies $g_\mu(-t) = - g_\mu(t)$, and
\begin{equation}\label{eq:cd-BerLio}
  \lim_{t\to 0} \frac{g_\mu(t)}{t} = - \mu <0, \qquad \text{and} \qquad \lim_{t\to \ii} \frac{g_\mu(t)}{t^{\frac{d+2}{d-2}}} = \lim_{t \to \infty} \fc t^{-2(\frac{d}{d-2} - r)} = 0.
\end{equation}
Finally, since $g_\mu(t) \to +\infty$ as $t \to \infty$, the function
\[
    G_\mu(t) := \int_0^t g_\mu(s) \rd s =\left\{ \begin{array}{ll}
    	\frac{1}{2}t^{2q}-\frac{\mu}{2} t^2 & t\leq 1\\
    	\frac{1}{2}\bra{\fa+ \fc  t^{2r} -\bra{\fb+\mu}t^2}& t\geq 1
    \end{array}\right.
\]
becomes positive for $t$ large enough. So the hypotheses of~\cite[Theorem 1]{BerLio-83} are satisfied. We deduce that~\eqref{eq:EL_mu} has a continuous ground state solution $u_\mu$, which is positive $u_\mu > 0$, radial decreasing, of class $C^2$, and satisfying the exponential bound
\begin{equation} \label{eq:first_bound_umu}
    | D^n u |(\bx) \le C \re^{- \delta | \bx |}
\end{equation}
for some $C \ge 0$, $\delta > 0$ and for $n = (n_1, \cdots, n_d) \in \N_0^d$ with $n_1 + \cdots + n_d \le 2$. In addition, since our function $g_\mu$ is of class $C^1$, we get the further regularity that $u_\mu$ is of class $C^3$. In $d = 2$, we rather use~\cite{BerGalOta-83}. In this case, it is enough to replace the power  $\frac{d+2}{d-2}$ in~\eqref{eq:cd-BerLio} by any power $\ell$, and we take $\ell > 2r-1$. 

\medskip

\underline{{\bf Uniqueness.}}  To prove uniqueness, we use a result of McLeod~\cite[Theorem 1]{McL-93} (see also Serrin--Tang~\cite[Theorem 1.1]{SerTan-00} for dimension $d\ge 3$). We check that $g_\mu$ is $C^1$, satisfies $g_\mu(t) \le 0$ for $t \in (0, T]$, and $g_\mu(t) > 0$ for $t > T$, with
\[
    T = \begin{cases}
        \left( \frac{\mu}{q} \right)^{\frac{1}{2(q-1)}} & \quad \text{if}  \quad 0 < \mu < q ; \\
        \left( \frac{\fb + \mu}{\fc r} \right)^{\frac{1}{2(r-1)}} = \left( \frac{q(q-r) + \mu (r-1)}{q(q-1)} \right)^{\frac{1}{2(r-1)}} & \quad \text{if} \quad \mu \ge q.
    \end{cases}
\]
In addition, we consider the function defined, for any $\lambda>0$, by $I_{\lambda}(t):=\lambda t g_\mu'(t)-(\lambda+2)g_\mu(t)$ and the auxiliary function $k_\mu(t):=\frac{tg'_\mu(t)}{g_\mu(t)}$
. Explicitly, we have
\[
    k_\mu(t) = \begin{cases}
        (2q - 1) + \frac{2(q-1) \mu}{q t^{2(q-1)} - \mu} & \quad t \le 1 \\
         (2r - 1) + \frac{2(r-1)(\fb + \mu)}{\fc r t^{2(r-1)} - (\fb + \mu)} & \quad t \ge 1.
    \end{cases}
\]
which has a singularity at $t = T$, and $I_{\lambda}(t)=\lambda g_\mu(t)\left(k_\mu(t)-1-\frac{2}{\lambda}\right)$.

Using the explicit formula, we can easily remark that $k_\mu(0)=1$, $\lim_{t\to T^\pm}k_\mu(t)=\pm\infty$ and $\lim_{t\to +\infty}k_\mu(t)=2r-1>1$. Moreover, $k_\mu$ is decreasing on $(0,T)$ and on $(T,\infty)$. On the one hand, this implies that $I_{\lambda}<0$ on $(0,T)$ for all $\lambda>0$. On the other hand, we deduce that for any $S>T$, there exists a unique $\lambda(S)>0$ such that $k_\mu(S)=1+\frac{2}{\lambda(S)}$. As a consequence, $I_{\lambda(S)}>0$ on $(T,S)$ and $I_{\lambda(S)}<0$ on $(S, \infty)$.

So the hypothesis of~\cite[Theorem 1]{McL-93} are satisfied, and we deduce that ground state is unique.

\medskip

\underline{{\bf Decay at infinity.}} The decay estimates for the function $u_\mu$ can be deduced from~\cite{HofHofSwe-85}. Indeed, the function $u_\mu$ satisfies the Schrödinger equation
\[
    (- \Delta + V + \mu) u = 0, \quad \text{with} \quad V := F_1'(u_\mu^2).
\]
Since $u_\mu$ is continuous and radial, so is the potential $V$. In addition the crude estimate~\eqref{eq:first_bound_umu} shows that this potential $V$ decays exponentially fast as $\bx \to \infty$. We are therefore in the setting of~\cite[Corollary 2.2 (ii)]{HofHofSwe-85} with $R = 0$, and we deduce the existence of $0 < c_- \le c_+ < \infty$ so that 
\[
    \forall \bx \in \R^d, \qquad  c_- \le  | \bx |^{\frac{d-1}{2}}\re^{\sqrt{\mu} | \bx |} u_\mu(\bx) \le c_+.
\]
In addition, since $u_\mu$ is continuous (hence bounded) at the origin, we can replace $| \bx |^{\frac{d-1}{2}}$ by $(1 + | \bx |^{\frac{d-1}{2}})$, up to modifying the constants $c_\pm$. 

\medskip

It remains to prove the bound for $| \nabla u |$. The proof is similar to~\cite[Lemma 19]{GonLewNaz-21}. Note that since $\mu > 0$, the operator $(- \Delta + \mu)$ is invertible, and we can write
\[
    u_\mu = - ( - \Delta + \mu)^{-1} (V u_\mu), \quad \text{so} \quad u_\mu = - Y_{\sqrt{\mu}} * (Vu_\mu), \quad \text{and} \quad
    \nabla u_\mu = - (\nabla Y_{\sqrt{\mu}}) * (V u_\mu),
\]
where $Y_{\sqrt{\mu}}$ is the usual $d$--dimensional Yukawa potential, satisfying $(- \Delta + \mu) Y_{\sqrt{\mu}} = \delta_0$ in the distributional sense. In Fourier, this is
\[
    \widehat{Y}_{\sqrt{\mu}}(\bk) := \frac{1}{(2 \pi)^{d/2}} \int_{\R^d} Y_{\sqrt{\mu}}(\bx) \re^{- \ri \bk \cdot \bx} \rd \bx = \frac{1}{(2 \pi)^{d/2}} \dfrac{1}{\mu + | \bk |^2}.
\]
Since $Y_{\sqrt{\mu}}$ solves $(- \Delta + \mu) Y_{\sqrt{\mu}} = 0$ on $\cB(0, R)^c$ for any  $R > 0$, the previous bounds apply to the Yukawa potential, and there is $0 < c_1 \le c_2 < \infty$ so that, for all $\bx \in \R^d$, we have~\cite[Formula 9.7.2]{AS64}
\[
    c_1  \frac{\re^{- \sqrt{\mu} | \bx |}}{| \bx |^{\frac{d-1}{2}}} \le Y_{\sqrt{\mu}}(\bx) \le c_2 \frac{\re^{- \sqrt{\mu} | \bx |}}{| \bx |^{\frac{d-1}{2}}}.
\]
In addition, we also have $(- \Delta + \mu) (\partial_i Y_{\sqrt{\mu}}) = 0$ on $\cB(0, R)^c$ for any  $R > 0$ and any $1 \le i \le d$, so we also have
\[
    | \nabla Y_{\sqrt{\mu}} |(\bx) \le c_3 \frac{\re^{- \sqrt{\mu} | \bx |}}{| \bx |^{\frac{d-1}{2}}} \le c_4 Y_{\sqrt{\mu}}(\bx).
\]
For some $c_3, c_4 \ge 0$. On the other hand, since $| F_1' (t^2) | \le q t^{2(q-1)}$, we have the pointwise bound
\[
    | V u_\mu | (\bx) \le  q | u_\mu |^{2q - 1} (\bx) \le C  \frac{\re^{-\sqrt{\mu}(2q - 1) | \bx |}}{ (1 + | \bx |^{\frac{d-1}{2}})^{2q-1}}  \le \widetilde{C} Y_{(2q-1) \sqrt{\mu}}(\bx) .
\]
This gives the pointwise bound
\[
    | \nabla u_\mu | \le | \nabla Y_{\sqrt{\mu}} |* | V u_\mu | \le  c_4 \widetilde{C} \left(  Y_{\sqrt{\mu}}* Y_{(2q-1) \sqrt{\mu}} \right).
\]
The last convolution can be computed in Fourier space. From the equality
\[
    \frac{1}{(| \bk |^2 + a^2)} \frac{1}{(| \bk |^2 + b^2)} = \frac{1}{b^2 - a^2}\left( \frac{1}{|\bk|^2 + a^2} - \frac{1}{| \bk |^2 + b^2} \right),
\]
we get $Y_a * Y_b = \frac{1}{(2 \pi)^{d/2}}\frac{1}{b^2 - a^2} \left( Y_a - Y_b  \right)$, so 
\[
    Y_{\sqrt{\mu}}* Y_{(2q-1) \sqrt{\mu}} = \frac{1}{\mu (2 \pi)^{d/2}} \frac{1}{(2q - 1)^2 - 1} \left( Y_{\sqrt{\mu}} - Y_{(2q-1)\sqrt{\mu}} \right).
\]
Since $2q - 1 > 1$, the function $Y_{(2q-1)\sqrt{\mu}}$ decays faster than $| Y_{\sqrt{\mu}}|$, and the result follows.
\end{proof}

In what follows, we denote by $u_\mu$ the unique ground state of~\eqref{eq:EL_mu}. We also define 
\[
    \Lambda(\mu) := \int_{\R^d} | u_\mu |^2.
\]
Clearly, if $u$ is a minimizer of $J_1(\lambda)$, then the corresponding Lagrange multiplier $\mu$ is such that $\Lambda(\mu) = \lambda$. However, it is unclear if the map $\mu \mapsto \Lambda(\mu)$ is one-to-one, and we may have several candidates $u_{\mu_1}, u_{\mu_2}, ...$ with $\lambda = \Lambda(\mu_1) = \Lambda(\mu_2) = ...$ to be the minimizer(s) of $J_1(\lambda)$. In particular, it is unclear that the minimizer of $J_1(\lambda)$ is always unique.


\subsubsection{Behaviour of the ground state solution as $\mu \to 0$ and $\mu \to \infty$}

In Appendix~\ref{sec:appendix:asymptotics} below, we perform a detailed analysis of the function $u_\mu$ in the limits $\mu \to 0$ and $\mu \to \infty$. For the sake of clarity, we postpone this lengthy analysis, and just state a brief summary of the main points that we will use in the sequel.

\begin{proposition}[Asymptotics for $u_\mu$]
    \label{prop:mu1_mu2}
    The map $\mu \mapsto \Lambda(\mu)$ is continuous on $(0, \infty)$.
    \begin{enumerate}
        \item {\bf (Limit $\mu \to 0$)}. There is $\mu_{\rm min} > 0$ so that, for all $0 < \mu < \mu_{\rm min}$, we have $\cF_1(u_\mu) > 0$. In particular, for $0 < \mu < \mu_{\rm min}$, $u_\mu$ is not an optimizer for the problem $J_1(\lambda)$, for any $\lambda > 0$.
        \item {\bf (Limit $\mu \to \infty$)}. We have $\lim_{\mu \to \infty} \Lambda(\mu) = + \infty$.
    \end{enumerate}
\end{proposition}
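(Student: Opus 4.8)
The whole statement follows from one algebraic identity. Since $u_\mu$ is smooth and exponentially decaying by Proposition~\ref{prop:existence_ground_state}, testing $-\Delta u_\mu=g_\mu(u_\mu)$ against $u_\mu$ and against $\bx\cdot\nabla u_\mu$ yields the two Pohozaev relations
\[
    \|\nabla u_\mu\|_2^2=-\int_{\R^d}u_\mu^2\,F_1'(u_\mu^2)-\mu\,\Lambda(\mu),\qquad \frac{d-2}{2}\|\nabla u_\mu\|_2^2=-\frac d2\int_{\R^d}F_1(u_\mu^2)-\frac d2\,\mu\,\Lambda(\mu),
\]
where we used $\int_0^t s\,F_1'(s^2)\,\rd s=\tfrac12F_1(t^2)$ (recall $F_1(0)=0$). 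Eliminating $\int_{\R^d}F_1(u_\mu^2)$ gives, for every $d\ge2$,
\[
    \cF_1(u_\mu)=\|\nabla u_\mu\|_2^2+\int_{\R^d}F_1(u_\mu^2)=\frac2d\,\|\nabla u_\mu\|_2^2-\mu\,\Lambda(\mu),
\]
so claim (1) amounts to $\|\nabla u_\mu\|_2^2>\tfrac d2\,\mu\,\Lambda(\mu)$ for $\mu$ small. Continuity of $\mu\mapsto\Lambda(\mu)$ on $(0,\infty)$ I would deduce from uniqueness of the ground state: writing the radial profile as the solution of the associated ODE with shooting data $u(0)=\|u_\mu\|_\infty$, $u'(0)=0$, the value $\|u_\mu\|_\infty$ is bounded above and below on compact subsets of $(0,\infty)$ (above by the exponential bound of Proposition~\ref{prop:existence_ground_state}, below by the first Pohozaev relation and Gagliardo--Niremberg), so continuous dependence gives $u_{\mu_n}\to u_{\mu_0}$ in $C^2_{\rm loc}$, and the uniform-in-$\mu$ (near $\mu_0$) exponential tails upgrade this to $L^2$, hence $\Lambda(\mu_n)\to\Lambda(\mu_0)$.

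For the limit $\mu\to\infty$, I would first prove $M_\mu:=u_\mu(0)\to\infty$: if $M_\mu\le C$ along a subsequence then $|F_1'(u_\mu^2)|$ is bounded on $\R^d$ by some $C'$, hence $-\Delta u_\mu=-u_\mu F_1'(u_\mu^2)-\mu u_\mu\le(C'-\mu)\,u_\mu<0$ once $\mu>C'$, which is impossible at the positive interior maximum. Then I would rescale to the subcritical ground state: with $b_\mu:=(\fb+\mu)^{-1/2}$ and $a_\mu:=\bigl((\fb+\mu)/(\fc r)\bigr)^{1/(2(r-1))}$, the function $v_\mu(\by):=a_\mu^{-1}u_\mu(b_\mu\by)$ satisfies $\Lambda(\mu)=a_\mu^2b_\mu^d\,\|v_\mu\|_2^2$ and $-\Delta v_\mu=N_\mu(v_\mu)$, where our choice of $a_\mu,b_\mu$ makes the rescaled nonlinearity $N_\mu$ converge, uniformly on compact subsets of $[0,\infty)$, to $v\mapsto v^{2r-1}-v$ as $\mu\to\infty$. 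Uniform $H^1$ bounds (from the Pohozaev relations) and elliptic regularity (subcritical growth) give a uniform $L^\infty$ bound and $C^2_{\rm loc}$-precompactness; the limit $v_\infty\ge0$ is a radial solution of $-\Delta v_\infty+v_\infty=v_\infty^{2r-1}$, and it is nontrivial (normalising by $\|v_\mu\|_\infty$ one sees $M_\mu/a_\mu$ is bounded below, otherwise one would produce a nonzero solution of $-\Delta w+w=0$), so $v_\infty=Q$ is the unique ground state, the exponent $2r-1$ being Sobolev-subcritical since $r<\tfrac{d+2}d<\tfrac d{d-2}$. Passing to the limit in the mass (the tails of $v_\mu$ are uniformly small) gives $\Lambda(\mu)\sim(\fc r)^{-1/(r-1)}\|Q\|_2^2\,(\fb+\mu)^{\frac1{r-1}-\frac d2}$, which tends to $+\infty$ precisely because $r<\tfrac{d+2}d$, i.e. $\tfrac1{r-1}>\tfrac d2$.

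For the limit $\mu\to0$, by the identity above it suffices to prove $\|\nabla u_\mu\|_2^2>\tfrac d2\,\mu\,\Lambda(\mu)$ for $\mu$ small, which I would split into (i) $\mu\,\Lambda(\mu)\to0$ and (ii) $\liminf_{\mu\to0}\|\nabla u_\mu\|_2^2>0$. For (ii), the elementary bound $-t\,F_1'(t)\le C\,t^r$ with $C=\max(q,\fc r)$ together with the first Pohozaev relation gives $\|\nabla u_\mu\|_2^2\le C\int_{\R^d}u_\mu^{2r}$, and combining with Gagliardo--Niremberg ($r\theta<1$) shows $\|\nabla u_\mu\|_2$ cannot vanish along a nontrivial sequence; one makes this quantitative using the shape of $F_1$ at the relevant scale. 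For (i), one has to describe how $u_\mu$ degenerates as $\mu\to0$: since $F_1(t)=-t^q$ is supercritical or critical as $t\to0$, the profile spreads out and/or its amplitude degenerates, and a rescaling argument shows that $\mu\,\Lambda(\mu)=-\tfrac d2\int_{\R^d}F_1(u_\mu^2)+\tfrac{d-2}2\int_{\R^d}u_\mu^2\,F_1'(u_\mu^2)\to0$. The genuine difficulty lies in this last point: unlike $\mu\to\infty$, there is no single natural rescaling as $\mu\to0$, since the nonlinearity interpolates between the supercritical power $-t^q$ near $t=0$ and the subcritical $-\fc t^r$ for $t$ large, and a priori one does not know at which scale $u_\mu$ carries its mass and energy; pinning down the correct limiting object — which may require distinguishing $q<\tfrac d{d-2}$ from $q\ge\tfrac d{d-2}$ — and running the associated concentration/rescaling argument is the reason this analysis is deferred to the appendix.
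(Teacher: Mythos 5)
Your algebraic identity $\cF_1(u_\mu)=\tfrac2d\|\nabla u_\mu\|_2^2-\mu\Lambda(\mu)$ is correct (it is the same Pohozaev computation the paper uses), and your treatment of the limit $\mu\to\infty$ — rescaling by $a_\mu\sim(\fb+\mu)^{1/(2(r-1))}$, $b_\mu=(\fb+\mu)^{-1/2}$ and identifying the limit with the subcritical ground state $Q$ by a compactness argument rather than the paper's fixed-point argument — is a legitimate alternative and recovers the correct exponent $\frac{d+2-dr}{2(r-1)}>0$. The continuity sketch is acceptable (the paper itself only cites a reference).

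The gap is in the limit $\mu\to0$: your reduction of claim (1) to \emph{(i)} $\mu\Lambda(\mu)\to0$ and \emph{(ii)} $\liminf_{\mu\to0}\|\nabla u_\mu\|_2^2>0$ is invalid, because (ii) is \emph{false} in the subcritical regime $\frac{d+2}{d}<q<\frac{d}{d-2}$ — which is precisely the regime relevant to the fermionic theorem in $d=3$ (e.g.\ $q=2$). There, for $\mu<\mu_c$ one has exactly $u_\mu(\bx)=\mu^{\frac{1}{2(q-1)}}v_0(\sqrt\mu\,\bx)$ with $v_0$ the NLS ground state, so $\|\nabla u_\mu\|_2^2=\mu^{\frac{q}{q-1}-\frac d2}\|\nabla v_0\|_2^2\to0$ (the exponent $\frac{q}{q-1}-\frac d2=\frac{d-q(d-2)}{2(q-1)}$ is positive iff $q<\frac{d}{d-2}$), and $\mu\Lambda(\mu)$ vanishes at \emph{the same} rate. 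The two terms in your identity balance to leading order, and the sign of their difference can only be read off from the limiting profile: one needs $\cF_1(v_0)=\frac{1}{q-1}\bigl(q-\frac{d+2}{d}\bigr)\|\nabla v_0\|_2^2>0$, which is again a Pohozaev identity, now for $v_0$. Relatedly, your Gagliardo--Nirenberg argument for (ii) only produces the chain $\|\nabla u_\mu\|_2^2\le C\int u_\mu^{2r}\le C'\|\nabla u_\mu\|_2^{2r\theta}\Lambda(\mu)^{r(1-\theta)}$, which after absorbing (using $r\theta<1$) is an \emph{upper} bound on $\|\nabla u_\mu\|_2$, not a lower bound. So the $\mu\to0$ case genuinely requires identifying the limiting object in each of the three regimes (subcritical: explicit scaling to $v_0$; supercritical: zero-mass limit; critical: Aubin--Talenti profile), as the paper does in the appendix; your proposed shortcut does not close this.
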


We note that the continuity of $\mu \mapsto \Lambda(\mu)$ is a standard fact which can be proved as in~\cite{KilOhPoc-17}.

\subsection{Existence of a minimizer for $\lambda = \lambda_c$}

We now go back to our minimization problem $J_1(\lambda)$. Recall that in Lemma~\ref{lem:existence_minimizer_bosonic_lambda>lambda_c}, we have proved that $J_1(\lambda)$ has a minimizer for all $\lambda > \lambda_c$, where $\lambda_c$ was defined in~\eqref{eq:def:lambdac}.

\begin{lemma} \label{lem:lambda_c}
    The critical value $\lambda_c$ is strictly positive. Moreover, \\ 
    For all $0 < \lambda < \lambda_c$, we have $J_1(\lambda) = 0$, and the problem has no minimizers. \\
    For all $\lambda = \lambda_c$, we have $J_1(\lambda_c) = 0$, and the problem has a minimizer.
\end{lemma}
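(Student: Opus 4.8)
The statement has three parts; I treat them in order, the existence of a minimizer at $\lambda=\lambda_c$ being the only delicate point.

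\emph{Step 1: $\lambda_c>0$.} By the scaling identity $J_1(\lambda)=\lambda^{(d-2)/d}\,\widetilde J_{\lambda^{2/d}}(1)$ obtained in the proof of Lemma~\ref{lem:props-J}, it suffices to show $\widetilde J_\beta(1)=0$ for all small $\beta>0$; since $\widetilde J_\beta(1)\le 0$ always, only a lower bound is needed. I will prove that there is $C_F=C_F(d,q,r)$ with
\[
    \int_{\R^d}\bigl|F_1(v^2)\bigr|\le C_F\,\|\nabla v\|_{L^2}^2
    \qquad\text{for every }v\in H^1(\R^d),\ \|v\|_{L^2}=1 .
\]
Split the integral over $\{v^2\le 1\}$ and $\{v^2\ge 1\}$: on the first set $|F_1(v^2)|=|v|^{2q}\le|v|^{2(d+2)/d}$ (as $|v|\le1$ and $q>\tfrac{d+2}{d}$), and the critical Gagliardo--Nirenberg inequality gives $\int|v|^{2(d+2)/d}\le K\|\nabla v\|_{L^2}^2\|v\|_{L^2}^{4/d}=K\|\nabla v\|_{L^2}^2$; on the second set $|F_1(v^2)|\le(\fa+\fb+\fc)|v|^{2r}$, and since $1<r<\tfrac{d+2}{d}<\tfrac{d}{d-2}$ one controls $\int|v|^{2r}$ by the subcritical Gagliardo--Nirenberg inequality (which gives $\|\nabla v\|_{L^2}^{d(r-1)}$ with $d(r-1)<2$) when $\|\nabla v\|_{L^2}\ge1$, and by the Sobolev inequality $\int_{\{v^2\ge1\}}|v|^{2r}\le\int|v|^{2^*}\le C\|\nabla v\|_{L^2}^{2^*}$ (with $2^*/2>1$) when $\|\nabla v\|_{L^2}\le1$, so in both ranges it is $\le C\|\nabla v\|_{L^2}^2$ (in $d=2$ replace $2^*$ by any $p>2$). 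Consequently $\widetilde\cE_\beta(v)\ge(1-\beta C_F)\|\nabla v\|_{L^2}^2\ge0$ for $\beta\le 1/C_F$, hence $\widetilde J_\beta(1)=0$ there, hence $J_1(\lambda)=0$ for $\lambda\le(1/C_F)^{d/2}$, so $\lambda_c\ge(1/C_F)^{d/2}>0$.

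\emph{Step 2: $0<\lambda<\lambda_c$.} That $J_1(\lambda)=0$ is immediate from $J_1\le0$ (Lemma~\ref{lem:general_facts}) and the definition~\eqref{eq:def:lambdac} of $\lambda_c$. Suppose $J_1(\lambda)$ had a minimizer $u$; then $u\in H^1(\R^d)$, $\|u\|_{L^2}^2=\lambda$ and $\cF_1(u)=0$, i.e. $\int F_1(u^2)=-\|\nabla u\|_{L^2}^2$. I use two deformations. The mass--preserving dilation $u_s(\bx):=s^{d/2}u(s\bx)$ keeps $\|u_s\|_{L^2}^2=\lambda$, so $s\mapsto\cF_1(u_s)$ has a \emph{global} minimum at $s=1$; differentiating there (which is legitimate because $u\in L^2\cap L^{2r}$ and $|F_1'(t)|\le qt^{q-1}$ on $[0,1]$, $|F_1'(t)|\le\fb+\fc r\,t^{r-1}$ on $[1,\infty)$) gives $(d+2)\|\nabla u\|_{L^2}^2+d\!\int u^2F_1'(u^2)=0$. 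The amplitude scaling $t\mapsto tu$ has $\|tu\|_{L^2}^2=t^2\lambda<\lambda_c$ for $t$ in a neighbourhood of $1$ (this is where $\lambda<\lambda_c$ is used), so $t\mapsto\cF_1(tu)$ has a \emph{local} minimum at $t=1$, giving $2\|\nabla u\|_{L^2}^2+2\!\int u^2F_1'(u^2)=0$. Eliminating $\int u^2F_1'(u^2)$ between the two identities yields $\tfrac2d\|\nabla u\|_{L^2}^2=0$, so $u$ is constant, hence $u\equiv0$ since $u\in L^2$, contradicting $\lambda>0$.

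\emph{Step 3: $\lambda=\lambda_c$.} By continuity of $J_1$ (Lemma~\ref{lem:props-J}) and $J_1\equiv0$ on $(0,\lambda_c)$, we get $J_1(\lambda_c)=0$. For existence, pick $\lambda_n\downarrow\lambda_c$ and, by Lemma~\ref{lem:existence_minimizer_bosonic_lambda>lambda_c}, radial decreasing minimizers $u_n$ of $J_1(\lambda_n)$, solving~\eqref{eq:EL_mu} with multiplier $\mu_n\ge0$. The case $\mu_n=0$ is excluded by a Pohozaev identity (it would give $\cF_1(u_n)\ge0$, against $J_1(\lambda_n)<0$), so $\mu_n>0$ and by uniqueness (Proposition~\ref{prop:existence_ground_state}) $u_n=u_{\mu_n}$ with $\Lambda(\mu_n)=\lambda_n$. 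The multipliers stay in a compact subset of $(0,\infty)$: they are bounded since $\Lambda(\mu_n)=\lambda_n$ is bounded while $\Lambda(\mu)\to+\infty$ (Proposition~\ref{prop:mu1_mu2}), and bounded away from $0$ since $\cF_1(u_\mu)>0$ for $\mu<\mu_{\rm min}$ (Proposition~\ref{prop:mu1_mu2}) whereas $\cF_1(u_{\mu_n})=J_1(\lambda_n)<0$; pass to a subsequence with $\mu_n\to\mu_*\in(0,\infty)$. Being bounded in $H^1_{\rm rad}(\R^d)$, the $u_n$ converge (Strauss' theorem, as in Lemma~\ref{lem:existence_minimizer_bosonic_lambda>lambda_c}) along a further subsequence weakly in $H^1$, strongly in $L^{2r}$, pointwise and $L^{2r}$--dominated, to some radial $w\ge0$ solving~\eqref{eq:EL_mu} with $\mu=\mu_*$, and by weak lower semicontinuity of the Dirichlet term and dominated convergence for the $F_1$ term, $\cF_1(w)\le\lim\cF_1(u_n)=\lim J_1(\lambda_n)=0$.

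The one genuine obstacle is to exclude vanishing, i.e. $w=0$, and I do this with the Euler--Lagrange equation used quantitatively: testing~\eqref{eq:EL_mu} against $u_n$ gives $\|\nabla u_n\|_{L^2}^2=-\int u_n^2F_1'(u_n^2)-\mu_n\lambda_n$. If $w=0$ then $u_n\to0$ in $L^{2r}$, so $\int F_1(u_n^2)\to0$ and $\int u_n^2F_1'(u_n^2)\to0$ (both integrands being $\le C|u_n|^{2r}$, after using $|u_n|\le1$ on $\{u_n^2\le1\}$ and $r\le q$); the first forces $\|\nabla u_n\|_{L^2}^2=\cF_1(u_n)-\int F_1(u_n^2)\to0$, while the identity above forces $\|\nabla u_n\|_{L^2}^2\to-\mu_*\lambda_c<0$ — a contradiction. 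Hence $w\ne0$, so $w=u_{\mu_*}$ by uniqueness (Proposition~\ref{prop:existence_ground_state}), and $\|u_{\mu_*}\|_{L^2}^2=\Lambda(\mu_*)=\lim\Lambda(\mu_n)=\lambda_c$ by continuity of $\Lambda$ (Proposition~\ref{prop:mu1_mu2}). Thus $u_{\mu_*}$ is admissible for $J_1(\lambda_c)$ with $\cF_1(u_{\mu_*})\le0=J_1(\lambda_c)\le\cF_1(u_{\mu_*})$, so it is a minimizer. I expect the bookkeeping in Step~1 (the split Gagliardo--Nirenberg/Sobolev estimate) and the verification that the various differentiations-under-the-integral and weak limits are licit to be the routine-but-careful parts; the conceptual crux is the no-vanishing argument just described.
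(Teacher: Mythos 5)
Your proof is correct, and for the two parts that the paper actually writes out --- the lower bound $\lambda_c>0$ and the existence of a minimizer at $\lambda=\lambda_c$ --- it follows essentially the same route: a Gagliardo--Nirenberg coercivity estimate for the first (the paper gets it in one line from the global bound $|F_1(t)|\le M\,t^{\frac{d+2}{d}}$, which makes your two-region split and the $d=2$ caveat about the exponent $p$ unnecessary), and for the third the same scheme of approximating minimizers $u_{\mu_n}$ with multipliers trapped in a compact subset of $(0,\infty)$, with the same no-vanishing argument via the identity $\|\nabla u_n\|_2^2+\mu_n\lambda_n=-\int u_n^2F_1'(u_n^2)$. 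The genuine added value is your Step 2: the paper's proof never establishes non-existence of minimizers for $0<\lambda<\lambda_c$, even though it invokes exactly that fact at the end of its $\lambda=\lambda_c$ argument (``we deduce from the first point of the Lemma that $\lambda'=\lambda_c$''). Your combination of the mass-preserving dilation (a Pohozaev-type identity) with the amplitude perturbation $t\mapsto tu$ --- which is admissible as a local variation precisely because $t^2\lambda$ stays below $\lambda_c$ --- yields $\|\nabla u\|_{L^2}=0$ and closes that gap cleanly. A pleasant side effect is that in Step 3 you then do not even need that non-existence statement to identify the mass of the limit: you get $\|w\|_{L^2}^2=\Lambda(\mu_*)=\lim\Lambda(\mu_n)=\lambda_c$ directly from the uniqueness of the ground state and the continuity of $\Lambda$, whereas the paper argues through the relaxed problem. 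Both identifications are valid; yours is self-contained.
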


\begin{remark} \label{rem:rescaled_existence_bosonic}
    Going back to the original problem $J_\alpha(\lambda)$ by rescaling, we get that for all $\alpha \ge \alpha_c^J(\lambda)$, the problem $J_\alpha(\lambda)$ has a minimizer, while for $\alpha < \alpha_c^J(\lambda)$, the problem has no minimizers. 
\end{remark}

\begin{proof}  
     Let us first prove that $\lambda_c > 0$. The function $f: t \mapsto \frac{| F_1(t)|}{t^{\frac{d+2}{d}}}$ is continuous, positive, and satisfies $\lim_{t \to 0} f(t) = \lim_{t\to \infty} f(t) = 0$, so $f$ is bounded from above by a constant $M > 0$. On the other hand, we have the Gagliardo--Nirenberg inequality
     \[
        \| u \|_{L^\frac{2(d+2)}{d}} \le C \| u \|_{L^2}^\theta \| \nabla u \|_{L^2}^{1 - \theta}, \quad \text{with} \quad \frac{d}{2(d+2)} = \frac{\theta}{2} + (1 - \theta)\frac{d-2}{2d}, \quad \text{i.e.} \quad \theta = \frac{2}{d+2}.
     \]
    This gives
    \[
        \cF_1(u) \ge \| \nabla u \|_2^2 - M \int_{\R^d} | u |^{\frac{2(d+2)}{d}} \ge 
        \| \nabla u \|_2^2 \left( 1 - M C \lambda^{\frac{2}{d} }\right),
    \]
    which is positive if $\lambda < \frac{1}{MC}$. So $\lambda_c \ge \frac{1}{MC} > 0$.
    
    \medskip

    We now consider the case $\lambda = \lambda_c$. We follow the lines of~\cite{LewRot-20}. Consider a sequence $\lambda_n \to \lambda_c$ with $\lambda_n > \lambda_c$. Let $u_n$ be a positive radial decreasing minimizer for $\lambda_n$, so that $\cF_1(u_n) = J_1(\lambda_n)$ converges to $0$ as $n \to \infty$. Finally, let $\mu_n > 0$ be the corresponding Euler--Lagrange multiplier. Then, we must have $\Lambda(\mu_n) = \lambda_n < \lambda_1$. In particular, since $\Lambda(\cdot)$ is coercive by Proposition~\ref{prop:mu1_mu2}, there is $\mu_{\rm max} > 0$ so that $\Lambda(\mu) > \lambda_1$ for all $\mu > \mu_{\rm max}$. This proves that $0 < \mu_{\rm min} < \mu_n \le \mu_{\rm max} < \infty$, so the sequence $(\mu_n)$ is bounded, and, up to an undisplayed subsequence, we may assume that $\mu_n \to \mu_* \in [\mu_{\rm min}, \mu_{\rm max}]$. 
    
    \medskip
    
    The sequences $\cF_1(u_n)$ and $\| u_n \|_2$ are bounded. Using~\eqref{eq:F1_bounded_from_below}, this implies that the sequence $(u_n)$ is bounded in $H^1_{\rm rad}(\R^d)$. As in the proof of Lemma~\ref{lem:existence_minimizer_bosonic_lambda>lambda_c}, we deduce that $J_1(u_*) = 0$. It remains to prove that $u_* \neq 0$. Recall that
    \[
         - \Delta u_n + \mu_n u_n = -  u_n F_1'(u_n^2).
    \]
    Multiplying by $u_n$ and integrating gives
    \[
        \int_{\R^d} | \nabla u_n |^2 + \mu_n \lambda_n = - \int_{\R^d} | u_n |^2 F_1'(u_n^2), \quad \text{with} \quad
        t^2 F_1'(t^2) = \begin{cases}
          - q t^{2q} & \quad \text{if} \quad t \le 1 \\
          \fb t^2 - \fc r t^{2r} & \quad \text{if} \quad t \ge 1.  
        \end{cases}
    \]
    There is a constant $C > 0$ so that $t^2 | F_1'(t^2) | \le C t^{2r}$. Using a domination as in the proof of Lemma~\ref{lem:existence_minimizer_bosonic_lambda>lambda_c}, we have, up to a subsequence,
    \[
         0 > -\mu_* \lambda_c = -\lim_{n \to \infty} \mu_n \lambda_n \ge \lim_{n \to \infty}  \int_{\R^d} | u_n |^2 F_1'(u_n^2) =  \int_{\R^d} | u_* |^2 F_1'(u_*^2).
    \]
    So $u_* \neq 0$ as claimed. Let $\lambda' := \| u_* \|_2^2$. Then $0 < \lambda' \le \lambda_c$ be weak semi-continuity of the $L^2$ norm. Since $u_*$ is a minimizer for the $J_1(\lambda')$ problem, we deduce from the first point of the Lemma that $\lambda' = \lambda_c$. So $u_*$ is a non trivial minimizer for the $J_1(\lambda_c)$ problem. This optimizer satisfies $\cF_1(u_*) = 0$.
    \end{proof}


\section{The fermionic problem}
\label{sec:fermionic}

We now turn to the fermionic problem $I_\alpha(\lambda)$, which we recall is defined by
\[
    I_\alpha(\lambda) = \inf \left\{ \cE_\alpha(\gamma), \quad 0 \le \gamma \le 1, \quad \Tr(\gamma) = \lambda \right\}, \quad \text{with} \quad
    \cE_\alpha(\gamma) = \Tr( - \Delta \gamma) + \int_{\R^d} F_\alpha(\rho_\gamma).
\]
In the fermionic setting, we have the Pauli constraint $0 \le \gamma \le 1$. In particular, there is no equivalent to the scaling used in Lemma~\ref{lem:scaling_bosonic}. The problems $I_\alpha(\lambda)$ cannot be reduced to the study of the $\alpha = 1$ case, as we did in the bosonic case.

\subsection{Properties of $I_\alpha(\lambda)$}
\label{sec:fermions_Ialpha}

Since the functional $F_\alpha$ is concave, the map $\gamma \mapsto \cE_\alpha(\gamma)$ is concave as well. We are therefore in the setting studied in~\cite{Lew-11} (see also~\cite{GonLewNaz-21}). Before we go on, we record a classical result, although we were not able to find a reference for it. In what follows, we set, for $\lambda > 0$,
\begin{align*}
     \cP_\lambda := \left\{ \gamma \in \cS(L^2(\R^d)), \ 0 \le \gamma \le 1, \ \Tr(\gamma) = \lambda \right\}.
\end{align*}
For a convex set $K$, we denote by $\extreme(K)$ the set of extreme points of $K$, i.e. the set of points $p\in K$ such that, for all $x\neq y \in K$ and all $0 < t < 1$, we have $t x + (1 - t) y \neq p$. 

\begin{theorem}
    For all $\lambda > 0$, the set $\cP_\lambda$ is convex. In addition, writing $\lambda = N + t$ with $N \in \N$ and $t \in [0, 1)$, we have $\gamma \in \extreme(\cP_\lambda)$ iff $\gamma$ is of the form
    \begin{equation} \label{eq:extreme_gamma}
        \gamma  = \sum_{i=1}^{N} | \phi_i \rangle \langle \phi_i | + t | \phi_{N+1} \rangle \langle \phi_{N+1} |,
    \end{equation}
    with $(\phi_1, \cdots, \phi_{N+1})$ an orthonormal family in $L^2(\R^d)$. In particular, we have
    \[
        \cP_\lambda = (1-t) \cP_N + t \cP_{N+1}.
    \]
\end{theorem}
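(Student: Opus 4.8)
The convexity of $\cP_\lambda$ is immediate: if $0 \le \gamma_0, \gamma_1 \le 1$ with $\Tr(\gamma_i) = \lambda$, then for $s \in [0,1]$ the operator $s\gamma_1 + (1-s)\gamma_0$ is self-adjoint, lies between $0$ and $1$ (as a convex combination of operators with that property), and has trace $\lambda$ by linearity. So the content is the characterization of the extreme points. I would first handle the easy direction: show that any $\gamma$ of the form~\eqref{eq:extreme_gamma} is extreme in $\cP_\lambda$. Suppose such a $\gamma = \sum_{i=1}^N |\phi_i\rangle\langle\phi_i| + t|\phi_{N+1}\rangle\langle\phi_{N+1}|$ equals $\tfrac12(\gamma_0 + \gamma_1)$ with $\gamma_0,\gamma_1 \in \cP_\lambda$. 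Since $0 \le \gamma_j \le 1$, one has $0 \le \langle\psi,\gamma_j\psi\rangle \le \|\psi\|^2$ for all $\psi$. For any unit vector $\psi$ orthogonal to $\mathrm{span}(\phi_1,\dots,\phi_{N+1})$ we get $\langle\psi,\gamma\psi\rangle = 0$, hence $\langle\psi,\gamma_j\psi\rangle = 0$, so $\gamma_j$ vanishes on that orthocomplement; thus $\mathrm{ran}(\gamma_j) \subseteq V := \mathrm{span}(\phi_1,\dots,\phi_{N+1})$. Likewise, for $i \le N$, $\langle\phi_i,\gamma\phi_i\rangle = 1$ forces $\langle\phi_i,\gamma_j\phi_i\rangle = 1$, and since $\gamma_j \le 1$ this means $\phi_i$ is an eigenvector of $\gamma_j$ with eigenvalue $1$ (an operator $A$ with $0 \le A \le 1$ and $\langle\phi, A\phi\rangle = \|\phi\|^2$ must satisfy $A\phi = \phi$, by looking at $\langle\phi, (1-A)\phi\rangle = 0$ and $1-A \ge 0$). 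Hence on $V$ the operator $\gamma_j$ acts as the identity on $\mathrm{span}(\phi_1,\dots,\phi_N)$, and restricted to the line $\C\phi_{N+1}$ it is multiplication by some $t_j \in [0,1]$ with $\tfrac12(t_0+t_1) = t$ and $N + t_j = \Tr(\gamma_j) = \lambda = N + t$, forcing $t_0 = t_1 = t$ and $\gamma_0 = \gamma_1 = \gamma$.

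For the converse — every extreme point has the form~\eqref{eq:extreme_gamma} — I would argue by contraposition: take $\gamma \in \cP_\lambda$ not of that form, and exhibit a nontrivial decomposition. Diagonalize $\gamma = \sum_k \mu_k |\psi_k\rangle\langle\psi_k|$ with $\mu_k \in [0,1]$ and $\sum_k \mu_k = \lambda$. If $\gamma$ is not of the extremal form, then either (a) at least two eigenvalues lie strictly inside $(0,1)$, or (b) exactly one eigenvalue $\mu_{k_0}$ lies in $(0,1)$ but it is not forced to equal the fractional part $t$ because some other eigenvalue fails to be $0$ or $1$ — but eigenvalues are only $0$ or $1$ otherwise, so this case cannot arise; the genuine alternative is (a), possibly together with the trivial observation that $\gamma$ being finite-rank-plus-identity-type is automatic once all eigenvalues are $0/1$ except at most one. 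In case (a), pick two indices $j \ne k$ with $\mu_j, \mu_k \in (0,1)$, choose $\varepsilon > 0$ small enough that $\mu_j \pm \varepsilon$ and $\mu_k \mp \varepsilon$ all remain in $[0,1]$, and set $\gamma_\pm := \gamma \pm \varepsilon(|\psi_j\rangle\langle\psi_j| - |\psi_k\rangle\langle\psi_k|)$. Both lie in $\cP_\lambda$ (trace is preserved, spectrum stays in $[0,1]$), they are distinct, and $\gamma = \tfrac12(\gamma_+ + \gamma_-)$, so $\gamma$ is not extreme. One must also address the infinite-dimensional subtlety that the eigenvalue $1$ could occur with infinite multiplicity; but that would violate $\Tr(\gamma) = \lambda < \infty$, so $\gamma$ has finite rank and only finitely many nonzero eigenvalues, and the argument is clean. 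This proves the iff.

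Finally, the identity $\cP_\lambda = (1-t)\cP_N + t\cP_{N+1}$: the inclusion $\supseteq$ is a direct check — if $\gamma = (1-t)\eta + t\zeta$ with $\eta \in \cP_N$, $\zeta \in \cP_{N+1}$, then $0 \le \gamma \le 1$ and $\Tr\gamma = (1-t)N + t(N+1) = N + t = \lambda$. For $\subseteq$, by the Krein--Milman theorem $\cP_\lambda$ is the closed convex hull of its extreme points (here we should note $\cP_\lambda$ is weak-$*$ compact: it is a bounded, weak-$*$ closed subset of the trace-class operators, or one argues directly that a convex combination of finitely many extreme points already suffices since any $\gamma \in \cP_\lambda$ is itself, by its spectral decomposition, an explicit convex combination of a rank-$N$ projection-type operator and a rank-$(N+1)$ one). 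Concretely: given $\gamma \in \cP_\lambda$ with eigenvalues $\mu_1 \ge \mu_2 \ge \cdots$ summing to $N + t$, I would write $\gamma = (1-t)\eta + t\zeta$ where $\eta$ is obtained by "rounding $\gamma$ down to mass $N$" and $\zeta$ by "rounding up to mass $N+1$" along a common eigenbasis — for instance, on eigenvectors one replaces the eigenvalue tuple $(\mu_k)$ by two tuples in $[0,1]^{\N}$, one with sum $N$ and one with sum $N+1$, whose $(1-t),t$-weighted average is $(\mu_k)$; such a pair always exists (this is a finite bookkeeping exercise, e.g. via a "majorization"/transport argument on the partial sums). The main obstacle is precisely this last splitting step — making the rounding explicit and checking both rounded operators genuinely satisfy $0 \le \cdot \le 1$ — but it is routine once one processes the eigenvalues in decreasing order and transfers mass greedily.
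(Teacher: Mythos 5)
Your argument is correct and, for the characterization of $\extreme(\cP_\lambda)$, follows the same route as the paper: non-extremality is shown by perturbing two eigenvalues lying strictly in $(0,1)$ (identical to the paper's $\gamma = \tfrac12(\gamma_\varepsilon + \gamma_{-\varepsilon})$ trick), while you additionally write out the converse direction in full (the paper dismisses it as ``a classical fact''); your argument there — forcing $\mathrm{ran}(\gamma_j) \subseteq V$ and $\gamma_j \phi_i = \phi_i$ from saturation of the constraints $0 \le \gamma_j \le 1$ — is complete and correct. Where you genuinely diverge is the inclusion $\cP_\lambda \subseteq (1-t)\cP_N + t\cP_{N+1}$: the paper deduces it from Krein--Milman after a finite-dimensional approximation, whereas your preferred route is a direct splitting of the eigenvalue sequence $(\mu_k)$ into two $[0,1]$-valued sequences of total mass $N$ and $N+1$ along a common eigenbasis. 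This is cleaner in that it avoids Krein--Milman and the compactness discussion altogether; it does work (set $\eta_k = \mu_k - t\delta_k$, $\zeta_k = \mu_k + (1-t)\delta_k$ with $\delta_k \in [0,\min(\mu_k/t,\,(1-\mu_k)/(1-t))]$ and $\sum_k \delta_k = 1$; one checks the caps always sum to at least $1$), but you leave this verification as ``routine bookkeeping'', so strictly speaking it is the one step you have not actually carried out.

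Two side remarks you should fix, though neither undermines the proof. First, $\Tr(\gamma) < \infty$ does \emph{not} imply that $\gamma$ has finite rank (eigenvalues $2^{-n}$ give a counterexample); it only implies that the eigenvalue $1$ has finite multiplicity, which is all you need, and finite rank does follow in the case you are actually in (at most one eigenvalue in $(0,1)$). Second, your parenthetical claim that $\cP_\lambda$ is weak-$*$ compact is false: the trace is not weak-$*$ continuous and mass can escape to infinity (translates of a fixed rank-$N$ projector converge weakly to $0$), which is exactly why the paper resorts to finite-dimensional truncation before invoking Krein--Milman. Your direct splitting argument sidesteps this issue entirely, so you should simply delete the compactness claim rather than rely on it.
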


\begin{proof}
    Consider $\gamma \in \cP_\lambda$, and assume that $\gamma$ has at least two non integer eigenvalues $0 < \nu_1, \nu_2 < 1$. Then one can write $\gamma$ as the combination $\gamma = \frac12 (\gamma_\varepsilon + \gamma_{-\varepsilon})$ for $\varepsilon > 0$ small enough, where $\gamma_s$ denotes the operator with the same structure as $\gamma$ but with the eigenvalue shifts $\nu_1 \to \nu_1 + s$ and $\nu_2 \to \nu_2 - s$. So any such $\gamma$ is not an extreme point. The fact that operators of the form~\eqref{eq:extreme_gamma} are extreme points is a classical fact.
    
    \medskip
    
    For the second part, we first note that any extreme point of the form~\eqref{eq:extreme_gamma} can be written as 
    \[
    \gamma = (1 - t) \gamma_N + t \gamma_{N+1}, \quad \text{with} \quad \gamma_N = \sum_{i=1}^{N} | \phi_i \rangle \langle \phi_i | \in \cP_N, \quad \gamma_{N+1} = \gamma_N + | \phi_{N+1} \rangle \langle \phi_{N+1} | \in \cP_{N+1}.
    \]
    So $\extreme(\cP_\lambda) \subset  (1 - t) \cP_N + t \cP_{N+1}$. Taking the convex-hull on both sides, and using the Krein--Milman theorem shows that $\cP_\lambda \subset  (1 - t) \cP_N + t \cP_{N+1}$. The other inclusion is straightforward. Technically speaking, the Krein--Milman theorem only applies in finite dimensions, so one should first replace $L^2(\R^d)$ by $E \subset L^2(\R^d)$ of finite dimension, then take the limit $E \to L^2(\R^d)$. We leave the details to the reader (see also the argument in~\cite{Lie-81}).
\end{proof}

The following result is purely geometrical, and explains why quantization happens in concave problems. We record the following.

\begin{lemma}
    ~ 
    \begin{enumerate}
        \item \underline{Case $\lambda = N \in \N$}. If $I_\alpha(N)$ has a minimizer, then $I_\alpha(N)$ also has a minimizer which is a projector of rank $N$.
        \item \underline{Case $N < \lambda < N+1$}. If $\lambda$ is of the form $\lambda = N +t$ with $t \in (0, 1)$, then $\lambda = (1-t) N + t (N+1)$ and
        \begin{equation} \label{eq:concavity_between_N_and_N+1}
            I_\alpha(\lambda) \ge (1-t) I_\alpha(N) + t I_\alpha(N+1).
        \end{equation}
        In addition, if $I_\alpha(\lambda)$ has a minimizer $\gamma$, then $I_\alpha(\lambda)$ also has a minimizer of the form~\eqref{eq:extreme_gamma}.
    \end{enumerate}
\end{lemma}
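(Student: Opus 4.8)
The plan is to leverage two facts established above: the functional $\gamma\mapsto\cE_\alpha(\gamma)$ is concave (the kinetic term is linear and $F_\alpha$ is concave), and, for $\lambda=N+t$, the convex set $\cP_\lambda$ decomposes as $\cP_\lambda=(1-t)\cP_N+t\,\cP_{N+1}$, with $\extreme(\cP_\lambda)$ consisting precisely of the operators of the form \eqref{eq:extreme_gamma}.

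First I would establish the inequality \eqref{eq:concavity_between_N_and_N+1}. Given any $\gamma\in\cP_\lambda$, write $\gamma=(1-t)\gamma_N+t\,\gamma_{N+1}$ with $\gamma_N\in\cP_N$ and $\gamma_{N+1}\in\cP_{N+1}$ thanks to the previous theorem; concavity of $\cE_\alpha$ gives $\cE_\alpha(\gamma)\ge(1-t)\cE_\alpha(\gamma_N)+t\,\cE_\alpha(\gamma_{N+1})\ge(1-t)I_\alpha(N)+t\,I_\alpha(N+1)$, and taking the infimum over $\gamma$ yields \eqref{eq:concavity_between_N_and_N+1}. I would also record the by-product that if $I_\alpha(\lambda)$ admits a minimizer $\gamma$, then all the inequalities above are equalities, so \eqref{eq:concavity_between_N_and_N+1} is saturated and the components $\gamma_N$, $\gamma_{N+1}$ are themselves minimizers of $I_\alpha(N)$ and $I_\alpha(N+1)$.

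For the statement about the shape of the minimizers — both in the case $\lambda=N$, where \eqref{eq:extreme_gamma} with $t=0$ is a rank-$N$ projector, and in the case $\lambda=N+t$ — I would argue by ``pushing eigenvalues to the boundary''. Let $\gamma=\sum_i\nu_i|\phi_i\rangle\langle\phi_i|$ be a minimizer. An operator in $\cP_\lambda$ is extreme, i.e. of the form \eqref{eq:extreme_gamma}, if and only if it has at most one eigenvalue in $(0,1)$ (this follows from the theorem together with the constraint $\Tr\gamma=\lambda$), so if $\gamma$ is not extreme it must have two such eigenvalues $\nu_a,\nu_b\in(0,1)$. Consider the affine path $\gamma_s:=\gamma+s\bigl(|\phi_a\rangle\langle\phi_a|-|\phi_b\rangle\langle\phi_b|\bigr)$ for $s\in[s_-,s_+]$ with $s_+:=\min(1-\nu_a,\nu_b)>0$ and $s_-:=-\min(\nu_a,1-\nu_b)<0$; this path stays in $\cP_\lambda$. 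The map $s\mapsto\cE_\alpha(\gamma_s)$ is concave and attains its minimum at the interior point $s=0$, hence is constant on $[s_-,s_+]$; in particular $\gamma_{s_+}$ is again a minimizer, and by the choice of $s_+$ it has strictly fewer eigenvalues in $(0,1)$ than $\gamma$. Iterating this reduction produces a minimizer with at most one eigenvalue in $(0,1)$, that is, one of the desired form (a projector when $\lambda=N$, since then the trace constraint forbids even one fractional eigenvalue).

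The hard part will be the termination of this iteration, since a priori $\gamma$ could carry infinitely many eigenvalues in $(0,1)$ accumulating at $0$. I would deal with this either geometrically, by running the reduction inside a fixed finite-dimensional subspace and then passing to the limit as in \cite{Lew-11}, or analytically, by invoking the Euler--Lagrange equation satisfied by a minimizer, which forces the fractional part of $\gamma$ to be a finite-rank spectral projection of a Schrödinger operator with a decaying potential. Everything else is an immediate consequence of concavity and of the description of $\extreme(\cP_\lambda)$ obtained above.
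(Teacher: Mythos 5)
Your argument for \eqref{eq:concavity_between_N_and_N+1} is exactly the paper's: decompose $\gamma=(1-t)\gamma_N+t\gamma_{N+1}$ via $\cP_\lambda=(1-t)\cP_N+t\cP_{N+1}$ and use concavity of $\cE_\alpha$. For the structure of minimizers, the paper simply invokes the principle that a concave functional minimized over a convex set also has a minimizer among the extreme points; your eigenvalue--transport argument (the affine path $\gamma_s=\gamma+s(|\phi_a\rangle\langle\phi_a|-|\phi_b\rangle\langle\phi_b|)$, constancy of a concave function with an interior minimum, then pushing $s$ to $s_+$) is precisely the standard mechanism behind that principle, so the two routes are essentially the same; yours is more explicit but must confront the termination issue when $\gamma$ has infinitely many eigenvalues in $(0,1)$. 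You flag this honestly, and your proposed fix (run the argument in a finite--dimensional subspace and pass to the limit, as in \cite{Lew-11,Lie-81}) matches the paper's own caveat about Krein--Milman in the preceding theorem, so the level of rigor is comparable.

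One genuine error to remove: the ``by-product'' you record is false as stated. The existence of a minimizer $\gamma$ of $I_\alpha(\lambda)$ gives $I_\alpha(\lambda)=\cE_\alpha(\gamma)\ge(1-t)\cE_\alpha(\gamma_N)+t\,\cE_\alpha(\gamma_{N+1})\ge(1-t)I_\alpha(N)+tI_\alpha(N+1)$, but nothing forces these inequalities to be equalities: the reverse inequality $I_\alpha(\lambda)\le(1-t)I_\alpha(N)+tI_\alpha(N+1)$ is not available (concavity of $\cE_\alpha$ goes the wrong way for building test states by convex combination), and strict concavity of $F_\alpha$ would generically make the first inequality strict. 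In particular you cannot conclude that $\gamma_N$ and $\gamma_{N+1}$ are minimizers of $I_\alpha(N)$ and $I_\alpha(N+1)$ — that would wrongly imply that well-posedness at a fractional mass forces well-posedness at the two neighbouring integers. Since this remark is not used in the rest of your proof, deleting it leaves the argument intact.
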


The inequality~\eqref{eq:concavity_between_N_and_N+1} states that the map $\lambda \mapsto I_\alpha(\lambda)$ is piece-wise concave: it is concave on all intervals of the form $[N, N+1]$.

\begin{proof}
    We minimize the concave functional $\cE_\alpha$ on the convex set $\cP_\lambda$. If a minimizer exists in $\cP_\lambda$, then there is also a minimizer in $\extreme(\cP_\lambda)$, hence of the form~\eqref{eq:extreme_gamma}. This directly proves the result. We note that since $\cP_\lambda = (1-t) \cP_N + t \cP_{N+1}$, any $\gamma \in \cP_\lambda$ can be written of the form
    \[
        \gamma = (1 -t) \gamma_N + t \gamma_{N+1}, \quad \text{with} \quad \gamma_N \in \cP_N, \ \gamma_{N+1} \in \cP_{N+1}.
    \]
    In particular, the concavity of $\cE_\alpha$ implies that 
    \[
        \cE_\alpha(\gamma) \ge (1-t) \cE_\alpha(\gamma_N) + t \cE_\alpha(\gamma_{N+1}) \ge (1 - t) \cI_\alpha(N) + t \cI_\alpha(N+1).
    \]
    Taking the infimum on $\gamma \in \cP_\lambda$ shows the inequality~\eqref{eq:concavity_between_N_and_N+1}.
\end{proof}

\subsection{Existence of a minimizer}

We now study the existence of minimizers for the problem $I_\alpha(\lambda)$. In what follows, we focus on the case where $\lambda = N$ is an integer, and use the following result which is proved in~\cite{Lew-11}.
\begin{lemma} \label{lem:strong_binding_Lewin}
    Let $N \in \N$ with $N \ge 2$. If the following strong binding inequality holds
    \begin{equation} \label{eq:strong_binding_integers}
        \forall K \in \{1, 2, \cdots, N -1\},  \qquad I_\alpha(N) < I_\alpha(N-K)  + I_\alpha(K),
    \end{equation}
    then the problem $I_\alpha(N)$ has a minimizer.
\end{lemma}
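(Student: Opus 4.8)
The plan is to combine two ingredients: first, the observation that~\eqref{eq:strong_binding_integers} already forces $I_\alpha(N)<0$ (indeed, for any $K\in\{1,\dots,N-1\}$ one has $I_\alpha(K)\le0$ and $I_\alpha(N-K)\le0$ by Lemma~\ref{lem:general_facts}, so $I_\alpha(N)<I_\alpha(N-K)+I_\alpha(K)\le0$); second, the standard concentration--compactness analysis for concave one--body functionals of~\cite{Lew-11}, whose hypothesis is the \emph{real} binding inequality $I_\alpha(N)<I_\alpha(\mu)+I_\alpha(N-\mu)$ for all $\mu\in(0,N)$. The key point --- and the reason concavity matters --- is that this real binding inequality can be deduced from the integer one~\eqref{eq:strong_binding_integers}.

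To carry out that reduction, fix $\mu\in(0,N)\setminus\Z$ and write $\mu=P+s$ with $P\in\{0,\dots,N-1\}$, $s\in(0,1)$, so that $N-\mu=(N-P-1)+(1-s)$. Since $\lambda\mapsto I_\alpha(\lambda)$ is concave on each interval $[M,M+1]$ by~\eqref{eq:concavity_between_N_and_N+1}, we get
\[
    I_\alpha(\mu)\ge(1-s)I_\alpha(P)+sI_\alpha(P+1),\qquad I_\alpha(N-\mu)\ge sI_\alpha(N-P-1)+(1-s)I_\alpha(N-P),
\]
hence, after adding and regrouping,
\[
    I_\alpha(\mu)+I_\alpha(N-\mu)\ \ge\ (1-s)\big[I_\alpha(P)+I_\alpha(N-P)\big]+s\big[I_\alpha(P+1)+I_\alpha(N-P-1)\big].
\]
Each bracket is $\ge I_\alpha(N)$: if the integer at stake lies in $\{1,\dots,N-1\}$ this is~\eqref{eq:strong_binding_integers} (strictly), and if it is $0$ or $N$ it is an equality since $I_\alpha(0)=0$. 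Moreover at least one bracket is strict --- the first if $P\ge1$, the second if $P=0$ (here $P+1=1\in\{1,\dots,N-1\}$ because $N\ge2$) --- and both weights $(1-s),s$ are positive, so $I_\alpha(\mu)+I_\alpha(N-\mu)>I_\alpha(N)$, as wanted.

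It then remains to run the concentration--compactness scheme with the real binding inequality in hand. Starting from a minimizing sequence $(\gamma_n)\subset\cP_N$, the bound $F_\alpha(t)\ge-Ct^r$, the Lieb--Thirring inequality $\int_{\R^d}\rho_\gamma^{1+2/d}\lesssim\Tr(-\Delta\gamma)$ for $0\le\gamma\le1$, interpolation with $\int\rho_{\gamma_n}=N$, and the hypothesis $r<\tfrac{d+2}{d}$ (which makes the power of $\Tr(-\Delta\gamma_n)$ produced by interpolation strictly less than $1$) give, via Young's inequality, a uniform bound on $\Tr(-\Delta\gamma_n)$. One extracts a weak limit $\gamma$ with $0\le\gamma\le1$, $\Tr(\gamma)=:\lambda'\le N$, and $\rho_{\gamma_n}\to\rho_\gamma$ in $L^r_{\mathrm{loc}}(\R^d)$ and a.e.\ (using that $\sqrt{\rho_{\gamma_n}}$ is bounded in $H^1$). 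The geometric method of~\cite{Lew-11} then yields, up to translations and a further subsequence, the usual alternative: either $(\gamma_n)$ is relatively compact --- in which case $\lambda'=N$, $\gamma\in\cP_N$, and $\gamma$ minimizes $I_\alpha(N)$ by weak lower semicontinuity of $\Tr(-\Delta\,\cdot\,)$ together with dominated convergence for $\int F_\alpha(\rho_{\gamma_n})$ (bound $|F_\alpha(t)|\le Ct^r$, $L^r_{\mathrm{loc}}$ convergence, $H^1$ domination) --- or the mass splits into $k\ge2$ asymptotically decoupled lumps of masses $\mu_1,\dots,\mu_k>0$ with $\sum_i\mu_i=N$, and then superadditivity of the energy under splitting combined with the weak binding inequality of Lemma~\ref{lem:general_facts} forces $I_\alpha(N)=I_\alpha(\mu_1)+I_\alpha(N-\mu_1)$ with $0<\mu_1<N$, contradicting the real binding inequality; the residual ``spreading'' case is excluded because it would give $\liminf_n\cE_\alpha(\gamma_n)\ge0>I_\alpha(N)$. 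Hence a minimizer exists.

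The main obstacle is this last step: making the dichotomy rigorous for density matrices --- the $P$--localization of $\Tr(-\Delta\,\cdot\,)$ via an IMS-type formula, the localization and subadditive splitting of $\int F_\alpha(\rho_{\gamma_n})$ (which uses $F_\alpha(a+b)\le F_\alpha(a)+F_\alpha(b)$, valid since $F_\alpha$ is concave with $F_\alpha(0)=0$), and the bookkeeping of escaping mass. This is precisely what is done in~\cite{Lew-11} (and in~\cite{GonLewNaz-21} for a close variant), so I would simply quote it; the genuinely new input is the concavity reduction of the second paragraph. A secondary point is to have enough regularity of $\lambda\mapsto I_\alpha(\lambda)$ to pass to the limit in the split energies, which follows from the piecewise concavity~\eqref{eq:concavity_between_N_and_N+1}, the monotonicity of $I_\alpha$, and the weak binding inequality --- or again from~\cite{Lew-11}.
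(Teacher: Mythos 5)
Your proposal is correct and follows essentially the same route as the paper: the paper also quotes~\cite{Lew-11} for the concentration--compactness step (real binding inequality $\Rightarrow$ existence) and deduces the real binding inequality from the integer one~\eqref{eq:strong_binding_integers} via the piecewise concavity~\eqref{eq:concavity_between_N_and_N+1}, exactly as in your second paragraph. Your treatment is in fact slightly more careful than the paper's in spelling out the edge case $P=0$ (where one bracket is only an equality because $I_\alpha(0)=0$ and strictness comes from the other bracket).
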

The fact that strong binding inequality implies existence of minimizer is a standard technique in concentration compactness theory~\cite{Lio-84}. Let us emphasize that the usual inequality is over real numbers, that is
 \begin{equation} \label{eq:strong_binding_real}
    \forall \lambda \in (0, N), \qquad I_\alpha(N) < I_\alpha(N-\lambda)  + I_\alpha(\lambda).
\end{equation}
The fact that one can restrict to integers in~\eqref{eq:strong_binding_integers} comes from the inequality~\eqref{eq:concavity_between_N_and_N+1}. Indeed, consider any $\lambda \in (0, N)$, and write $\lambda = M + t$ with $M \in \{ 0, 1, \cdots, N-1\}$ and $t \in (0, 1)$, so that $N - \lambda = N -M - t  = (N - M - 1) + (1 - t)$. Inequality~\eqref{eq:concavity_between_N_and_N+1} shows that
\begin{align*}
    I_\alpha(\lambda) & \ge (1-t) I_\alpha(M) + t I_\alpha(M+1)\\
    I_\alpha(N - \lambda) & \ge  t I_\alpha(N-M-1) + (1 - t) I_\alpha(N - M)
\end{align*}
so if one assumes~\eqref{eq:strong_binding_integers}, we get, by summation,
\[
    I_\alpha(\lambda) + I_\alpha(N - \lambda) \ge (1 -t) \left[I_\alpha(M) + I_\alpha(N - M)\right] + t \left[I_\alpha(M+1) +   I_\alpha(N-M-1) \right] > I_\alpha(N).
\]
This proves that for concave functionals,~\eqref{eq:strong_binding_integers} implies~\eqref{eq:strong_binding_real}. Checking the strong binding inequalities over integers is enough to conclude  the existence of minimizers. Note that there are only a finite number of inequalities to check. This is one of the main reason why we have considered a concave functional at the very beginning.

\medskip

In particular, we note that $I_\alpha(N = 2)$ has a minimizer whenever
\begin{equation} \label{eq:strict_binding_I2}
    I_\alpha(2) < 2 I_\alpha(1),
\end{equation}
and there is only one inequality to check.

\medskip

Recall that $\alpha_c^I(\lambda)$ was defined in Lemma~\ref{lem:critical_alpha}. In what follows, we restrict our attention to integers, and we denote by $\alpha_c^{(N)} := \alpha_c^I(\lambda = N)$. It is the smallest value for which $I_\alpha(N) = 0$ for $\alpha < \alpha_c^{(N)}$, while $I_\alpha(N) < 0$ for $\alpha > \alpha_c^{(N)}$. According to Lemma~\ref{lem:critical_alpha}, we have $\alpha_c^{(1)} = \alpha_c^J(1) > 0$. In addition, since $N \mapsto I_\alpha(N)$ is non--increasing, we have 
\[
    \alpha_c^{(1)} \ge  \alpha_c^{(2)} \ge  \alpha_c^{(3)} \ge \cdots \ge  0.
\]

\begin{theorem} 
    \label{th:main_fermionic}
   ~
    \begin{itemize}
        \item The sequence $\alpha_c^{(N)}$ is bounded from below by a strictly positive constant, that is
        \[
            \alpha_c^{(\infty)} := \inf_N \alpha_c^{(N)}  = \lim_{N \to \infty} \alpha_c^{(N)} \quad \text{satisfies} \quad
            \alpha_c^{(\infty)} > 0.
        \]
        \item For all $\alpha > \alpha_c^{(\infty)}$, there is $N \in \N$ so that $I_\alpha(N)$ has a minimizer.
        \item Assume in addition $q < 2$. Then we have  $\alpha_c^{(2)} < \alpha_c^{(1)}$.
    \end{itemize}
\end{theorem}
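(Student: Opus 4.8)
For the uniform positivity of $(\alpha_c^{(N)})$ I would show there is $\alpha_0>0$ with $I_\alpha(N)=0$ for all $\alpha\le\alpha_0$ and all $N$. By Lemma~\ref{lem:rescaling_beta} it suffices to find $\beta_0>0$ with $\widetilde I_\beta(\lambda)=0$ for all $\beta\le\beta_0$ and $\lambda>0$. As in the proof of Lemma~\ref{lem:lambda_c}, $t\mapsto |F_1(t)|\,t^{-(d+2)/d}$ is continuous, positive and vanishes at $0$ and $+\infty$, so $F_1(t)\ge -Mt^{1+2/d}$ for some $M>0$; combining this with the Lieb--Thirring kinetic-energy inequality $\int_{\R^d}\rho_\gamma^{1+2/d}\le C_{\rm LT}\Tr(-\Delta\gamma)$, valid for all $0\le\gamma\le1$, gives $\widetilde{\cE}_\beta(\gamma)\ge(1-\beta M C_{\rm LT})\Tr(-\Delta\gamma)\ge0$ once $\beta\le\beta_0:=(MC_{\rm LT})^{-1}$, hence $\widetilde I_\beta(\lambda)=0$ (recall $\widetilde I_\beta\le0$ always). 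Thus $\alpha_c^{(N)}\ge\alpha_0:=\beta_0^{1/(q-(d+2)/d)}>0$ for all $N$, so $\alpha_c^{(\infty)}\ge\alpha_0>0$, and $\alpha_c^{(\infty)}=\lim_N\alpha_c^{(N)}$ by monotonicity. For the second point, fix $\alpha>\alpha_c^{(\infty)}$ and set $m:=\min\{N\ge1:\ I_\alpha(N)<0\}$, finite since $\alpha>\alpha_c^{(N_0)}$ for some $N_0$. If $m=1$, then $\alpha>\alpha_c^{(1)}=\alpha_c^J(1)$ and $I_\alpha(1)=J_\alpha(1)$ has a minimizer $u$ (Lemma~\ref{lem:lambda_c}, Remark~\ref{rem:rescaled_existence_bosonic}); since $|u\rangle\langle u|$ has trace $1$ it satisfies $0\le\gamma\le1$ and minimizes $I_\alpha(1)$. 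If $m\ge2$ and $I_\alpha(m)$ had no minimizer, then by Lemma~\ref{lem:strong_binding_Lewin} inequality~\eqref{eq:strong_binding_integers} fails, so $I_\alpha(m)\ge I_\alpha(m-K)+I_\alpha(K)$ for some $K\in\{1,\dots,m-1\}$; with the weak binding inequality this forces equality, and since $1\le K,\,m-K\le m-1$, minimality of $m$ makes the right-hand side $0$, contradicting $I_\alpha(m)<0$. Hence $I_\alpha(m)$ has a minimizer.

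\textbf{Third point: the trial state.} Since always $\alpha_c^{(2)}\le\alpha_c^{(1)}$, I argue by contradiction assuming $\alpha_c^{(2)}=\alpha_c^{(1)}=:\alpha_*$. For $\alpha<\alpha_*$ one has $I_\alpha(2)=0$, so by continuity of $\alpha\mapsto I_\alpha(2)$ (Lemma~\ref{lem:rescaling_beta}) also $I_{\alpha_*}(2)=0$; I will contradict this by exhibiting a fermionic state of strictly negative energy. At $\alpha=\alpha_*=\alpha_c^J(1)$, the problem $J_{\alpha_*}(1)$ has a minimizer $\chi$ with $\cF_{\alpha_*}(\chi)=0$ (Lemma~\ref{lem:lambda_c}, Remark~\ref{rem:rescaled_existence_bosonic}); via Lemma~\ref{lem:scaling_bosonic} it is a rescaling of the $J_1(\lambda_c)$-minimizer, so by Proposition~\ref{prop:existence_ground_state} it is positive, radial decreasing, of unit mass, solves $-\Delta\chi+F_{\alpha_*}'(\chi^2)\chi+\mu\chi=0$ for some $\mu>0$, and obeys two-sided bounds of the form $c_-(1+|\bx|^{(d-1)/2})^{-1}\re^{-\kappa|\bx|}\le\chi(\bx)\le c_+(1+|\bx|^{(d-1)/2})^{-1}\re^{-\kappa|\bx|}$ together with $|\nabla\chi|(\bx)\le c_+(1+|\bx|^{(d-1)/2})^{-1}\re^{-\kappa|\bx|}$, for some $\kappa>0$. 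For $R$ large, put $\chi_\pm(\bx):=\chi(\bx\mp\tfrac R2\be_1)$ (reflections of one another across $\{x_1=0\}$), $\ell:=\langle\chi_+,\chi_-\rangle\in(0,1)$, and let $w_s,w_a$ be the $L^2$-normalised even and odd combinations of $\chi_++\chi_-$ and $\chi_+-\chi_-$; these are orthonormal by parity in $x_1$, so $\gamma_R:=|w_s\rangle\langle w_s|+|w_a\rangle\langle w_a|$ is an admissible rank-two test state with $\Tr\gamma_R=2$, and, using $\|\chi_+\pm\chi_-\|_2^2=2(1\pm\ell)$, its density is $\rho_{\gamma_R}=w_s^2+w_a^2=(\chi_+^2+\chi_-^2-2\ell\,\chi_+\chi_-)/(1-\ell^2)$.

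\textbf{Third point: the energy.} I would then show
\[
\cE_{\alpha_*}(\gamma_R)=\mathcal{D}_R+O(\ell^2),\qquad
\mathcal{D}_R:=\int_{\R^d}\!\Big[F_{\alpha_*}(\chi_+^2+\chi_-^2)-F_{\alpha_*}(\chi_+^2)-F_{\alpha_*}(\chi_-^2)\Big].
\]
For the kinetic part, $\Tr(-\Delta\gamma_R)=\big(2\|\nabla\chi\|_2^2-2\ell\langle\nabla\chi_+,\nabla\chi_-\rangle\big)/(1-\ell^2)$, and rewriting $\langle\nabla\chi_+,\nabla\chi_-\rangle=\langle\chi_+,-\Delta\chi_-\rangle$ with the Euler--Lagrange equation (and boundedness of $F_{\alpha_*}'$ on the fixed bounded range of the densities) gives $|\langle\nabla\chi_+,\nabla\chi_-\rangle|=O(\ell)$, hence $\Tr(-\Delta\gamma_R)=2\|\nabla\chi\|_2^2+O(\ell^2)$. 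For the interaction, $\int F_{\alpha_*}(\chi_\pm^2)=\int F_{\alpha_*}(\chi^2)=-\|\nabla\chi\|_2^2$ since $\cF_{\alpha_*}(\chi)=0$, and as $\|\rho_{\gamma_R}-(\chi_+^2+\chi_-^2)\|_{L^1}=O(\ell^2)$, the mean value theorem (using only boundedness of $F_{\alpha_*}'$, not of $F_{\alpha_*}''$, which blows up at $0$ when $q<2$) yields $\int F_{\alpha_*}(\rho_{\gamma_R})=-2\|\nabla\chi\|_2^2+\mathcal{D}_R+O(\ell^2)$; adding up gives the displayed formula. Now $\mathcal{D}_R<0$ because $F_{\alpha_*}$ is concave with $F_{\alpha_*}(0)=0$ and $\chi_\pm>0$; restricting the defining integral to a fixed box around the origin, where $\chi_+^2,\chi_-^2$ are comparable and of size of order $R^{-(d-1)}\re^{-\kappa R}<\alpha_*$ (so $F_{\alpha_*}(t)=-t^q$ there) and using $(a+b)^q-a^q-b^q\ge c(a^q+b^q)$ for comparable $a,b>0$, one gets $|\mathcal{D}_R|\ge c\,R^{-q(d-1)}\re^{-q\kappa R}$. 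Finally the decay bounds give $\ell\le CR^{\sigma}\re^{-\kappa R}$, so $\ell^2=O(R^{2\sigma}\re^{-2\kappa R})$; since $q<2$ the term $\mathcal{D}_R$ dominates, so $\cE_{\alpha_*}(\gamma_R)<0$ for $R$ large, whence $I_{\alpha_*}(2)\le\cE_{\alpha_*}(\gamma_R)<0$, contradicting $I_{\alpha_*}(2)=0$. Therefore $\alpha_c^{(2)}<\alpha_c^{(1)}$.

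\textbf{Main obstacle.} The first two points are soft. The real work is the third: squeezing the kinetic cross term down to $O(\ell)$ via the Euler--Lagrange equation, keeping all remainders at $O(\ell^2)=O(\re^{-2\kappa R})$, and — the crux — producing the matching lower bound $|\mathcal{D}_R|\ge c\,R^{-q(d-1)}\re^{-q\kappa R}$ for the negative nonlinear overlap gain, which requires the two-sided exponential decay of $\chi$ from Proposition~\ref{prop:existence_ground_state}. The hypothesis $q<2$ enters precisely at the end, ensuring $\re^{-q\kappa R}$ beats $\re^{-2\kappa R}$ so that $\mathcal{D}_R$ overwhelms the sign-indefinite $O(\ell^2)$ corrections.
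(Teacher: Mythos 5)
Your proof is correct and follows essentially the same route as the paper: Lieb--Thirring plus the bound $F_1(t)\ge -Mt^{(d+2)/d}$ for the uniform lower bound on $\alpha_c^{(N)}$, the strong binding inequality over integers for existence, and two far-separated copies of the critical bosonic minimizer whose concave overlap term $\sim R^{-q(d-1)}\re^{-q\kappa R}$ beats the $O(\re^{-2\kappa R})$ corrections when $q<2$. The only differences are presentational (contradiction framing, symmetric/antisymmetric orthonormalization instead of the Gram-matrix square root, and the Euler--Lagrange equation rather than the pointwise gradient decay to control the kinetic cross term), and your remark about avoiding $F_{\alpha}''$, which blows up at $0$ for $q<2$, is exactly the care the paper's estimate also requires.
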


The first statement implies that for $\alpha$ small enough, we have $I_\alpha( \lambda) = 0$ for all $\lambda > 0$. The system is never stable, no matter the number of fermions. In some sense, the self--interaction is not strong enough to counterpart the kinetic energy of the fermions.

As a corollary of the last statement, we deduce Theorem~\ref{th:main_fermionic_intro}, which we restate as follows
\begin{corollary}
    Assume $q < 2$, so that $\alpha_c^{(2)} < \alpha_c^{(1)}$. Then for all $\alpha \in (\alpha_c^{(2)} , \alpha_c^{(1)})$, the problem $I_\alpha(2)$ has a minimizer, while $I_\alpha(1)$ does not.
\end{corollary}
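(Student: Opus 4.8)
The plan is to read the corollary off directly from Theorem~\ref{th:main_fermionic}, the definitions of the critical thresholds $\alpha_c^{(N)}$, and the binding criterion of Lemma~\ref{lem:strong_binding_Lewin}. Under the standing hypothesis $q<2$, the third bullet of Theorem~\ref{th:main_fermionic} gives $\alpha_c^{(2)}<\alpha_c^{(1)}$, so the interval $(\alpha_c^{(2)},\alpha_c^{(1)})$ is non-empty; it is moreover bounded, since $\alpha_c^{(1)}=\alpha_c^J(1)<\infty$ by the bosonic analysis ($0<\lambda_c<\infty$, cf.\ Lemma~\ref{lem:lambda_c} and the relation $\alpha_c^J(\lambda)=(\lambda_c/\lambda)^{\frac{2}{d}\frac{1}{q-(d+2)/d}}$). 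Fix $\alpha$ in this interval.

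For the existence half, I would use the defining properties of the thresholds: $\alpha>\alpha_c^{(2)}$ forces $I_\alpha(2)<0$, while $\alpha<\alpha_c^{(1)}$ forces $I_\alpha(1)=0$. Combining these yields $I_\alpha(2)<0=2I_\alpha(1)$, which is exactly the strict binding inequality~\eqref{eq:strict_binding_I2}. Applying Lemma~\ref{lem:strong_binding_Lewin} with $N=2$ then produces a minimizer for $I_\alpha(2)$.

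For the non-existence half, I would transfer the statement to the bosonic problem. Since $\alpha<\alpha_c^{(1)}=\alpha_c^J(1)$, Remark~\ref{rem:rescaled_existence_bosonic} (equivalently, Lemma~\ref{lem:lambda_c} combined with the rescaling of Lemma~\ref{lem:scaling_bosonic}) gives $J_\alpha(1)=0$ and tells us that $J_\alpha(1)$ has \emph{no} minimizer. Suppose, for contradiction, that $I_\alpha(1)$ had a minimizer $\gamma$. By Lemma~\ref{lem:general_facts} we have $I_\alpha(1)=J_\alpha(1)=0$, and the admissible set $\{0\le\gamma\le 1,\ \Tr\gamma=1\}$ of $I_\alpha(1)$ is contained in the admissible set $\{0\le\gamma,\ \Tr\gamma=1\}$ of $J_\alpha(1)$; hence $\gamma$ would be an admissible state for $J_\alpha(1)$ attaining its infimum, i.e.\ a minimizer of $J_\alpha(1)$ — a contradiction. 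Therefore $I_\alpha(1)$ has no minimizer, and the corollary follows.

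I do not expect a genuine obstacle in the corollary itself: it is a formal consequence of Theorem~\ref{th:main_fermionic}, all the real difficulty being hidden in that theorem — in particular in its third bullet $\alpha_c^{(2)}<\alpha_c^{(1)}$, whose proof must exhibit, for some $\alpha$ strictly below $\alpha_c^{(1)}$, a two-particle trial state of negative energy (the tunnelling mechanism, in the spirit of~\cite{GonLewNaz-21} and~\cite{LewRot-20}). The only mild point requiring care in the corollary is that non-existence of $I_\alpha(1)$ cannot be argued purely within the fermionic framework; one imports it from the bosonic theory through the identity $J_\alpha(1)=I_\alpha(1)$.
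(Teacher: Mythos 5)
Your proposal is correct and follows essentially the same route as the paper: the existence half via $I_\alpha(2)<0=2I_\alpha(1)$ and Lemma~\ref{lem:strong_binding_Lewin}, and the non-existence half via the identification $I_\alpha(1)=J_\alpha(1)$ (Lemma~\ref{lem:general_facts}) together with Remark~\ref{rem:rescaled_existence_bosonic}. The only difference is that you spell out the transfer of non-existence from the bosonic to the fermionic problem slightly more explicitly than the paper does, which is harmless.
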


\begin{proof}[Proof of the Corollary and of Theorem~\ref{th:main_fermionic_intro}]
    Recall that for $N = 1$, the fermionic and bosonic are equal. Since $\alpha < \alpha_c^{(1)} = \alpha_c^J(1)$, we deduce that $I_\alpha(1) = J_\alpha(1)$ does not have a  minimizer (see Remark~\ref{rem:rescaled_existence_bosonic}). On the other hand, since $\alpha_c^{(2)} < \alpha < \alpha_c^{(1)}$, we have $I_\alpha(2) < 0 = I_\alpha(1)$, so the strict binding inequality~\eqref{eq:strict_binding_I2} is satisfied, and we deduce that the $N = 2$ problem has a minimizer by Lemma~\ref{lem:strong_binding_Lewin}.
\end{proof}

Note that since we have assumed $\frac{d+2}{d} < q$, the condition $q < 2$ can only happen in dimension $d \ge 3$.  We are not able to prove that $\alpha_c^{(\infty)} < \alpha_c^{(2)}$ however. We conjecture the following.

\begin{conjecture}
    If $q < 2$, then all inequalities are strict, that is 
    \[
        \alpha_c^{(1)} > \alpha_c^{(2)} > \alpha_c^{(3)} > \cdots >  \alpha_c^{(\infty)} > 0.
    \]
\end{conjecture}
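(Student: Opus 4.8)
The plan is to prove the three assertions in turn; the first two are soft, while the third rests on an explicit two--bump trial state.

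\emph{(i) Positivity of $\alpha_c^{(\infty)}$.} The equality $\alpha_c^{(\infty)}=\inf_N\alpha_c^{(N)}=\lim_N\alpha_c^{(N)}$ is immediate since $(\alpha_c^{(N)})_N$ is non--increasing; the content is that the limit is positive. The key point is that, $F_1$ being supercritical at $0$ and subcritical at infinity, the constant $M:=\sup_{s>0}|F_1(s)|\,s^{-(d+2)/d}$ is finite; by the scaling~\eqref{eq:scaling_Falpha} this gives the pointwise bound $F_\alpha(t)\ge -M\alpha^{q-\frac{d+2}{d}}\,t^{\frac{d+2}{d}}$ for every $t\ge 0$. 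I would combine this with the Lieb--Thirring inequality $\Tr(-\Delta\gamma)\ge c_{\mathrm{LT}}\int_{\R^d}\rho_\gamma^{\frac{d+2}{d}}$, valid for any $0\le\gamma\le 1$, to obtain
\[
    \cE_\alpha(\gamma)\ \ge\ \Bigl(c_{\mathrm{LT}}-M\alpha^{q-\frac{d+2}{d}}\Bigr)\int_{\R^d}\rho_\gamma^{\frac{d+2}{d}}\ \ge\ 0
    \qquad\text{as soon as }\ \alpha\le\alpha_0:=\Bigl(\tfrac{c_{\mathrm{LT}}}{M}\Bigr)^{\!\frac{1}{q-(d+2)/d}},
\]
where $q>\frac{d+2}{d}$ is used. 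For such $\alpha$ one gets $I_\alpha(\lambda)\ge 0$ for all $\lambda$, hence $I_\alpha(\lambda)=0$ by Lemma~\ref{lem:general_facts}, so $\alpha_c^{(N)}\ge\alpha_0>0$ for every $N$.

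\emph{(ii) A stable $N$ exists for $\alpha>\alpha_c^{(\infty)}$.} If $\alpha>\alpha_c^{(1)}$, then $J_\alpha(1)$ has a rank--one minimizer (Remark~\ref{rem:rescaled_existence_bosonic}), which also minimizes $I_\alpha(1)$, and we are done with $N=1$; so I may assume $\alpha\le\alpha_c^{(1)}$, i.e. $I_\alpha(1)=0$. Suppose, for contradiction, that the strong binding inequality~\eqref{eq:strong_binding_integers} fails for every $N\ge 2$. By the weak binding inequality of Lemma~\ref{lem:general_facts}, such a failure forces, for each $N\ge 2$, the existence of $K\in\{1,\dots,N-1\}$ with $I_\alpha(N)=I_\alpha(K)+I_\alpha(N-K)$; an immediate induction from $I_\alpha(1)=0$ then yields $I_\alpha(N)=0$ for all $N$, hence $\alpha_c^{(N)}\ge\alpha$ for all $N$ and $\alpha_c^{(\infty)}\ge\alpha$ --- a contradiction. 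Thus~\eqref{eq:strong_binding_integers} holds for some $N\ge 2$, and $I_\alpha(N)$ has a minimizer by Lemma~\ref{lem:strong_binding_Lewin}.

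\emph{(iii) $\alpha_c^{(2)}<\alpha_c^{(1)}$ when $q<2$.} Since $\alpha\mapsto I_\alpha(2)$ is continuous and non--increasing (Lemma~\ref{lem:rescaling_beta}), it suffices to prove $I_{\alpha^*}(2)<0$ for $\alpha^*:=\alpha_c^{(1)}$. Let $u_*$ be the minimizer of $J_{\alpha^*}(1)$ furnished by Lemma~\ref{lem:lambda_c} (rescaled through Lemma~\ref{lem:scaling_bosonic}): it has $\|u_*\|_2^2=1$ and $\cF_{\alpha^*}(u_*)=0$, is --- up to scaling --- a ground state of~\eqref{eq:EL_mu} with some multiplier $\mu_*>0$, hence satisfies $u_*(\bx)\asymp(1+|\bx|^{\frac{d-1}{2}})^{-1}\re^{-\nu|\bx|}$ and $|\nabla u_*|(\bx)\lesssim(1+|\bx|^{\frac{d-1}{2}})^{-1}\re^{-\nu|\bx|}$ for some $\nu>0$ (Proposition~\ref{prop:existence_ground_state}). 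For $R$ large set $v_R:=u_*(\cdot-R\be_1)$ and let $P_R$ be the orthogonal projector onto $\mathrm{span}\{u_*,v_R\}$, so that $0\le P_R\le 1$, $\Tr P_R=2$, and $I_{\alpha^*}(2)\le\cE_{\alpha^*}(P_R)$. With $c:=\langle u_*,v_R\rangle$, the symmetric/antisymmetric basis of $P_R$ gives the closed forms $\rho_{P_R}=(u_*^2+v_R^2-2c\,u_*v_R)/(1-c^2)$ and $\Tr(-\Delta P_R)=2\bigl(\|\nabla u_*\|_2^2-c\langle\nabla u_*,\nabla v_R\rangle\bigr)/(1-c^2)$. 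Using $\cF_{\alpha^*}(u_*)=0$ and expanding in the exponentially small quantities $c$ and $\langle\nabla u_*,\nabla v_R\rangle$, I would obtain
\[
    \cE_{\alpha^*}(P_R)=\cC_R+\mathcal O\!\bigl(\re^{-2\nu R}R^{B}\bigr),
    \qquad
    \cC_R:=\int_{\R^d}\!\Bigl[F_{\alpha^*}(u_*^2+v_R^2)-F_{\alpha^*}(u_*^2)-F_{\alpha^*}(v_R^2)\Bigr],
\]
for some $B$, the error gathering all contributions that carry a factor $c$ or $\langle\nabla u_*,\nabla v_R\rangle$ --- both $\mathcal O(\re^{-\nu R}R^{B'})$ by the pointwise decay of $u_*$ and $\nabla u_*$, so that, being at least quadratic in these, the error is $\mathcal O(\re^{-2\nu R}R^{B})$. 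Now $\cC_R<0$: $F_{\alpha^*}$ is $C^2$ with $F_{\alpha^*}''<0$ and $F_{\alpha^*}(0)=0$, hence strictly concave and strictly subadditive, so the integrand is $<0$ wherever $u_*v_R>0$, i.e. almost everywhere --- this is the tunnelling gain between the two bumps. To compare $\cC_R$ with the error, restrict the integral to a ball around the mid--point $\tfrac R2\be_1$, where $u_*^2\asymp v_R^2\asymp R^{-(d-1)}\re^{-\nu R}\to 0$ and $F_{\alpha^*}(s)=-s^q$ for $s$ small; using $(a+b)^q-a^q-b^q\ge(q-1)\min(a,b)^q$ one gets $-\cC_R\gtrsim R^{-q(d-1)}\re^{-q\nu R}$. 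Since $q<2$ this dominates the $\mathcal O(\re^{-2\nu R}R^{B})$ error for $R$ large, so $\cE_{\alpha^*}(P_R)<0$, whence $I_{\alpha^*}(2)<0$ and $\alpha_c^{(2)}<\alpha_c^{(1)}$.

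I expect the main obstacle to be the third step: making the expansion of $\cE_{\alpha^*}(P_R)$ rigorous and, above all, controlling $|\cC_R|$ from below against the orthogonalisation error --- it is here, and only here, that $q<2$ enters --- which requires the sharp pointwise decay of $u_*$ and $\nabla u_*$ together with some care in the Laplace asymptotics of the overlap integrals.
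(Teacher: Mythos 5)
What you have written is, in substance, a proof of Theorem~\ref{th:main_fermionic}, not of the Conjecture. Your three steps reproduce the three bullet points of that theorem: (i) positivity of $\alpha_c^{(\infty)}$ via Lieb--Thirring and the bound $|F_1(t)|\le M t^{(d+2)/d}$, (ii) existence of a stable $N$ for every $\alpha>\alpha_c^{(\infty)}$, and (iii) the two--bump trial state giving $\alpha_c^{(2)}<\alpha_c^{(1)}$ when $q<2$. All three arguments are correct and essentially match the paper's (your induction in (ii) is a mild variant of the paper's choice of the first $N$ with $\alpha>\alpha_c^{(N)}$, and your subadditivity bound $(a+b)^q-a^q-b^q\ge(q-1)\min(a,b)^q$ replaces the paper's monotonicity argument, but the substance is the same). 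The statement you were asked to prove, however, is the Conjecture, whose content is that \emph{every} inequality in the chain $\alpha_c^{(1)}\ge\alpha_c^{(2)}\ge\alpha_c^{(3)}\ge\cdots\ge\alpha_c^{(\infty)}$ is strict. The paper explicitly leaves this open (``We are not able to prove that $\alpha_c^{(\infty)}<\alpha_c^{(2)}$ however''), and your proposal does not address $\alpha_c^{(2)}>\alpha_c^{(3)}$ or any of the subsequent inequalities.

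The gap is not cosmetic: the two--bump mechanism of your step (iii) does not extend to $N\ge 2$. To show $\alpha_c^{(N+1)}<\alpha_c^{(N)}$ you would need $I_\alpha(N+1)<0$ at $\alpha=\alpha_c^{(N)}$, where $I_\alpha(N)=I_\alpha(N-1)=\cdots=I_\alpha(1)=0$. The natural trial state glues a zero--energy minimizer of $I_{\alpha_c^{(N)}}(N)$ to one extra orbital far away, but (a) existence of a minimizer at the threshold coupling $\alpha_c^{(N)}$ is unknown for $N\ge2$ --- the strict binding inequality degenerates there ($I_\alpha(N)=0=I_\alpha(K)+I_\alpha(N-K)$), so Lemma~\ref{lem:strong_binding_Lewin} gives nothing, and the threshold--existence argument of Lemma~\ref{lem:lambda_c} relies on the one--body scaling and Euler--Lagrange structure that has no fermionic analogue; and (b) the extra orbital would be a single particle at a coupling $\alpha<\alpha_c^{(1)}$ for which the one--body problem has no bound state, so there is no decaying profile to transplant and no multiplier $\mu>0$ to drive the exponential decay estimates on which your Laplace asymptotics of the overlap integrals rest. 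The closest the paper gets is Theorem~\ref{th:existence_of_minimizers_for_infinitely_many_N}, which doubles an existing minimizer to obtain $I_\alpha(2N)<2I_\alpha(N)$; this yields stability for infinitely many $N$ at a fixed $\alpha>\alpha_c^{(\infty)}$, not the strict separation of consecutive thresholds that the Conjecture asserts.
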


This conjecture would imply that there are values of $\alpha$ for which the $N$--particle fermionic problem has a minimizer, while the $k$--particle problem has not, for all $k \in \{ 1, 2, \cdots, N-1\}$.

\begin{proof}[Proof of Theorem~\ref{th:main_fermionic}]
    For the first part, we use the Lieb-Thirring inequality~\cite{LieThi-75,LieThi-76}, which states that there is an optimal constant $K_d > 0$ so that
    \[
        \forall 0 \le \gamma = \gamma^* \le 1, \qquad \Tr( - \Delta \gamma ) \ge K_d \int_{\R^d} \rho_\gamma^{\frac{d+2}{d}}.
    \]
   Together with Lemma~\ref{lem:rescaling_beta}, this gives the lower bound (we set $\beta = \alpha^{q - \frac{d+2}{2}}$ and $\widetilde{\rho}(\bx) = \alpha \rho(\alpha^{1/d}\bx)$).
    \[
        \cE_\alpha(\gamma) = \alpha^{2/d} \widetilde{\cE}_\beta(\gamma) \geq \alpha^{2/d} \int_{\R^d} \left[ K_d \widetilde{\rho}^{\frac{d+2}{d}} + \beta F_1(\widetilde{\rho}) \right] \ge \inf_{\rho \ge 0}  \alpha^{2/d} \int_{\R^d} \left[ K_d {\rho}^{\frac{d+2}{d}} + \beta F_1({\rho}) \right].
    \]
    The function $f: t \mapsto \frac{| F_1(t)|}{t^{\frac{d+2}{d}}}$ is continuous, positive, and satisfies $\lim_{t \to 0} f(t) = \lim_{t\to \infty} f(t) = 0$, so $f$ is bounded from above by a constant $M > 0$. For $\beta < \frac{K_d}{M}$, the term in bracket is non--negative, and vanishes only for $\rho = 0$. This proves that $\cE_\alpha(\gamma) \ge 0$ for all $0 \le \gamma = \gamma^* \le 1$ when $\alpha$ is small enough.

    \medskip
    
    For the second point, we note that if $\alpha > \alpha_c^{(\infty)}$, then there is $N \in \N$ so that $\alpha_c^{(N-1)} \ge \alpha > \alpha_c^{(N)}$ (we set $\alpha_c^{(0)} = \infty$).  If $N \ge 2$, then $I_\alpha(1) = I_\alpha(2) = \cdots = I_\alpha(N-1) = 0$ while $I_\alpha(N) < 0$ so the strong binding inequality~\eqref{eq:strong_binding_integers} is satisfied, and $I_\alpha(N)$ has a minimizer. If $N = 1$, we recall that the bosonic and fermionic problems coincide, so we have $\alpha > \alpha_c^{(1)} = \alpha_c^J(1)$, and we deduce that $I_\alpha(1) = J_\alpha(1)$ has a minimizer (see Remark~\ref{rem:rescaled_existence_bosonic}).
    
    \medskip
    
    It remains to prove the strict inequality $\alpha_c^{(2)} < \alpha_c^{(1)}$ under the condition $q < 2$. The proof is similar to the one in~\cite{GonLewNaz-21, FraGonLew-21a, FraGonLew-21c}.  Consider $\alpha \ge \alpha_c^{(1)} = \alpha_c^J(1)$, and let $u \in L^2(\R^d)$ with $\| u \|_{L^2} = 1$ be the corresponding radial decreasing optimizer of the bosonic problem $\cF_\alpha(u)$. In what follows, we are mainly interested in the case $\alpha = \alpha_c^{(1)}$, where we have $\cF_\alpha(u) = 0$. For $R > 0$ a separation parameter, we set
    \[
        u^{(-)}_R(\bx) := u\left( \bx - \frac{R}{2} \be_1 \right), \qquad u^{(+)}_R(\bx) := u\left( \bx + \frac{R}{2} \be_1 \right) \quad \text{and} \quad
        \rho_R^{(\pm)} := \left| u_R^{(\pm)} \right|^2.
    \]
    Finally, we denote by $\gamma_R$ the orthogonal projector on ${\rm Ran} \{ u_R^{(-)}, u_R^{(+)} \}$, and by $\rho_R$ its density. Note that since $u_R^{(-)}$ is not proportional to $u_R^{(+)}$, $\gamma_R$ is a projector of rank $2$, hence a valid test operator for the $I_\alpha(N = 2)$ problem. We claim that, for $R$ large enough, we have 
    \begin{equation} \label{eq:I2<2I1}
        I_\alpha(2) \le \cE_\alpha(\gamma_R) <  2 I_\alpha(1).
    \end{equation}
    This will imply that the strong binding inequality~\eqref{eq:strong_binding_integers} holds for $N = 2$, hence that $I_\alpha(2)$ has a minimizer. In particular, for $\alpha = \alpha_c^{(1)}$, we get $I_\alpha(2) < 2 I_\alpha(1) = 0$, so $\alpha_c^{(2)} < \alpha_c^{(1)}$ as claimed.
    
    \medskip
    
    It remains to prove~\eqref{eq:I2<2I1}. One can write explicitly $\gamma_R$ using the Gram matrix
    \[
        G_R := \begin{pmatrix}
                1 & \varepsilon_R \\
                \varepsilon_R & 1
            \end{pmatrix}, \quad \text{with} \quad \varepsilon_R :=  \langle u_R^{(-)}, u_R^{(+)} \rangle.
    \]
    We have $\varepsilon_R \to 0$ (since the function $u$ is radial decreasing), so for $R$ large enough, the matrix $G_R$ is positive definite, and we can set 
    \[
        S_R := G_R^{-1/2} =  \begin{pmatrix}
            a_R & b_R \\
            b_R & a_R
        \end{pmatrix} = \begin{pmatrix}
            1 & - \frac{\varepsilon_R}{2} \\
            - \frac{\varepsilon_R}{2} & 1
        \end{pmatrix} + O(\varepsilon_R^2).
    \]
    Then, we have
    \begin{align*}
        \gamma_R 
        & = |  u_R^{(-)} \rangle \langle  u_R^{(-)} | +  |  u_R^{(+)} \rangle \langle  u_R^{(+)} | - \varepsilon_R \left( |  u_R^{(-)} \rangle \langle  u_R^{(+)} | + |  u_R^{(+)} \rangle \langle  u_R^{(-)} |   \right) +O(\varepsilon_R^2).
    \end{align*}
    and 
    \[
        \rho_R 
         = \left( \rho_R^{(-)} + \rho_R^{(+)} \right) - 2 \varepsilon_R  \sqrt{ \rho_R^{(-)}  \rho_R^{(+)}} + O_{L^1 \cap L^\infty}(\varepsilon_R^2).
    \]
  We obtain that
  \begin{align*}
    \cE_\alpha(\gamma_R) & = \Tr( - \Delta \gamma_R) + \int_{\R^d} F_\alpha(\rho_R) \\
    & = 2 I_\alpha(1) - 2 \varepsilon_R \int_{\R^d} \nabla  u_R^{(-)} \cdot \nabla  u_R^{(+)} +  \int_{\R^d} \left[ F_\alpha({\rho}_R) - F_\alpha(\rho_R^{(-)}) - F_\alpha(\rho_R^{(+)})\right] + O(\varepsilon_R^2).
  \end{align*}
  Now,~\eqref{eq:I2<2I1} follows from the following estimates. 

  \begin{lemma} \label{lem:estimates}
      There is $C \ge 0$ and $\mu>0$ so that
      \[
            \varepsilon_R \le C \re^{- \sqrt{| \mu |} R}, \quad \left| \int_{\R^d} \nabla  u_R^{(-)} \cdot \nabla  u_R^{(+)} \right| \le C \re^{- \sqrt{| \mu |} R},
      \]
      and there is $c > 0$ so that
      \begin{equation} \label{eq:estimate_integral_Falpha}
        \int_{\R^d} \left[ F_\alpha({\rho}_R) - F_\alpha(\rho_R^{(-)}) - F_\alpha(\rho_R^{(+)})\right] < - c R^{-q(d-1)} \re^{- q \sqrt{| \mu |} R} \qquad < 0.
      \end{equation}
  \end{lemma}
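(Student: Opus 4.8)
The plan is to prove the three estimates in turn. Throughout, $u$ denotes the radial decreasing optimizer of $\cF_\alpha$ on the unit sphere appearing in the proof; by Lemma~\ref{lem:scaling_bosonic} it is, up to a fixed rescaling, a ground state of~\eqref{eq:EL_mu}, and being an optimizer it has, by Proposition~\ref{prop:mu1_mu2}, a Lagrange multiplier bounded below by $\mu_{\rm min}>0$. Hence, after transporting the bounds of Proposition~\ref{prop:existence_ground_state} through the scaling, there is $\mu>0$ and $0<c_-\le c_+<\infty$ with
\[
    \frac{c_-}{1+|\bx|^{\frac{d-1}{2}}}\,\re^{-\sqrt{\mu}|\bx|} \le u(\bx) \le c_+\,\re^{-\sqrt{\mu}|\bx|}, \qquad |\nabla u|(\bx) \le c_+\,\re^{-\sqrt{\mu}|\bx|}.
\]
I write $\ba:=\tfrac R2\be_1$, $\bb:=-\tfrac R2\be_1$ for the two centres, so that $\rho_R^{(-)}=u(\cdot-\ba)^2=:\rho_A$ and $\rho_R^{(+)}=u(\cdot-\bb)^2=:\rho_B$, and the midpoint is $0$.

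\emph{The overlap bounds.} Both $\varepsilon_R=\int_{\R^d}u(\bx-\ba)u(\bx-\bb)\,\rd\bx$ and $\int_{\R^d}\nabla u(\bx-\ba)\cdot\nabla u(\bx-\bb)\,\rd\bx$ are, in absolute value, at most $c_+^2\int_{\R^d}\re^{-\sqrt{\mu}(|\bx-\ba|+|\bx-\bb|)}\,\rd\bx$. Since $|\bx-\ba|+|\bx-\bb|\ge R$, and since incorporating the polynomial decay of $u$ near the segment $[\ba,\bb]$ compensates the volume of the tube around it, this integral is $\le C\re^{-\sqrt{\mu}R}$ for $d\ge 3$ (a standard tunnelling computation, cf.~\cite{GonLewNaz-21}). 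This gives the first two bounds of the lemma.

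\emph{The density estimate~\eqref{eq:estimate_integral_Falpha}.} Using $\rho_R=\rho_A+\rho_B+w_R$ with $\|w_R+2\varepsilon_R\sqrt{\rho_A\rho_B}\|_{L^1\cap L^\infty}=O(\varepsilon_R^2)$, I split the integrand into $[F_\alpha(\rho_R)-F_\alpha(\rho_A+\rho_B)]+[F_\alpha(\rho_A+\rho_B)-F_\alpha(\rho_A)-F_\alpha(\rho_B)]$. Since $\gamma_R$ is a rank-$2$ projector built from translates of $u$, all densities involved are bounded uniformly in $R$, so $F_\alpha$ is Lipschitz on the relevant range; a first-order Taylor estimate together with $\int_{\R^d}\sqrt{\rho_A\rho_B}=\varepsilon_R$ bounds the first bracket's integral by $C\varepsilon_R^2=O(\re^{-2\sqrt{\mu}R})$. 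For the second bracket, concavity of $F_\alpha$ and $F_\alpha(0)=0$ give $F_\alpha(a+b)\le F_\alpha(a)+F_\alpha(b)$, so the integrand is $\le 0$ everywhere and it suffices to bound its integral over the unit ball $B$ around $0$. On $B$ we have $|\bx-\ba|,|\bx-\bb|\in[\tfrac R2-1,\tfrac R2+1]$, hence $\rho_A(\bx),\rho_B(\bx)\in[c_1R^{-(d-1)}\re^{-\sqrt{\mu}R},\,c_2\re^{-\sqrt{\mu}R}]$; for $R$ large $\rho_A+\rho_B<\alpha$ there, so $F_\alpha=-t^q$ on the relevant range and the elementary inequality $(a+b)^q-a^q-b^q\ge(q-1)\min(a,b)^q$ ($a,b>0$, $q>1$) gives, on $B$,
\[
    F_\alpha(\rho_A+\rho_B)-F_\alpha(\rho_A)-F_\alpha(\rho_B)\le -(q-1)\bigl(c_1R^{-(d-1)}\re^{-\sqrt{\mu}R}\bigr)^q.
\]
Integrating over $B$ gives a bound $\le -c\,R^{-q(d-1)}\re^{-q\sqrt{\mu}R}$. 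Adding the $O(\re^{-2\sqrt{\mu}R})$ error and using that $q<2$ makes $\re^{-2\sqrt{\mu}R}$ negligible against $R^{-q(d-1)}\re^{-q\sqrt{\mu}R}$, estimate~\eqref{eq:estimate_integral_Falpha} follows for $R$ large, after shrinking $c$.

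\emph{Main obstacle.} The crux is the comparison of exponential rates in the last step: the off-diagonal corrections entering $\rho_R$ — and likewise the gradient cross-term $2\varepsilon_R\int_{\R^d}\nabla u(\cdot-\ba)\cdot\nabla u(\cdot-\bb)$ in the surrounding proof — are only of size $\varepsilon_R^2\sim\re^{-2\sqrt{\mu}R}$, and these must be dominated by the genuine self-interaction gain $\sim R^{-q(d-1)}\re^{-q\sqrt{\mu}R}$ localized near the midpoint. This domination is precisely the hypothesis $q<2$, and producing a sharp enough lower bound on that gain (from the pointwise lower bound on $u$ and the convexity inequality) is where the real work lies; the Lipschitz/Taylor handling of $w_R$ and the overlap bounds are routine by comparison.
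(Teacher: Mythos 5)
Your proof is correct and follows essentially the same route as the paper: pointwise exponential bounds on $u$ and $\nabla u$ from Proposition~\ref{prop:existence_ground_state}, a Yukawa-type tunnelling estimate for the overlaps, and concavity of $F_\alpha$ plus restriction to a unit ball at the midpoint, where lower bounds on the two densities and the superadditivity defect of $t\mapsto t^q$ yield the $R^{-q(d-1)}\re^{-q\sqrt{\mu}R}$ gain. The only (harmless) variations are that you control the replacement of $\rho_R$ by $\rho_R^{(-)}+\rho_R^{(+)}$ via a global Lipschitz bound rather than after localizing, and you use $(a+b)^q-a^q-b^q\ge(q-1)\min(a,b)^q$ in place of the paper's monotonicity argument; your explicit remark that $q<2$ is needed to absorb the $O(\re^{-2\sqrt{\mu}R})$ error is consistent with how the lemma is used in Theorem~\ref{th:main_fermionic}.
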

  In particular,  we obtain
  \[
    \cE_\alpha(\gamma_R) \leq 2 I_\alpha(1) - c R^{-q(d-1)}  \re^{- q \sqrt{| \mu |} R} + O \left(  \re^{- 2\sqrt{ | \mu |} R} \right).
\]
   In the case $q < 2$, the negative term is the leading order. We obtain as wanted that for $R$ large enough, we have $\cE_\alpha(\gamma_R) < 2 I_\alpha(1)$. 
\end{proof}
  
  We now give the proof of Lemma~\ref{lem:estimates}.
  \begin{proof}[Proof of Lemma~\ref{lem:estimates}]
  The function $u$ satisfies the Euler-Lagrange equation
      \[
      - \Delta u = - u F_\alpha'(u^2) - \mu u, \quad \text{for some}  \quad \mu  > 0.
      \]
  Note that the scaling here is different from the one used in the bosonic section, where we have only considered the case $\alpha = 1$.  According to (the rescaled version of) Proposition~\ref{prop:existence_ground_state}, there is $0 < c_- < c_+ < \infty$ so that, for all $\bx \in \R^d$, 
  \begin{equation} \label{eq:decay_u1}
      c_- \le \re^{\sqrt{\mu} | \bx |} (1 + | \bx |^{\frac{d-1}{2}}) u(\bx) \le c_+, 
      \quad  \re^{\sqrt{\mu} | \bx |} (1 + | \bx |^{\frac{d-1}{2}})  | \nabla u |(\bx) \le c_+.
  \end{equation}
   The first inequalities of Lemma~\ref{lem:estimates} follow directly from the upper bound for $u(\bx)$. One way to see this is to use the inequality $| u | \le C Y_{\sqrt{\mu}}$ where $Y_{\sqrt{\mu}}$ is the Yukawa potential, and $(Y_m*Y_m)(r)\sim rY_m(r)$ so that 
   \[
        | \varepsilon_R | \le \int_{\R^d}  | u_R^{(+)} | \cdot | u_R^{(-)} | = (| u | * | u |)(R) \le C \left( Y_{\sqrt{\mu}} * Y_{\sqrt{\mu}} \right)(R\be_1)\leq CE\frac{\re^{-R\sqrt{\mu}}}{1+R^{\frac{d-1}{2}}}\leq C \re^{-R\sqrt{\mu}}.
   \]
   
   Let us estimate the integral in~\eqref{eq:estimate_integral_Falpha}. Recall that $F_\alpha$ is a concave function with $F_\alpha(0) = 0$. In particular, it satisfies the inequality
   \[
    \forall a,b \in \R^+, \qquad F_\alpha(a + b) - F_\alpha(a) - F_\alpha(b) \le 0.
   \]
   The integrand in the LHS of~\eqref{eq:estimate_integral_Falpha} is therefore pointwise negative, and we can estimate its contribution by restricting the integral. 
   
   Since $u \to 0$ at infinity, there is $L > 0$ large enough so that $| u |^2(\bx) \le \frac{\alpha}{3}$ for all $| \bx | > L$. Then, there is $R_0 > 2L + 1$ such that, for all $R > R_0$ and all $\bx \in \cB(0, 1)$, we have $\rho_R^{(\pm)}(\bx) \le \frac{\alpha}{3}$,  and $\rho_R(\bx) \le \alpha$. We obtain
   \begin{align*}
    & \int_{\R^d} \left[ F_\alpha({\rho}_R) - F_\alpha(\rho_R^{(-)}) - F_\alpha(\rho_R^{(+)})\right]
     \le \int_{\cB(0, 1)} \left[ F_\alpha({\rho}_R) - F_\alpha(\rho_R^{(-)}) - F_\alpha(\rho_R^{(+)})\right] \\
     & \qquad = - \int_{\cB(0,1)} \left[\rho_R^q - (\rho_R^{(-)})^q - (\rho_R^{(+)})^q \right] \\
     & \qquad =  - \int_{\cB(0,1)} \left[(\rho_R^{(-)} + \rho_R^{(+)})^q - (\rho_R^{(-)})^q - (\rho_R^{(+)})^q \right] + O(\re^{-2 \sqrt{| \mu |} R}),
   \end{align*}
   where we have used estimates  as before for the last equality. The map $x,y \mapsto (x + y)^q - x^q - y^q$ is increasing both in $x$ and in $y$. In addition, the lower bound in~\eqref{eq:decay_u1} implies that there is $c > 0$ so that
   \[
    \forall \bx \in \cB(0, 1), \qquad \rho_R^{(\pm)}(\bx) \ge c \frac{\re^{- \sqrt{\mu} R}}{R^{(d-1)}}.
   \]
   We deduce that
   \begin{align*}
       - \int_{\cB(0,1)} \left[(\rho_R^{(-)} + \rho_R^{(+)})^q - (\rho_R^{(-)})^q - (\rho_R^{(+)})^q \right] \le - c^q (2^q - 2) | \cB(0, 1) |  \frac{\re^{- \sqrt{\mu} q R}}{ R^{q(d-1)}},
   \end{align*}
   which is strictly negative, since $q > 1$.
   
  \end{proof}

\subsection{Euler--Lagrange equations}
In this section, we focus on the case where the problem $I_\alpha(N)$ has a minimizer. We have provided general conditions for the existence of such minimizers in Theorem~\ref{th:main_fermionic}. We proved in particular that we must have $\alpha > \alpha_c^{(N)} \ge \alpha_c^{(\infty)} > 0$. 

Recall that the $k$-th min-max level $\mu_k$ of $H_*$ is the $k$-th lowest eigenvalue of $H_*$, counting multiplicities, if exists, and the bottom of the essential spectrum otherwise. 

\begin{lemma} 
    \label{lem:form_optimizer_fermionic}
    Assume that $I_\alpha(N)$ has a minimizer. Then, all minimizers $\gamma_*$ of $I_\alpha(N)$ are projectors, and satisfy the self--consistent equation
    \[
        \gamma_* = \1(H_* \le \mu_N), \quad \text{with} \quad H_* := - \Delta  + F_\alpha'(\rho_*).
    \]
    More specifically, the mean--field operator $H_*$ has at least $N$ negative eigenvalues satisfying
    \[
        -\mu_1 < -\mu_2 \le \cdots \le -\mu_N < -\mu_{N+1} \le 0. \quad \text{($\mu_k \ge 0$)}.
    \]
    If $(\phi_1, \cdots, \phi_N)$ is a corresponding orthonormal basis of eigenfunctions, then
    \[
        \gamma_* = \sum_{i=1}^N | \phi_i \rangle \langle \phi_i |, \quad \text{with} \quad
        \forall 1 \le i \le N, \qquad \left( - \Delta + F_\alpha' (\rho_*)\right) \phi_i = - \mu_i \phi_i.
    \]
\end{lemma}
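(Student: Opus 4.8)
The plan is to argue in three stages: first reduce to extreme-point minimizers, then derive the variational structure, and finally identify it with the spectral projector. Since $\cE_\alpha$ is concave on the convex set $\cP_N$ and a minimizer is assumed to exist, the geometric lemma above guarantees a minimizer $\gamma_*$ which is an extreme point of $\cP_N$, hence a rank-$N$ projector $\gamma_* = \sum_{i=1}^N |\phi_i\rangle\langle\phi_i|$ with $(\phi_i)$ orthonormal. In fact I would first argue that \emph{every} minimizer is a projector: if $\gamma_*$ had an eigenvalue $\nu \in (0,1)$, strict concavity of $t \mapsto F_\alpha(t)$ (away from the trivial linear regime) would let one write $\gamma_*$ as a nontrivial convex combination $\frac12(\gamma_\varepsilon + \gamma_{-\varepsilon})$ of admissible operators with $\cE_\alpha(\gamma_*) > \frac12(\cE_\alpha(\gamma_\varepsilon)+\cE_\alpha(\gamma_{-\varepsilon}))$ unless the densities coincide — and a short argument (moving the two fractional eigenvalues, which changes $\rho_*$ unless the eigenfunctions have the same modulus, which is impossible for orthonormal functions generating a strict combination) rules that out. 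This gives that all minimizers are projectors.

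Next I would write the Euler–Lagrange equations. Fix a minimizing projector $\gamma_* = \sum_{i=1}^N |\phi_i\rangle\langle\phi_i|$ and consider perturbations of the $\phi_i$ within the constraint $\langle \phi_i,\phi_j\rangle = \delta_{ij}$. Differentiating $\cE_\alpha$ and noting that $\frac{\delta}{\delta \overline{\phi_i}} \int F_\alpha(\rho_*) = F_\alpha'(\rho_*)\phi_i$ (here one needs $F_\alpha'$ to be well-defined and the density $\rho_*$ to lie in a range where $F_\alpha$ is $C^1$, which follows since $F_1 \in C^2$ and $\rho_* \in L^1 \cap L^{(d+2)/d}$), the stationarity condition reads $(-\Delta + F_\alpha'(\rho_*))\phi_i = \sum_j \Lambda_{ij}\phi_j$ for a Hermitian matrix of Lagrange multipliers $\Lambda$. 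Diagonalizing $\Lambda$ (which amounts to a unitary rotation of the $\phi_i$, leaving $\gamma_*$ and $\rho_*$ unchanged) we get $H_*\phi_i = -\mu_i\phi_i$ with $H_* = -\Delta + F_\alpha'(\rho_*)$, so $\gamma_* = \1(H_* \in \{-\mu_1,\dots,-\mu_N\})$ is spanned by eigenfunctions of $H_*$.

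It then remains to show that the occupied eigenvalues are exactly the $N$ lowest and that they are strictly negative and strictly separated from the $(N+1)$-th level. For the \emph{filling-from-the-bottom} property, suppose $H_*$ has an eigenvalue (or essential-spectrum point) $-\mu < -\mu_j$ for some occupied $-\mu_j$; replacing $\phi_j$ by the corresponding lower state $\psi$ and using first-order perturbation theory on $\gamma_*^{(s)} = \gamma_* - s|\phi_j\rangle\langle\phi_j| + s|\psi\rangle\langle\psi|$ — which is admissible in $\cP_N$ for small $s>0$ — decreases $\cE_\alpha$ to first order (the self-consistent first variation is $\mathrm{Tr}(H_*\,\delta\gamma) = s(-\mu + \mu_j) < 0$), contradicting minimality. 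This simultaneously shows $\gamma_* = \1(H_* \le \mu_N)$ and that there is a spectral gap $-\mu_N < -\mu_{N+1}$ (otherwise the minimizer would not be unique as a spectral projector, or one could again rotate within a degenerate eigenspace straddling the Fermi level, contradicting that every minimizer is a projector of rank exactly $N$). Finally $\mu_N > 0$: since $I_\alpha(N) < 0$ (we are in the regime $\alpha > \alpha_c^{(N)}$) and $I_\alpha(N) = \cE_\alpha(\gamma_*) = \sum_i \mu_i \lambda_i^{\text{(occ.)}} \cdots$ — more carefully, comparing $I_\alpha(N)$ and $I_\alpha(N-1)$ via removing the top state and using strict monotonicity/binding shows the top occupied eigenvalue must be strictly negative; and $\mu_{N+1} \ge 0$ because the bottom of the essential spectrum of $H_*$ is $\ge 0$ (the potential $F_\alpha'(\rho_*) \to 0$ at infinity since $\rho_* \to 0$).

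\textbf{Main obstacle.} The delicate point is the filling-from-the-bottom argument together with the strict gap $-\mu_N < -\mu_{N+1} \le 0$: one must handle the possibility that the relevant "$(N+1)$-th level" is the bottom of the essential spectrum rather than an eigenvalue, and justify that the admissible perturbations stay inside $\cP_N$ while genuinely lowering the energy — this is exactly where the concavity of $\cE_\alpha$ (ensuring the first-order term controls the sign) and the fact that $F_\alpha'(\rho_*)$ is a decaying, relatively compact perturbation of $-\Delta$ both enter. The regularity input $F_1 \in C^2$ guaranteeing that $F_\alpha'(\rho_*)$ makes sense pointwise and that $H_*$ is a well-defined self-adjoint Schrödinger operator is routine but must be stated.
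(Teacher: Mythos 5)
Your overall architecture (projector property, self-consistent equation, aufbau principle, strict separation of the Fermi level) matches the statement, and you correctly identify concavity as the reason a negative first-order variation is decisive. But the steps carrying the real content of the lemma are not actually proved. (i) Your argument that \emph{every} minimizer is a projector hinges on the claim that two orthonormal functions cannot have equal moduli; this is false (take $\phi_2=\sigma\phi_1$ with a measurable sign or phase $\sigma$ chosen so that $\langle\phi_1,\phi_2\rangle=0$), so the step ``shifting two fractional eigenvalues must change $\rho_*$'' is unjustified. The paper does not proceed this way: it first uses the concavity inequality $\cE_\alpha(\gamma_*+Q)\le\cE_\alpha(\gamma_*)+\Tr(H_*Q)$ to show that $\gamma_*$ minimizes the \emph{linearized} problem $\gamma\mapsto\Tr(H_*\gamma)$ over $\cP_N$, hence $\gamma_*=\1(H_*<-\mu_N)+\delta_*$ with $\delta_*$ supported in the $-\mu_N$-eigenspace, and only deduces that $\gamma_*$ is a projector at the very end, after analyzing that eigenspace. (ii) Your justification of the gap $-\mu_N<-\mu_{N+1}$ (``otherwise one could rotate within a degenerate eigenspace straddling the Fermi level, contradicting that every minimizer is a rank-$N$ projector'') fails: such rotations produce other rank-$N$ projectors with the same value of $\Tr(H_*\gamma)$, so neither minimality nor the projector property is contradicted. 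This is the hard part of the lemma; the paper handles it with the exact second-order identity: all competitors $\widetilde\gamma+\delta$ in the Fermi shell have energy $\le I_\alpha(N)$, so the strictly negative quadratic term (strict concavity of $F_\alpha$ plus $\rho_*\ge|\phi_1|^2>0$ from Perron--Frobenius) forces $\rho_\delta=\rho_{\delta_*}$ pointwise for \emph{all} admissible $\delta$; this becomes a matrix identity $\Tr(AM(\bx))=\Tr(A_*M(\bx))$, and a linear-algebra argument shows the multiplicity of $-\mu_N$ must equal $N-M$, otherwise the eigenfunctions vanish identically.

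(iii) Likewise $\mu_N>0$ is not established: you invoke ``strict monotonicity'' of $I_\alpha$, which is unavailable ($\lambda\mapsto I_\alpha(\lambda)$ is only non-increasing and can be identically $0$ on an initial interval, e.g.\ $I_\alpha(1)=0$ exactly in the regime of Theorem~\ref{th:main_fermionic_intro}). The paper argues by contradiction: if $\mu_N=0$, removing a fraction $t\delta_*$ of the top shell kills the first-order term while the second-order term is strictly negative, giving $I_\alpha(N-t(N-M))<I_\alpha(N)$ and contradicting that $I_\alpha$ is non-increasing. In short, every strict inequality in the lemma requires the quadratic term in the expansion of $\cE_\alpha$ together with the strict positivity of $\rho_*$; your proposal uses only first-order information, which can never yield the strict statements. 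Your intermediate route to the Euler--Lagrange equation (orbital variations plus diagonalization of the Lagrange multiplier matrix) is a legitimate alternative to the paper's linearization step, but by itself it only shows the $\phi_i$ are \emph{some} eigenfunctions of $H_*$, not the lowest ones.
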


Note the strict inequalities $-\mu_N < 0$ and $-\mu_N < -\mu_{N+1}$. This is reminiscent of Hartree--Fock theory, for which there is no {\em unfilled shell}~\cite{BacLieLos-94}, see also~\cite{GonLewNaz-21, FraGonLew-21c}. The proof below will show that it is a general fact when we consider concave functionals $F$ with $F'(0) = 0$.

\medskip

In what follows, we will use several times the following identity. For all operators $\gamma$ and $Q$ with corresponding densities $\rho$ and $\rho_Q$, we have, with $H_\gamma := - \Delta + F'_\alpha(\rho)$,
\begin{align}
	\label{eq:identity_gamma+Q}
    \cE_\alpha(\gamma + Q) & = \cE_\alpha(\gamma) + \Tr( - \Delta Q) +\int_{\R^d} F_\alpha (\rho + \rho_Q) - F_\alpha(\rho) \nonumber \\
   & = \cE_\alpha(\gamma) + \Tr \left[ H_\gamma Q\right] + \int_{\R^d} F_\alpha(\rho + \rho_Q) - F_\alpha(\rho) - F'_\alpha(\rho) \rho_Q \nonumber \\
    & = \cE_\alpha(\gamma) + \Tr \left[ H_\gamma Q\right] + \int_{\R^d} \left( \int_0^1 (1 - s) F_\alpha''(\rho + s \rho_Q) \rd s \right) \rho_Q^2. 
 \nonumber   \\
\end{align}
In particular, since $F_\alpha$ is concave, we have
\begin{equation} \label{eq:inequality_gamma+Q}
    \cE_\alpha(\gamma + Q)  \le \cE_\alpha(\gamma) + \Tr \left[ H_\gamma Q\right].
\end{equation}
We do not assume here that the operators are positive nor that they satisfy the Pauli principle for these expressions. We only require that $\rho \ge 0$ and $\rho + \rho_Q \ge 0$ in order for $F_\alpha(\rho)$ and $F_\alpha(\rho + s\rho_Q)$ to be well-defined for any $0\leq s \leq 1$.

\begin{proof}[Proof of Lemma~\ref{lem:form_optimizer_fermionic}]
    Let $\gamma_*$ be a minimizer. Then, for all $0 \le \gamma = \gamma^* \le 1$ with $\Tr(\gamma) = N$, and all $0 \le t \le 1$, the operator $\gamma_t := (1 - t) \gamma_* + t \gamma$ is a valid test operator for $I_\alpha(N)$. Using~\eqref{eq:inequality_gamma+Q} with $\gamma = \gamma_*$ and $Q = t \left[\gamma - \gamma_*\right]$, we get, with $H_* := - \Delta + F_\alpha'(\rho_*)$,
    \begin{align*}
       I_\alpha(N) & \le \cE_\alpha (\gamma_t) \le  I_\alpha(N) +  t \Tr \left[ H_* (\gamma - \gamma_*)\right] .
    \end{align*}
    We deduce that $\gamma_*$ is also an  optimizer of the linear problem
    \[
        \inf \left\{ \Tr \left( H_* \gamma \right)  , \qquad   0 \le \gamma = \gamma^* \le 1, \quad \Tr(\gamma) = N \right\}.
    \]
    In particular, since this linear problem has an optimizer $\gamma_*$, the mean--field operator $H_*$ has at least $N$ eigenvalues below its essential spectrum. Note that since $F_\alpha'(\rho_*)$ goes to $0$ at infinity, we have $\sigma_{\rm ess}(H_*) = [0, \infty)$. We denote by $-\mu_1 \le -\mu_2 \le \cdots \le -\mu_N \le 0$ these $N$ lowest eigenvalues, with multiplicities. Actually, all optimizers of the linear problem are of the form
    \[
    \gamma = \widetilde{\gamma} + \delta, \qquad \text{with} \quad \widetilde{\gamma} := \1(H_* < -\mu_N), \quad \text{and} \quad 0 \le \delta \le \1(H = -\mu_N) \quad \text{with} \quad
    \Tr(\delta) = N - \Tr (\widetilde{\gamma}).
    \]
    Using the min-max principle and  Perron--Frobenius, the first eigenvalue $-\mu_1$ is always simple, and we can choose a positive eigenvector $\phi_1 > 0$ for $\mu_1$. This already implies that $\rho_* \ge | \phi_1 |^2 > 0$. In addition, in the case $N > 1$, we have $\Tr(\widetilde{\gamma}) = M \in \N$ with $1 \le M < N$ and $\widetilde{\rho} \ge | \phi_1 |^2 > 0$ as well.
    
    In what follows, we write $\gamma_* = \widetilde{\gamma} + \delta_*$.  Note that $\Tr(\delta_*) = N - M > 0$.
    
    \medskip
    
    \underline{Let us prove that $-\mu_N < 0$.} Assume by contradiction that $\mu_N = 0$. For $t \in (0, 1)$ and $\lambda = N - t \in (N-1, N]$, consider the test state 
    \[
    \gamma_t := \widetilde{\gamma} + (1-t) \delta_* = \gamma_* - t \delta_*,
    \]
    which is a valid test operator for $I_\alpha(N- t(N-M))$. Using~\eqref{eq:identity_gamma+Q} with $\gamma = \gamma_*$ and $Q = - t \delta_*$, we get $I_\alpha(N - t(N-M)) \le \cE_\alpha (\gamma_t)$ with
    \begin{equation} \label{eq:inequ_muN}
        \cE_\alpha (\gamma_t) =I_\alpha(N) - t \underbrace{\left[ \Tr \left(H_* \delta_* \right) \right]}_{=-\mu_N(N-M) = 0}  + t^2 \int_{\R^d} \left( \int_0^1 (1 - s) F_\alpha''(\rho_{\gamma_*} - s t \rho_{\delta_*}) \rd s \right) \rho_{\delta_*}^2.
    \end{equation}
    For all $0 < s,t < 1$, we have $\rho_{\gamma_*} -s t \rho_{\delta_*} \ge \widetilde{\rho} > 0$ pointwise, so the function in parenthesis is pointwise strictly negative. Since $\rho_{\delta_*} > 0$ as well, we get the strict inequality
    \[
        I_\alpha(N - t(N-M)) < I_\alpha(N),
    \]
    which is a contradiction. This proves that $-\mu_N < 0$.
    
   \begin{remark}
       In the case $\alpha > \alpha_c^{(1)}$, we can use~\eqref{eq:inequality_gamma+Q} instead and get similarly that
       \[
        I_\alpha(N - t(N-M)) \le \cE_\alpha(\gamma_*) - t \left[ \Tr \left(H_* \delta_* \right) \right]  = I_\alpha(N) + t \mu_N (N-M).
       \]
       Taking $t = (N-M)^{-1}$ shows that $ I_\alpha(N - 1) \le I_\alpha(N) + \mu_N$. Together with the weak--binding inequality $I_\alpha(N) \le I_\alpha(N - 1) + I_\alpha(1)$, we deduce that
       \[
            -\mu_N \le I_\alpha(1) = J_\alpha(1) < 0.
       \]
       In other words, in the case $\alpha > \alpha_c^{(1)}$, the $N$-th lowest eigenvalue of the $N$-body mean--field operator is always bounded from above by a negative constant, independent of $N$.
   \end{remark}

    \underline{We now prove that $-\mu_N < -\mu_{N+1}$.}
        Let us denote by $\cM$ the set of all minimizers for the linear problem, that is
    \[
        \cM := \left\{ \widetilde{\gamma} + \delta, \qquad 0 \le \delta \le \1( H = -\mu_N), \quad \Tr(\delta) = N - M \right\}.
    \]
    Recall that we write $\gamma_* =  \widetilde{\gamma} + \delta_*$. Since $\gamma_*$ is a minimizer of $I_\alpha(N)$, it is also a minimizer among $\cM$. Using again~\eqref{eq:identity_gamma+Q}, with $\gamma = \gamma_*$ and $Q = \delta - \delta_*$, we deduce that for all $0 \le \delta \le \1(H = -\mu_N)$ with $\Tr(\delta) = N - M$, we have
    \[
        I_\alpha(N) \le \cE(\widetilde{\gamma} + \delta) = \cE(\gamma_* + Q) = \cE(\gamma_*) + \underbrace{\Tr \left[ H_\gamma (\delta - \delta_*)\right]}_{ = 0} + \int_{\R^d} \left( \int_0^1 (1 - t) F_\alpha''(\rho_* + t \rho_Q) \rd t\right) \rho_Q^2.
    \]
    In addition, since $0 < | \phi_1 |^2 \le \widetilde{\rho} \le \rho_* + t \rho_Q$ for all $t \in [0, 1]$, the function in parenthesis is pointwise negative by strict concavity of $F_\alpha''$. We deduce that for all $0 \le \delta \le \1(H = -\mu_N)$ with $\Tr(\delta) = N- M$, we have the pointwise equality
    \begin{equation} \label{eq:rho_delta}
        \forall \bx \in \R^d, \qquad \rho_\delta(\bx) = \rho_{\delta_*}(\bx).
    \end{equation}

    Since $-\mu_N < 0 = \inf \sigma_{\rm ess}(H_*)$, $-\mu_N$ is an eigenvalue of finite multiplicity. Let $K$ be the multiplicity of $-\mu_N$. Then, recalling that $1 \le M = \Tr(\1(H_* < -\mu_N)) < N$, we have
    \begin{equation} \label{eq:inequalities_muN}
    -\mu_1 < -\mu_2 \le \cdots \le -\mu_{M} < -\mu_{M+1} = -\mu_{M+2} = \cdots = -\mu_{M+K} < -\mu_{M+K+1}.
    \end{equation}
    with $-\mu_{M+1} = -\mu_{M+2} = \cdots = -\mu_{M+K} = -\mu_N$ (so $N \le M+K$). We denote by $(\psi_1, \cdots, \psi_K)$ an orthonormal family spanning ${\rm Ran} \,  \1(H_* =- \mu_N)$. For $1 \le \delta \le \1(H_* =- \mu_N)$ with $\Tr(\delta) = N - M$, we denote by $A$ its matrix in this basis, namely
    \[
        \delta = \sum_{\ell, k = 1}^K A_{k \ell} | \psi_k \rangle \langle \psi_\ell |, \quad \text{with} \quad A_{k \ell} := \langle \psi_k, \delta \psi_\ell \rangle, \quad 0 \le A = A^* \le 1, \quad \Tr(A) = N - M.
    \]
    We let $A_*$ be the matrix of $\delta_*$. Finally, for $\bx \in \R^d$, we denote by $M = M(\bx)$ the $K \times K$ matrix with coefficients
    \[
        M_{k \ell} = M_{k \ell} (\bx) := \psi_k(\bx) \overline{\psi}_\ell(\bx).
    \]
    In the basis $(\psi_1, \cdots, \psi_K)$, the equality~\eqref{eq:rho_delta} reads
    \[
        \forall A \in \cM_{K \times K} \quad \text{with} \quad 0 \le A = A^* \le 1, \quad \Tr(A) = N - M, \qquad \text{we have} \quad  \Tr (A M) = \Tr(A_* M).
    \]
    The conditions on $A$ shows that $N - M \le K$. We claim that we have equality. Indeed, minimizing and maximizing $A \mapsto \Tr(AM)$ among all matrices $A$ satisfying the previous conditions shows that the sum of the $N - M$ lowest eigenvalues of $M$ is equal to the sum of the $N - M$ highest eigenvalues of $M$. In the case $K > N - M$, this implies that all eigenvalues of $M$ are equal, so $M$ is proportional to the identity matrix, namely
    \[
        \forall 1 \le k, \ell \le K, \qquad  | \psi_k |^2(\bx) =   | \psi_\ell |^2(\bx) \quad \text{and} \quad \overline{\psi_k}(\bx) \psi_\ell(\bx) = 0.
    \]
    Taking the modulus of the last equality shows that $| \psi_k | (\bx) = 0$ pointwise, which is a contradiction.
    
    \medskip
    
    So we must have $K = N - M$. This already implies that $-\mu_N < -\mu_{N+1}$ according to~\eqref{eq:inequalities_muN}. In addition, since $K = N - M$, the only matrix $A$ satisfying $0 \le A \le 1$ with $\Tr(A) = K$ is the identity matrix. In other words, we have proved that
    \[
        \gamma_* = \1(H_* \le \mu_N) \quad \text{and} \quad \mu_N < \mu_{N+1} \le 0.
    \]
    This proves as wanted that $\gamma_*$ is a projector of rank $N$.
\end{proof}

We now turn to the decay properties of $\rho_*$.

\begin{lemma} \label{lem:decay_rho*}
If $\gamma_*$ is a minimizer of $I_\alpha(N)$, then, with the same notation as before, there are constants $0 < c_- < c_+ < \infty$ so that, for all $\bx \in \R^d$,
\[
 c_- \dfrac{\re^{- 2 \sqrt{ \mu_N } | \bx |}}{1 + | \bx |^{d-1}} \le \left[ \rho_* \right](\bx),
 \quad \text{and} \quad
 \rho_*(\bx) + | \nabla \rho_*(\bx) |  \le c_+ \dfrac{\re^{- 2 \sqrt{\mu_N} | \bx |}}{1 + | \bx |^{d-1}},
\]
where $[ \rho ](\bx) = \int_{{\rm SO}(n)} \rho(\cR \bx) \rd \omega(\cR)$ denotes the spherical average of $\rho$ over all rotations. 
\end{lemma}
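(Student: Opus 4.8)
**Proof proposal for Lemma \ref{lem:decay_rho*}.**

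The plan is to exploit the self-consistent equation $\gamma_* = \1(H_* \le \mu_N)$ from Lemma \ref{lem:form_optimizer_fermionic}, which writes $\rho_* = \sum_{i=1}^N |\phi_i|^2$ with $(-\Delta + F_\alpha'(\rho_*))\phi_i = -\mu_i \phi_i$ and $0 < \mu_N \le \cdots \le \mu_1$, $\mu_N < \mu_{N+1}$. The key points are: (i) the potential $V := F_\alpha'(\rho_*)$ is continuous, radial-on-average is not enough so we must be slightly careful, but it tends to $0$ at infinity; (ii) each $\phi_i$ decays like a Yukawa profile with rate $\sqrt{\mu_i} \ge \sqrt{\mu_N}$; and (iii) the \emph{slowest}-decaying one, corresponding to $\mu_N$, controls the lower bound after spherical averaging.

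First I would establish the decay of $V$. Since each eigenvalue $-\mu_i$ is strictly negative and $V \to 0$ at infinity, Agmon-type estimates (or, more elementarily, a comparison argument with the Yukawa potential $Y_{\sqrt{\mu_i}}$ exactly as in the proof of Proposition \ref{prop:existence_ground_state}) give $|\phi_i|(\bx) + |\nabla \phi_i|(\bx) \le C \re^{-\delta_i |\bx|}$ for some $0 < \delta_i < \sqrt{\mu_i}$ initially, hence $\rho_* \to 0$ exponentially fast and so does $V = F_\alpha'(\rho_*)$ (using $|F_\alpha'(t)| \le C t^{q-1}$ near $0$). Once $V$ is known to decay exponentially, I would bootstrap: each $\phi_i$ solves $(-\Delta + \mu_i)\phi_i = -V\phi_i$, so $\phi_i = -Y_{\sqrt{\mu_i}} * (V\phi_i)$, and the now-standard convolution estimates (as in the $|\nabla u_\mu|$ part of Proposition \ref{prop:existence_ground_state}, using $Y_a * Y_b = c(b^2-a^2)^{-1}(Y_a - Y_b)$) upgrade this to the sharp bound
\[
 |\phi_i|(\bx) + |\nabla \phi_i|(\bx) \le c_+ \dfrac{\re^{-\sqrt{\mu_i}|\bx|}}{1 + |\bx|^{\frac{d-1}{2}}}.
\]
Summing $\rho_* = \sum_i |\phi_i|^2$ and $\nabla \rho_* = 2\sum_i \phi_i \nabla \phi_i$, and using $\mu_i \ge \mu_N$ for all $i \le N$, yields the upper bound $\rho_*(\bx) + |\nabla\rho_*|(\bx) \le c_+ \re^{-2\sqrt{\mu_N}|\bx|}/(1+|\bx|^{d-1})$.

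For the lower bound, the subtlety is that individual $|\phi_i(\bx)|$ may vanish along some directions (the $\phi_i$ for $i \ge 2$ need not be signed), which is exactly why only the spherical average $[\rho_*]$ is claimed. I would argue as follows: let $\phi := \phi_j$ be an eigenfunction attached to the eigenvalue $-\mu_N$ (take $j = N$). From the Yukawa representation $\phi = -Y_{\sqrt{\mu_N}}*(V\phi)$, the function $V\phi$ has a nonzero spherical average of some fixed sign near the origin (or, if needed, I can instead argue that $\|\phi\|_{L^2(\partial B_r)}^2 \ge c\, \re^{-2\sqrt{\mu_N} r}/(1+r^{d-1})$ by a Harnack/ODE argument on the radial profile of $[\,|\phi|^2\,]$, which satisfies a Bessel-type differential inequality forcing the stated decay from below — this is the content invoked via \cite{HofHofSwe-85}, Corollary 2.2(i), applied to the equation satisfied by $\phi$). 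Since $[\rho_*] \ge [\,|\phi_N|^2\,]$ pointwise, the claimed lower bound follows.

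The main obstacle is the lower bound: turning the upper exponential bound into a matching lower bound for the spherical average. The cleanest route is to cite \cite{HofHofSwe-85} (as the paper already does for $u_\mu$ in Proposition \ref{prop:existence_ground_state}): the eigenfunction $\phi_N$ satisfies $(-\Delta + V + \mu_N)\phi_N = 0$ with $V$ continuous and exponentially decaying, so Corollary 2.2 there — whose lower bound is stated precisely for the \emph{spherical average} $(1+|\bx|^{\frac{d-1}{2}})\re^{\sqrt{\mu_N}|\bx|}[\,|\phi_N|\,](\bx) \ge c_- > 0$ — gives what we need after squaring and using $[\rho_*] \ge [\,|\phi_N|^2\,] \ge [\,|\phi_N|\,]^2$ by Jensen. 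One must check the hypotheses of that corollary (in particular that $V$ decays exponentially, which we secured in the first step, and that $\mu_N>0$, which is Lemma \ref{lem:form_optimizer_fermionic}); the rest is bookkeeping with the constants.
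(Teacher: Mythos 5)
Your proposal is correct and follows essentially the same route as the paper, which itself only sketches the argument by deferring to \cite[Lemma 19]{GonLewNaz-21} and \cite[Corollary 2.2]{HofHofSwe-85}: Yukawa/Agmon bootstrapping for the sharp upper bounds on each $\phi_i$ and $\nabla\phi_i$, domination of $\rho_*$ by the slowest-decaying eigenfunction $\phi_N$, and the spherical-average lower bound from \cite{HofHofSwe-85} applied to $\phi_N$ (either via the $L^2$ spherical average of $\phi_N$ or via Jensen, both of which you correctly note yield the stated $(1+|\bx|^{d-1})^{-1}$ factor). You supply more detail than the paper does, and the steps you flag as delicate (non-radiality of $V$, the sign-changing $\phi_i$ forcing the spherical average in the lower bound) are exactly the right ones.
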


     The proof is similar to~\cite[Lemma 19]{GonLewNaz-21} and uses the results of~\cite[Corollary 2.2]{HofHofSwe-85} on the spherical average of eigenvectors.  We skip it for the sake of brevity. For the upper bound on $\rho_*$, note that all eigenvalues $-\mu_1 < -\mu_2 \le \cdots <-\mu_N < 0$ are negative, hence all eigenvectors $\phi_i$ decays like $\re^{- \sqrt{ \mu_i } | \bx |} | \bx |^{-(\frac{d-1}{2})}$. In particular, $\phi_N$ decays slower than $\phi_k$ for $k \le N$. This explains why $\rho_*$ decays as $| \phi_N |^2$ at infinity, with exponential decay $\sim \re^{-2 \sqrt{\mu_N} | \bx |}$. We refer to~\cite[Lemma 19]{GonLewNaz-21} for the rest of the proof.
    
    \medskip

The general estimates for $\rho_*$ in Lemma~\ref{lem:decay_rho*} implies the following, which improves Theorem~\ref{th:main_fermionic}.

\begin{theorem}
    \label{th:existence_of_minimizers_for_infinitely_many_N}
    Assume $\alpha > \alpha_c^{(\infty)}$ and $q < 2$. For $N \in \NN$ such  that the problem $I_\alpha(N)$ has a minimizer, we have the strict inequality $I_\alpha(2N) < 2 I_\alpha(N)$. In particular, there is an infinity of integers $ N_1 < N_2 < N_3 < \cdots$ so that the problem $I_\alpha(N_k)$ has a minimizer.
\end{theorem}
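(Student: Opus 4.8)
The plan is to mimic, at the level of an $N$-particle minimizer, the two-bumps test-state construction used to prove $\alpha_c^{(2)}<\alpha_c^{(1)}$ (see Lemma~\ref{lem:estimates} and~\eqref{eq:I2<2I1}), and then to deduce "infinitely many minimizers" from the doubling inequality $I_\alpha(2N)<2I_\alpha(N)$ by a purely combinatorial argument on the map $M\mapsto I_\alpha(M)/M$.

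First I would fix a minimizer $\gamma_*$ of $I_\alpha(N)$. By Lemma~\ref{lem:form_optimizer_fermionic} it is a rank-$N$ projector $\gamma_*=\sum_{i=1}^{N}|\phi_i\rangle\langle\phi_i|$ with $(-\Delta+F_\alpha'(\rho_*))\phi_i=-\mu_i\phi_i$ and $0<\mu_N\le\dots\le\mu_1$, and by Lemma~\ref{lem:decay_rho*} (together with the pointwise bounds of Proposition~\ref{prop:existence_ground_state} applied to each $\phi_i$) the slowest decay rate, both of the $\phi_i$ and of $\rho_*$, is governed by $\mu_N=\min_i\mu_i$. For a unit vector $\be$ and a large parameter $R$, I would set $\phi_i^{(\pm)}:=\phi_i(\cdot\mp\tfrac{R}{2}\be)$, $\rho_R^{(\pm)}:=\rho_*(\cdot\mp\tfrac{R}{2}\be)$, and take $\gamma_R$ to be the orthogonal projector onto the span of $\{\phi_i^{(-)},\phi_i^{(+)}:1\le i\le N\}$; its Gram matrix is $I_{2N}+O(\varepsilon_R)$ with $\varepsilon_R:=\max_{i,j}|\langle\phi_i^{(-)},\phi_j^{(+)}\rangle|\le C\re^{-\sqrt{\mu_N}R}$, so for $R$ large $\gamma_R$ is a genuine rank-$2N$ projector, hence an admissible test state for $I_\alpha(2N)$. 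Expanding $\gamma_R$ through the inverse Gram matrix exactly as in the proof of Lemma~\ref{lem:estimates}, using that each $\phi_i$ and $\nabla\phi_i$ decays like $\re^{-\sqrt{\mu_i}|\bx|}$, and absorbing the kinetic cross terms and all Gram corrections into an $O(\re^{-2\sqrt{\mu_N}R})$ remainder (here one uses the upper bound on $\rho_*$ from Lemma~\ref{lem:decay_rho*}), I expect
\[
\cE_\alpha(\gamma_R)=2\,I_\alpha(N)+\int_{\R^d}\big[F_\alpha\big(\rho_R^{(-)}+\rho_R^{(+)}\big)-F_\alpha\big(\rho_R^{(-)}\big)-F_\alpha\big(\rho_R^{(+)}\big)\big]+O\big(\re^{-2\sqrt{\mu_N}R}\big).
\]

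Next I would bound the integral. Since $F_\alpha$ is concave with $F_\alpha(0)=0$, the integrand is pointwise $\le0$, so I may restrict to a fixed ball near the origin; there $\rho_R^{(\pm)}\to0$ uniformly as $R\to\infty$, so $F_\alpha=-t^q$ on the relevant range and the integrand becomes $-[(\rho_R^{(-)}+\rho_R^{(+)})^q-(\rho_R^{(-)})^q-(\rho_R^{(+)})^q]\le-(2^q-2)\min(\rho_R^{(-)},\rho_R^{(+)})^q$. Using the lower bound for the spherical average of $\rho_*$ from Lemma~\ref{lem:decay_rho*}, together with the gradient bound (to pass from a single point to a ball of fixed radius) and a careful choice of the direction $\be$, I would show that $\rho_R^{(\pm)}\ge c\,\re^{-\sqrt{\mu_N}R}R^{-(d-1)}$ on a set of fixed positive measure, whence the integral is $\le-c'\,R^{-q(d-1)}\re^{-q\sqrt{\mu_N}R}$. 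Because $q<2$ we have $q\sqrt{\mu_N}<2\sqrt{\mu_N}$, so this negative gain dominates the $O(\re^{-2\sqrt{\mu_N}R})$ remainder, giving $\cE_\alpha(\gamma_R)<2I_\alpha(N)$ for $R$ large and hence $I_\alpha(2N)<2I_\alpha(N)$. The main obstacle is precisely this pointwise lower bound on the overlap: when $N=1$ the minimizer is radial decreasing and the estimate is immediate (this is Lemma~\ref{lem:estimates}), whereas for $N\ge2$ the density $\rho_*$ need not be radial and one controls only its spherical average, so one must either select a direction $\be$ along which the two bumps genuinely overlap or average the test-state energy over rotations — this is the $N$-particle counterpart of the estimates in~\cite[Lemma~19]{GonLewNaz-21}, which Lemma~\ref{lem:decay_rho*} is tailored to supply.

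Finally, to pass from this doubling inequality to infinitely many minimizers, I would argue as follows. Set $g(M):=I_\alpha(M)/M$. If $M\ge2$ is a \emph{strict record} of $g$, i.e.\ $g(K)>g(M)$ for all $1\le K<M$, then for every such $K$ one has $I_\alpha(K)+I_\alpha(M-K)=Kg(K)+(M-K)g(M-K)>Mg(M)=I_\alpha(M)$, so the strong binding inequality~\eqref{eq:strong_binding_integers} holds and $I_\alpha(M)$ has a minimizer by Lemma~\ref{lem:strong_binding_Lewin}. Since $\alpha>\alpha_c^{(\infty)}$, Theorem~\ref{th:main_fermionic} furnishes some $N_1$ with $I_\alpha(N_1)<0$ for which $I_\alpha(N_1)$ has a minimizer; let $M_1$ be the smallest minimizer of $g$ over $\{1,\dots,N_1\}$, so $M_1$ is a strict record and $g(M_1)\le g(N_1)<0$, hence $M_1$ has a minimizer (in the degenerate case $M_1=1$ one has $\alpha>\alpha_c^{(1)}$, so $I_\alpha(1)=J_\alpha(1)$ has a minimizer). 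Given $M_k\ge1$ with a minimizer and $g(M_k)<0$, the doubling inequality gives $g(2M_k)<g(M_k)$; taking $M_{k+1}$ to be the smallest minimizer of $g$ over $\{1,\dots,2M_k\}$, one gets $g(M_{k+1})\le g(2M_k)<g(M_k)<0$, that $M_{k+1}$ is a strict record, and that $M_{k+1}>M_k$ (otherwise $M_{k+1}$ would lie in the range on which $M_k$ was the smallest minimizer of $g$, contradicting $g(M_{k+1})<g(M_k)$), so $M_{k+1}\ge2$ and $I_\alpha(M_{k+1})$ has a minimizer. Iterating produces integers $M_1<M_2<M_3<\cdots$ for which $I_\alpha(M_k)$ admits a minimizer, which is the assertion of the theorem.
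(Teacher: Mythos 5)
Your proposal is correct and follows essentially the same route as the paper, which here simply defers to \cite[Theorem 4]{GonLewNaz-21}: two far-apart translated copies of the rank-$N$ minimizer glued into a rank-$2N$ projector, with the concave overlap term of order $R^{-q(d-1)}\re^{-q\sqrt{\mu_N}R}$ dominating the $O(\re^{-2\sqrt{\mu_N}R})$ Gram and kinetic corrections precisely because $q<2$, followed by the pigeon-hole step via records of $M\mapsto I_\alpha(M)/M$. You also correctly identify the one genuinely delicate point for $N\ge 2$ (the density is no longer radial, so the overlap lower bound must go through the spherical average of Lemma~\ref{lem:decay_rho*} and a choice or average of directions), which is exactly the content of \cite[Lemma 19]{GonLewNaz-21} that the paper invokes without repeating.
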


The proof follows exactly the lines of~\cite[Theorem 4]{GonLewNaz-21}, so we do not repeat them here. We note that the proof of the strict binding inequality $I_\alpha(2N) < 2 I_\alpha(N)$ is similar to the one of  Theorem~\ref{th:main_fermionic}: we consider two copies of the minimizer $\gamma_N$ far away, and compute the interaction between these two copies. In the case $q < 2$, the interaction is attractive, giving the strict inequality $I_\alpha(2N) < 2 I_\alpha(N)$. The fact that it implies the existence of minimizers for all $N$ is a pigeon--hole argument, see~\cite[Theorem 4]{GonLewNaz-21}. 
 
\appendix

\section{Asymptotics of the bosonic ground state}
\label{sec:appendix:asymptotics}

The goal of this section is to study the properties of the ground state $u_\mu$ of~\eqref{eq:EL_mu} in the limits $\mu \to 0$ and $\mu \to \infty$. Parts of the proofs follow the lines of~\cite{MorMur-14,LewRot-20}. 

\medskip

Recall that $u_\mu$ is the {\bf unique} radial decreasing positive solution of the equation
\begin{equation} \label{eq:EL_mu_v2}
     - \Delta u = g_\mu(u), \quad \text{with} \quad g_\mu(t) = - t F_1'(t^2) - \mu t.
\end{equation}
We also recall that we set
\[
    \Lambda(\mu) := \| u_\mu \|_{L^2}^2, \quad \text{and} \quad \cF_1(u)  := \int_{\R^d} | \nabla u |^2 + \int_{\R^d} F_1(u^2).
\]
Finally, we recall the definition of $F_1$, given in~\eqref{eq:scaling_Falpha} by
\[
    F_1(t) := \begin{cases}
    - t^q & \quad t \le 1 \\
    - \fa + \fb t - \fc t^{r} & \quad t \ge 1.
\end{cases}, \quad \text{with} \quad 0 < r < \frac{d+2}{d} < q.
\]
The coefficients $\fa, \fb$ and $\fc$ are positive. We made this choice so that $F_1$ is of class $C^2$, concave, behaves as $-t^q$ around $0$, and as $-t^r$ at infinity.

\subsection{The limit $\mu \to 0$}
We first focus on the limit $\mu \to 0$.
It is useful to distinguish the case $q < \frac{d}{d-2}$ (sub-critical case) from the case $q \ge \frac{d}{d-2}$ (the critical or super--critical case). Note that the critical and the super--critical cases may only occur in dimension $d\ge 3$. 
\begin{lemma}[Limit $\mu \to 0$, subcritical case] ~Assume $q < \frac{d}{d-2}$. Let $v_0$ be the (unique) ground state to the NLS equation
    \begin{equation} \label{eq:NLS_v0}
        - \Delta v = - v + q v^{2q-1},
    \end{equation}
    and set $\mu_c := v_0(\bnull)^{-2(q-1)}$. Then, for all $0 < \mu < \mu_c$, we have
    \[
        u_\mu(\bx) = \mu^{\frac{1}{2(q-1)}} v_0(\sqrt{\mu} \bx).
    \]
    In particular, for $0 < \mu < \mu_c$, we have
    \begin{align*}
        \Lambda(\mu)  = \mu^{- \frac{d}{2(q-1)}(q - \frac{d+2}{d})} \| v_0 \|_2^2 \quad \xrightarrow[\mu \to 0]{} \infty, 
        \quad \text{and} \quad 
        \cF_1(u_\mu) = \mu^{\frac{q}{q-1} - \frac{d}{2}} \cF_1(v_0) > 0.
    \end{align*}
    
\end{lemma}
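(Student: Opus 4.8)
The plan is to produce the function $\mu^{1/(2(q-1))} v_0(\sqrt\mu\,\bx)$ as an \emph{explicit} solution of the ground-state equation~\eqref{eq:EL_mu_v2} in the regime $\mu<\mu_c$, and then to conclude by the uniqueness of Proposition~\ref{prop:existence_ground_state}. The key observation that makes this work is that $g_\mu$ degenerates to a pure power where the unknown is small: since $F_1(s)=-s^q$ for $s\le 1$, one has $F_1'(t^2)=-q\,t^{2(q-1)}$ and hence $g_\mu(t)=q\,t^{2q-1}-\mu t$ as long as $0\le t\le 1$. Thus on the class of positive radial decreasing functions bounded by $1$, the equation $-\Delta u = g_\mu(u)$ is literally the subcritical NLS equation $-\Delta u+\mu u = q\,u^{2q-1}$.

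First I would recall that for $1<q<\tfrac{d}{d-2}$ this NLS equation has a unique positive radial decreasing ground state, obtained from $v_0$ by the standard scaling (classical results of Coffman, Kwong and McLeod, \cite{Cof-72, Kwo-89, McL-93}); a one-line computation, with the exponents fixed by the homogeneity relation $2a(q-1)=1$ where $a:=\tfrac{1}{2(q-1)}$, shows that $w_\mu(\bx):=\mu^{a}v_0(\sqrt\mu\,\bx)$ solves $-\Delta w_\mu+\mu w_\mu=q\,w_\mu^{2q-1}$. Next I would check the consistency condition $\|w_\mu\|_\infty\le 1$: the function $w_\mu$ inherits from $v_0$ positivity, radial monotonicity and decay at infinity, and $\|w_\mu\|_\infty=w_\mu(\bnull)=\mu^{a}v_0(\bnull)\le 1$ precisely when $\mu\le v_0(\bnull)^{-2(q-1)}=\mu_c$. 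Hence, for $0<\mu<\mu_c$, $w_\mu$ is a genuine ground state of~\eqref{eq:EL_mu_v2} in the sense of Proposition~\ref{prop:existence_ground_state}, and by the uniqueness asserted there it must coincide with $u_\mu$, which gives the displayed formula.

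The two asymptotic identities then follow by the change of variables $\by=\sqrt\mu\,\bx$: one gets $\Lambda(\mu)=\mu^{2a-d/2}\|v_0\|_2^2$ with $2a-\tfrac d2=-\tfrac{d}{2(q-1)}\bigl(q-\tfrac{d+2}{d}\bigr)<0$ since $q>\tfrac{d+2}{d}$, whence $\Lambda(\mu)\to\infty$; and since $u_\mu\le 1$ forces $F_1(u_\mu^2)=-u_\mu^{2q}$, the Dirichlet and potential terms rescale with the same power (again because $2a(q-1)=1$), giving $\cF_1(u_\mu)=\mu^{q/(q-1)-d/2}\bigl(\|\nabla v_0\|_2^2-\|v_0\|_{2q}^{2q}\bigr)$. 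The last point is the strict positivity of $\cF_1(v_0):=\|\nabla v_0\|_2^2-\|v_0\|_{2q}^{2q}$: I would combine the Nehari identity $\|\nabla v_0\|_2^2+\|v_0\|_2^2=q\|v_0\|_{2q}^{2q}$ (test~\eqref{eq:NLS_v0} against $v_0$) with the Pohozaev identity $\tfrac{d-2}{2}\|\nabla v_0\|_2^2=\tfrac d2\bigl(\|v_0\|_{2q}^{2q}-\|v_0\|_2^2\bigr)$, eliminate $\|v_0\|_2^2$, obtain $\|v_0\|_{2q}^{2q}=\tfrac{2}{d(q-1)}\|\nabla v_0\|_2^2$, and conclude $\cF_1(v_0)=\bigl(1-\tfrac{2}{d(q-1)}\bigr)\|\nabla v_0\|_2^2>0$ exactly because $q>\tfrac{d+2}{d}$.

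I do not expect a serious obstacle: morally the statement just says that in the ``undersaturated'' regime $\mu<\mu_c$ the regularization of $F_1$ is invisible, so the problem collapses to classical NLS. The only places needing care are (i) correctly invoking the existence/uniqueness theory for $v_0$ (which is available precisely in the stated range $q<\tfrac{d}{d-2}$) and verifying that $w_\mu$ qualifies as a ground state in the exact sense used in Proposition~\ref{prop:existence_ground_state}, and (ii) the bookkeeping of the scaling exponents — all of which ultimately rests on the single identity $2a(q-1)=1$.
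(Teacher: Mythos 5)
Your proposal is correct and follows essentially the same route as the paper: identify the regime $\mu<\mu_c$ where only the branch $F_1(s)=-s^q$ is active (so the Euler--Lagrange equation reduces to the subcritical NLS), use the scaling together with the uniqueness of the ground state from Proposition~\ref{prop:existence_ground_state} to identify $u_\mu$, and prove $\cF_1(v_0)>0$ by combining the Nehari and Pohozaev identities exactly as the paper does. The only cosmetic difference is the direction of the scaling (you push $v_0$ forward to a candidate solution and check $\|w_\mu\|_\infty\le 1$, whereas the paper rescales $u_\mu$ backward and observes the second branch of the equation is never triggered), which is the same argument.
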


\begin{proof}
        The existence of a ground state solution for~\eqref{eq:NLS_v0} was first proved by Kwong~\cite{Kwo-89}, and follows from arguments similar to the one used in Proposition~\ref{prop:existence_ground_state}.  Now, we perform the scaling $u(\bx) = \alpha^{-1} v(\beta \bx)$, and find that $u$ solves~\eqref{eq:EL_mu} iff $v$ solves
    \begin{equation} \label{eq:scaling_EL_alpha_beta}
        - \Delta v + \frac{\mu}{\beta^2} v = \begin{cases}
            q \beta^{-2} \alpha^{-2(q-1)} v^{2q-1} & \quad \text{for} \quad v(\bx) \le \alpha \\
            - \frac{\fb}{\beta^2} v + \fc r \beta^{-2} \alpha^{-2(r-1)} v^{2r-1} & \quad \text{for} \quad v(\bx) > \alpha.
        \end{cases}
    \end{equation}
    Choosing $\beta^2 = \alpha^{-2(q-1)}=\mu $ gives
    \begin{equation} \label{eq:EL_v0}
        - \Delta v + v = \begin{cases}
            q v^{2q-1} & \quad \text{for} \quad v(\bx) \le \mu^{\frac{-1}{2(q-1)}} \\
            - \frac{\fb}{\mu} v + \fc r \mu^{\frac{r-q}{q-1}} v^{2r-1} & \quad \text{for} \quad v(\bx) > \mu^{\frac{-1}{2(q-1)}} .
        \end{cases}
    \end{equation}
    Since the function $v_0$ is radial decreasing, we have $v_0(\bx) \le v_0(\bnull)$. In the case $\mu < \mu_c$, this implies that $v_0(\bx) \le \mu^{\frac{-1}{2(q-1)}} $ pointwise, and the second condition is never satisfied. We deduce that $v_0$ is a ground state solution of~\eqref{eq:EL_v0}.  By uniqueness of the ground state, this implies that $u_\mu(\bx) = \mu^{\frac{1}{2(q-1)}} v_0(\sqrt{\mu} \bx)$. The scaling for $\Lambda(\mu)$ and $\cF_1(u_\mu)$ are straightforward (note that $F_1(u_\mu^2) = - u_\mu^{2q}$ for $\mu < \mu_c$). It remains to prove that $\cF_1(v_0) > 0$.
    
    \medskip
    
    We multiply~\eqref{eq:NLS_v0} by $v_0$ and integrate, and by $x \cdot \nabla v_0$ and integrate to obtain respectively
    \[
        \int_{\R^d} | \nabla v_0 |^2 = - \| v_0 \|^2_2 + q \int_{\R^d} | v_0 |^{2q}, 
        \quad \text{and} \quad
        \frac{d-2}{d} \int_{\R^d} | \nabla v_0 |^2 =  - \| v_0 \|^2_2 + \int_{\R^d} | v_0 |^{2q}.
    \]
    The second equality is the Pohozaev's identity. Taking the difference gives
    \[
        \frac{2}{d} \int_{\R^d} | \nabla v_0 |^2 = (q-1) \int_{\R^d} | v_0 |^{2q}.
    \]
    This leads to $\cF_1(v_0) = \| \nabla v_0 \|^2_2 \frac{1}{q-1}\left( q - \frac{d+2}{d}\right) > 0$.
\end{proof}

\medskip

The critical and supercritical case are more involved. We will use results by Berestycki and Lions from~\cite{BerLio-83, BerLio-83b} following~\cite{ColGlaMar-78}. We introduce the functions
\[
G_\mu(t) :=G_*(t) - \frac12 \mu t^2, \quad \text{with} \quad
G_*(t) := - \frac12 F_1(t^2) = \begin{cases}
    \frac12 t^{2q} & \quad t \le 1 \\
    \frac12 \fa - \frac{\fb}{2} t^2 + \frac{\fc}{2} t^{2r} & \quad t \ge 1,
\end{cases}
\]
so that $g_\mu = G_\mu'$. We also consider the corresponding optimization problems $M(G_\mu)$, where we define
\begin{equation} \label{eq:M(G)}
    M(G) := \sup \left\{ \int_{\R^d} G(v) , \quad v \in \dot{H}^1(\R^d), \ \| \nabla v \|_{L^2}^2 \le 1 \right\}.
\end{equation}
We recall that $\dot{H}^1(\R^d)$ is the homogenous Sobolev space of $\R^d$. It is the completion of $C^\infty_0(\R^d)$ for the norm $u \mapsto \| \nabla u \|_2^2$. Recall the Sobolev embedding that for $d \ge 3$, we have $\dot{H}^1(\R^d) \hookrightarrow L^{\frac{2d}{d-2}}(\R^d)$. Problem~\eqref{eq:M(G)} is a slight modification of the dual version of the problem studied in~\cite{BerLio-83, BerLio-83b}, see~\cite[Remark 3.2]{BerLio-83}. We prefer to work with this version, as the optimization space is independent of $G$. This will be more convenient when comparing the problems $M(G_\mu)$ for different values of $\mu$.

\medskip

We will use results from~\cite{BerLio-83, BerLio-83b}. For the sake of clarity, let us recall the main results that we will use. The first main result of~\cite{BerLio-83} is the following one.
\begin{lemma}[From \cite{BerLio-83}, Theorem 2]
    \label{lem:BL83_case1}
    Assume $d \ge 3$ , and that $g := G'$ satisfies the three following conditions:
  \begin{equation} \label{eq:condition_g}
        - \infty < \lim_{t \to 0} \frac{g(t) }{t} < 0, \quad  
        \lim_{t \to + \infty} \frac{g(t)}{t^\frac{d+2}{d-2}} \le 0, \quad
        \exists \zeta > 0 \quad  \text{s.t.} \quad G(\zeta) > 0.
    \end{equation}
    Then the problem $M(G)$ has an optimizer $v \in H^1(\R^d)$, which is positive radial decreasing. In addition, $v$ satisfies the Euler--Lagrange equation
    \[
        - \Delta v = \theta g(v), \quad \text{with} \quad \theta :=  \left( \frac{d-2}{2d} \right) \frac{1}{M(G)} > 0.
    \] 
    All such optimizers satisfy $\| \nabla v \|_{L^2} = 1$ and are of class $C^2$. Finally, all positive radial decreasing maximizing sequences are pre-compact in $H^1(\R^d)$, and converge to such an optimizer, up to a subsequence.
\end{lemma}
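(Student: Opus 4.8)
The plan is to recognize problem~\eqref{eq:M(G)} as the dual form of the constrained minimization solved in~\cite{BerLio-83} and to adapt the classical Berestycki--Lions argument. Up to the rescaling $v\mapsto v(\cdot/\sigma)$, under which $\int_{\R^d}G(v)$ is multiplied by $\sigma^d$ and $\|\nabla v\|_{L^2}^2$ by $\sigma^{d-2}$, one checks that $M(G)=S^{-d/(d-2)}$ where $S:=\inf\{\|\nabla v\|_{L^2}^2:\ v\in\dot H^1(\R^d),\ \int_{\R^d}G(v)=1\}$, and that optimizers of the two problems coincide after scaling; so one could simply invoke~\cite[Theorem~2]{BerLio-83} and translate. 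I would nonetheless like to sketch the self-contained version. First, $0<M(G)<\infty$: from $\lim_{t\to0}g(t)/t<0$ one gets $G(t)\le-c_0t^2$ for $|t|$ small, and from $\lim_{t\to\infty}g(t)/t^{(d+2)/(d-2)}\le0$ one gets, for every $\varepsilon>0$, a bound $G(t)\le C_\varepsilon+\varepsilon|t|^{2^*}$ with $2^*=\frac{2d}{d-2}$; combined with the Sobolev inequality $\|v\|_{2^*}\le C_d\|\nabla v\|_2$ this bounds $\int_{\R^d}G(v)$ from above on the constraint set, hence $M(G)<\infty$. For positivity, choose $w\in C^\infty_0(\R^d)$ equal to $\zeta$ on a large ball so that $\int_{\R^d}G(w)>0$; then $w_\sigma:=w(\cdot/\sigma)$ with $\sigma$ small is admissible and $\int_{\R^d}G(w_\sigma)=\sigma^d\int_{\R^d}G(w)>0$.

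Next I would carry out the compactness argument. Given a maximizing sequence $(v_n)$, I replace each $v_n$ by its symmetric decreasing rearrangement: equimeasurability and $G(0)=0$ preserve $\int_{\R^d}G(v_n)$, while Pólya--Szegő does not increase $\|\nabla v_n\|_2$, so one may assume the $v_n$ are nonnegative radial decreasing (for the precompactness statement one simply starts from such a sequence). Using $G(t)\le-c_0t^2$ near $0$, the critical bound at infinity, and $|\{v_n\ge\delta\}|\le\delta^{-2^*}\|v_n\|_{2^*}^{2^*}$ (which is $\lesssim\delta^{-2^*}$ by Sobolev), the fact that $\int_{\R^d}G(v_n)\ge M(G)/2>0$ for large $n$ forces a uniform bound $\|v_n\|_{L^2}\le C$; thus $(v_n)$ is bounded in $H^1_{\rm rad}(\R^d)$. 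By the Strauss compact embedding $H^1_{\rm rad}(\R^d)\hookrightarrow L^p(\R^d)$ for $2<p<2^*$, up to a subsequence $v_n\to v_*$ strongly in these $L^p$, weakly in $H^1$, and a.e., with a radial domination. Splitting $G=G_1+G_2$ with $G_1:=\min(G,0)\le0$ and $G_2:=\max(G,0)\ge0$ --- and noting from~\eqref{eq:condition_g} that $G_2$ vanishes near $0$ and is $o(t^{2^*})$ at infinity --- one gets $\int_{\R^d}G_2(v_n)\to\int_{\R^d}G_2(v_*)$ by the associated compactness lemma, while Fatou applied to $-G_1\ge0$ gives $\int_{\R^d}G_1(v_*)\ge\limsup_n\int_{\R^d}G_1(v_n)$; adding, $\int_{\R^d}G(v_*)\ge\limsup_n\int_{\R^d}G(v_n)=M(G)>0$. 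Hence $v_*\not\equiv0$, and since $\|\nabla v_*\|_2\le\liminf\|\nabla v_n\|_2\le1$ the function $v_*$ is admissible, so it is an optimizer, nonnegative radial decreasing.

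It then remains to saturate the constraint, identify $\theta$, obtain regularity, and upgrade to strong convergence. If $\|\nabla v_*\|_2<1$, then $v_*(\cdot/\sigma)$ with $\sigma$ slightly above $1$ stays admissible while strictly increasing $\int_{\R^d}G$ (using $M(G)>0$), contradicting optimality; so $\|\nabla v_*\|_2=1$, and the same rescaling applied to the $v_n$ forces $\|\nabla v_n\|_2\to1=\|\nabla v_*\|_2$, which together with weak $H^1$ convergence upgrades the gradients to strong $L^2$ convergence, and (with the uniform $L^2$ bound and a Brezis--Lieb type argument for the $L^2$ part) gives strong $H^1$ convergence --- this is the precompactness statement. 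Since $v_*$ maximizes $\int_{\R^d}G$ under $\|\nabla v\|_2^2=1$, there is $\lambda>0$ with $-\Delta v_*=\frac{1}{2\lambda}g(v_*)$; multiplying by $x\cdot\nabla v_*$ and integrating yields the Pohozaev identity $\frac{d-2}{2}\|\nabla v_*\|_2^2=\frac{d}{2\lambda}\int_{\R^d}G(v_*)$, i.e. $\frac{d-2}{2}=\frac{d\,M(G)}{2\lambda}$, so $\frac{1}{2\lambda}=\frac{d-2}{2d\,M(G)}=\theta$ as claimed. Finally, elliptic bootstrap from $v_*\in L^{2^*}$ and $|g(t)|\lesssim|t|+|t|^{2^*-1}$ gives $v_*\in W^{2,p}_{\rm loc}$ for all $p<\infty$, hence $v_*\in C^{1,\alpha}_{\rm loc}$ and then $v_*\in C^2$ since $g$ is continuous; positivity $v_*>0$ follows from the strong maximum principle applied to $(-\Delta+V)v_*=0$ with $V:=-\theta g(v_*)/v_*\in L^\infty$, and strict radial monotonicity from the radial ODE satisfied by $v_*$.

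The main obstacle is the lack of compactness of $\dot H^1(\R^d)\hookrightarrow L^{2^*}(\R^d)$: a maximizing sequence could a priori lose its energy by spreading to arbitrarily large scales or by concentrating at arbitrarily small ones. Both scenarios are ruled out precisely by the three hypotheses in~\eqref{eq:condition_g}: the strict negativity of $G$ near $0$ penalizes spreading and produces the $L^2$ bound confining the sequence to $H^1$; the sign condition $G(\zeta)>0$ makes $M(G)>0$, forcing the weak limit to be nontrivial; and the mere criticality (not supercriticality) of $g$ at infinity is what keeps $M(G)$ finite and, after symmetrization, lets Strauss' lemma control the ``good part'' $G_2$. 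As noted above, one may instead simply quote~\cite[Theorem~2]{BerLio-83} verbatim, the only bookkeeping being the passage between the ``$\int G=1$'' and ``$\|\nabla v\|_2^2\le1$'' normalizations and the corresponding relation $\theta=\frac{d-2}{2d\,M(G)}$ between the Lagrange multipliers.
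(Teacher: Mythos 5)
The paper offers no proof of this lemma: it is imported verbatim from Berestycki--Lions, with the passage between their normalization $\int G=1$ and the present one $\|\nabla v\|_{L^2}^2\le 1$ handled by \cite[Remark 3.2]{BerLio-83}. Your primary route (quote \cite[Theorem 2]{BerLio-83} and translate via the scaling relation $M(G)=S^{-d/(d-2)}$, identifying $\theta$ through Pohozaev) is therefore exactly what the paper does, and your bookkeeping of the multiplier is correct. Your self-contained sketch is also a faithful reproduction of the original argument (rearrangement, the $L^2$ bound forced by $G(t)\le -c_0t^2$ near $0$, Strauss compactness for the positive part $G_2$, Fatou for $G_1$, saturation of the constraint by dilation). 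The one step you gloss is the strong $L^2$ convergence of maximizing sequences: a uniform $L^2$ bound plus weak convergence and Brezis--Lieb does not by itself prevent the $L^2$ mass from escaping to infinity. The standard fix, which is how Berestycki--Lions close this, is to observe that $\int G_2(v_n)\to\int G_2(v_*)$ and $\int G(v_n)\to\int G(v_*)$ force $\int(-G_1(v_n))\to\int(-G_1(v_*))$, and then to use $-G_1(t)\ge c_0t^2$ for $|t|\le\delta$ together with a.e.\ convergence (a Scheff\'e/Brezis--Lieb argument applied to the nonnegative functions $-G_1(v_n)$, not to $v_n^2$ directly) to recover convergence of the small-amplitude part of $\|v_n\|_{L^2}^2$; the large-amplitude part is already controlled by the strong $L^p$ convergence for $2<p<2^*$. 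This is a minor omission in a sketch of a quoted classical result, not a flaw in the approach.
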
 
We note that the condition $d \ge 3$ is always satisfied in the (super--)critical case. The expression of $\theta$ comes from the Pohozaev's identity. Finally, setting $u(\bx)  := v (\theta^{-1/2} \bx)$, we can check that $u$ is a solution to 
\[
    - \Delta u = g(u).
\]

\medskip

From~\cite[Section 5]{BerLio-83}, we also record the result corresponding to the <<zero-mass>> case. 
\begin{lemma}[From \cite{BerLio-83}, Theorem 4]
    \label{lem:BL83_case2}
    Assume $d \ge 3$ , and that $g := G'$ satisfies the three conditions
    \begin{equation} \label{eq:condition_g_2}
        g(0) = 0, \ \text{and} \ 
        \lim_{t \to 0} \frac{g(t)}{t^{\frac{d+2}{d-2}}} \le 0, \quad  
        \lim_{t \to + \infty} \frac{g(t)}{t^\frac{d+2}{d-2}} \le 0, \quad
        \exists \zeta > 0 \quad  \text{s.t.} \quad G(\zeta) > 0.
    \end{equation}
    Then the results of Lemma~\ref{lem:BL83_case1} holds, upon replacing $H^1(\R^d)$ by $\dot{H}^1(\R^d)$.
\end{lemma}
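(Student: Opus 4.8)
The plan is to follow the variational argument of Berestycki and Lions~\cite[Theorem~4 and Section~5]{BerLio-83}, the only change being that the natural energy space is now the homogeneous Sobolev space $\dot H^1(\R^d)$ instead of $H^1(\R^d)$. Since $d \ge 3$, we have at our disposal the Sobolev embedding $\dot H^1(\R^d) \hookrightarrow L^{2^*}(\R^d)$ with $2^* := \tfrac{2d}{d-2}$, as well as the radial decay estimate $|v(\bx)| \le C_d \|\nabla v\|_{L^2}\, |\bx|^{-\frac{d-2}{2}}$ valid for radial nonincreasing $v \in \dot H^1(\R^d)$. First I would check that $T(v) := \int_{\R^d} G(v)$ is well defined in $[-\infty, +\infty)$ and bounded above on $\{ v \in \dot H^1(\R^d) : \|\nabla v\|_{L^2}^2 \le 1\}$: the hypotheses $g(0)=0$ and the one--sided bounds on $g(t)/t^{(d+2)/(d-2)}$ near $0$ and near $\infty$ integrate to $G^{+}(t) \le C |t|^{2^*}$ with, moreover, $G^{+}(t) = o(|t|^{2^*})$ both as $t\to 0$ and as $t\to\infty$; combined with Sobolev this yields $T(v) \le C\|\nabla v\|_{L^2}^{2^*} \le C$, so $M(G) < \infty$. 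That $M(G)>0$ follows as in~\cite{BerLio-83}: from any $\zeta$ with $G(\zeta)>0$, a plateau function equal to $\zeta$ on a large ball has positive energy, and the scaling $v\mapsto v(\cdot/\lambda)$ multiplies $T$ by $\lambda^d$ and $\|\nabla\cdot\|_{L^2}^2$ by $\lambda^{d-2}$, so after normalizing the gradient one gets $M(G)\ge c(\zeta)>0$.

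Next I would take a maximizing sequence $(v_n)$; by Schwarz symmetrization (which does not increase $\|\nabla\cdot\|_{L^2}$ by the P\'olya--Szeg\H{o} inequality and leaves $T$ unchanged, as $T$ depends only on the distribution function) one may assume each $v_n$ radial, nonnegative and nonincreasing. Up to a subsequence, $v_n \rightharpoonup v_*$ weakly in $\dot H^1(\R^d)$, pointwise a.e.\ and strongly in $L^p_{\rm loc}$ for $p<2^*$, with $\|\nabla v_*\|_{L^2}\le 1$. The key step, which I expect to be the main obstacle, is to pass to the limit $T(v_n)\to T(v_*)$ without any $L^2$ control. I would split $\int_{\R^d}G(v_n)$ into three regions. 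On the ``peak'' $\{v_n\ge A\}$ and the ``tail'' $\{v_n\le\delta\}$, one has $|G(v_n)|\le\eta\,|v_n|^{2^*}$, which is uniformly small using respectively the subcriticality of $G$ at $\infty$ and at $0$ together with the uniform bound $\|v_n\|_{L^{2^*}}\le C$. The ``intermediate'' region $\{\delta<v_n<A\}$ is, by radial monotonicity and the radial decay bound, contained in a fixed ball $B_{\rho(\delta)}$ with $\rho(\delta)=(C_d/\delta)^{2/(d-2)}$, and there $G(v_n)\to G(v_*)$ in $L^1$ by dominated convergence, with dominating function $\big(\max_{[\delta,A]}|G|\big)\1_{B_{\rho(\delta)}}$. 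Letting $\eta\to 0$ gives $T(v_*)=M(G)$, hence $\|\nabla v_*\|_{L^2}=1$ (otherwise rescaling $v_*(\cdot/\lambda)$ with a suitable $\lambda>1$ would strictly increase $T$ while keeping the gradient $\le 1$); and since $\nabla v_n$ converges weakly with convergence of norms, $v_n\to v_*$ strongly in $\dot H^1(\R^d)$, which is the asserted precompactness of positive radial decreasing maximizing sequences.

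Finally, $v_*$ being a constrained maximizer with active constraint, there is a Lagrange multiplier and $-\Delta v_* = \theta\, g(v_*)$; positivity of $\theta$ follows because any admissible direction increasing $T$ must increase $\|\nabla v_*\|_{L^2}$. Testing against $\bx\cdot\nabla v_*$ (Pohozaev's identity) gives $\tfrac{d-2}{2}\|\nabla v_*\|_{L^2}^2 = d\,\theta\,T(v_*)$, i.e.\ $\theta=\tfrac{d-2}{2d\,M(G)}$, exactly as in Lemma~\ref{lem:BL83_case1}. Elliptic bootstrap on $-\Delta v_* = \theta\, g(v_*)$, using $v_*\in L^{2^*}(\R^d)$ and the growth of the continuous (here even $C^1$) function $g$, upgrades $v_*$ to $C^2$, while positivity and radial monotonicity pass to the limit. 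Setting $u(\bx):=v_*(\theta^{-1/2}\bx)$ produces a solution of $-\Delta u = g(u)$, as in the statement. All the arguments are those of~\cite{BerLio-83}; the single point requiring care, beyond routine adaptation, is the compactness of the intermediate region and the smallness of the peak/tail contributions in $\dot H^1(\R^d)$, which is where the hypotheses $g(0)=0$ and the double--sided subcriticality relative to $t^{(d+2)/(d-2)}$ are genuinely used.
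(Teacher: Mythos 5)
The paper does not actually prove this lemma: it is quoted from Berestycki--Lions \cite[Theorem 4]{BerLio-83} (the ``zero-mass'' case), so there is no in-paper argument to compare yours against. Your reconstruction follows the classical constrained-maximization route of that reference, and the overall architecture --- Sobolev embedding $\dot H^1(\R^d)\hookrightarrow L^{2^*}(\R^d)$, the radial decay bound $v(\bx)\le C\|\nabla v\|_{L^2}|\bx|^{-(d-2)/2}$, symmetrization, the peak/intermediate/tail splitting, recovery of the Euler--Lagrange equation and of $\theta$ via Pohozaev --- is the correct one, and is exactly where the hypotheses~\eqref{eq:condition_g_2} enter.

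One step, as written, fails for a general $g$ satisfying~\eqref{eq:condition_g_2}: the claimed bound $|G(v_n)|\le \eta\,|v_n|^{2^*}$ on the peak and tail regions. The hypotheses only control $g$ from \emph{above} near $0$ and near $\infty$ (the stated limits are allowed to equal $-\infty$), so integration yields $G^{+}(t)=o(t^{2^*})$ at both ends but gives no control on $G^{-}$; for instance $g(t)=-\sqrt{t}$ near $0$ is admissible, and then $|G(t)|\sim t^{3/2}\gg t^{2^*}$ there. In particular the convergence $T(v_n)\to T(v_*)$ you aim for is neither available nor true in general. The standard repair --- and what Berestycki--Lions actually do --- is to run your smallness argument for $G^{+}$ only, obtaining $\limsup_n\int G^{+}(v_n)\le\int G^{+}(v_*)$, and to handle $G^{-}$ by Fatou's lemma along the a.e.\ convergent subsequence, $\liminf_n\int G^{-}(v_n)\ge\int G^{-}(v_*)$. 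Together these give the upper semicontinuity $\limsup_n T(v_n)\le T(v_*)$, which is all that is needed to conclude $T(v_*)=M(G)$ and then, exactly as you argue, $\|\nabla v_*\|_{L^2}=1$ and strong convergence in $\dot H^1(\R^d)$. Two smaller points: in the dominated-convergence step the indicator of $\{\delta<v_n<A\}$ need not converge to that of $\{\delta<v_*<A\}$ unless $\delta$ and $A$ are chosen outside the (at most countable) set of levels of $v_*$ of positive measure; and strict positivity of the limit requires the strong maximum principle, not mere passage to the limit of nonnegativity.
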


The differences with the previous case Lemma~\ref{lem:BL83_case1} is that $g(t)/t$ is now allowed to vanish at $0$ (hence the term <<zero mass>>). However, the corresponding solutions may not belong to $L^2(\R^d)$.

\medskip

In our case with $g = g_\mu$, we can check that the conditions~\eqref{eq:condition_g} are satisfied, as $\lim_{t \to 0} g_\mu(t)/t = - \mu < 0$, $\lim_{t \to \infty} g_\mu(t)/t^{\frac{d+2}{d-2}} = 0$ and $G_\mu(t) > 0$ for $t$ large enough. Finally, if $v_\mu$ an optimizer of $M(G_\mu)$, then $u(\bx) := v_\mu(\theta_\mu^{-1/2} \bx)$ is a positive solution of $- \Delta u = g_\mu(u)$. However, we proved in Proposition~\ref{prop:existence_ground_state} that this last PDE has a unique positive solution $u_\mu$. This proves that $u = u_\mu$, and that the problem $M(G_\mu)$ has a unique radial non--increasing optimizer $v_\mu$. The functions $v_\mu$ and $u_\mu$ are linked by the relations
\begin{equation} \label{eq:relation_u_v}
    u_\mu(\bx) := v_\mu (\theta_\mu^{-1/2} \bx), \qquad \theta_\mu :=  \left( \frac{d-2}{2d} \right) \frac{1}{M(G_\mu)} > 0.
\end{equation}

We are now in position to prove the result in the super--critical case.

\begin{lemma}[Limit $\mu \to 0$, super--critical case] Assume $q > \frac{d}{d-2}$. Consider the ``zero-mass'' equation
    \begin{equation}\label{eq:zero-mass}
        -\Delta u = g_*(u), \qquad g_*(t) = G_*'(t) = \begin{cases}
            q t^{2q-1} & \quad t \le 1\\
            -\fb t + \fc r t^{2r-1} & \quad t \ge 1.
        \end{cases}
    \end{equation}
    Then $u_\mu$ converges to the unique positive radial decreasing solution $u_*$ of~\eqref{eq:zero-mass}, strongly in $\dot{H^1}\cap L^{\frac{2d}{d-2}}(\RR^d)$.  In addition, we have
    \begin{equation} \label{eq:F1_u_mu_supercritical}
        \cF_1(u_\mu) \xrightarrow[\mu \to 0]{}  \frac{2}{d}  \left(\frac{d-2}{2d}\frac{1}{M(G_*)}\right)^{\frac{d-2}{d}} > 0.
    \end{equation}
 \end{lemma}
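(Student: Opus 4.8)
The plan is to pass to the dual variational formulation and transfer the strong compactness of maximising sequences (Lemma~\ref{lem:BL83_case2}) from the massive problems $M(G_\mu)$ to the zero-mass problem $M(G_*)$. The set-up: since $g_*=G_*'$ satisfies the conditions~\eqref{eq:condition_g_2} — $g_*(0)=0$, and the limits of $g_*(t)/t^{(d+2)/(d-2)}$ at $0$ and at $\infty$ vanish thanks respectively to $q>\tfrac{d}{d-2}$ and to $r<\tfrac{d+2}{d}$, while $G_*(t)\to+\infty$ — Lemma~\ref{lem:BL83_case2} applies to $M(G_*)$. Together with the uniqueness of the positive radial decreasing solution of~\eqref{eq:zero-mass} (obtained as in Proposition~\ref{prop:existence_ground_state}, cf.\ also~\cite{McL-93,SerTan-00}), this produces a unique positive radial decreasing optimiser $v_*$ of $M(G_*)$, with $\norm{\nabla v_*}_{L^2}=1$, linked to $u_*$ by $u_*(\bx)=v_*(\theta_*^{-1/2}\bx)$, $\theta_*=\tfrac{d-2}{2dM(G_*)}$, exactly as in~\eqref{eq:relation_u_v}. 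It then suffices to prove $v_\mu\to v_*$ in $\dot{H}^1\cap L^{\frac{2d}{d-2}}(\R^d)$ and $\theta_\mu\to\theta_*$.

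First I would show $M(G_\mu)\to M(G_*)$: since $G_\mu\le G_*$ pointwise, $\mu\mapsto M(G_\mu)$ is non-increasing with $M(G_\mu)\le M(G_*)$, and approximating $v_*$ by $w\in C^\infty_0(\R^d)$ with $\norm{\nabla w}_{L^2}\le 1$ and $\int G_*(w)$ close to $M(G_*)$ (a routine truncation using $v_*\in\dot{H}^1\cap L^{\frac{2d}{d-2}}$) gives $\liminf_{\mu\to0}M(G_\mu)\ge\int G_*(w)$, hence $M(G_\mu)\to M(G_*)$ and $\theta_\mu\to\theta_*$. Next comes the crucial a priori estimate: for $\mu>0$, $v_\mu$ belongs to $H^1(\R^d)$ and is admissible for $M(G_*)$, so $\int G_*(v_\mu)\le M(G_*)$, whence
\[
  \tfrac{\mu}{2}\norm{v_\mu}_{L^2}^2=\int_{\R^d}\big(G_*(v_\mu)-G_\mu(v_\mu)\big)=\int_{\R^d}G_*(v_\mu)-M(G_\mu)\le M(G_*)-M(G_\mu)\xrightarrow[\mu\to0]{}0 .
\]
In particular $\int G_*(v_\mu)=M(G_\mu)+\tfrac{\mu}{2}\norm{v_\mu}_{L^2}^2\to M(G_*)$, so along any sequence $\mu_n\to0$ the functions $v_{\mu_n}$ form a positive radial decreasing maximising sequence for $M(G_*)$.

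The compactness assertion of Lemma~\ref{lem:BL83_case2} then gives, up to a subsequence, $v_{\mu_n}\to\bar v$ in $\dot{H}^1(\R^d)$ — hence in $L^{\frac{2d}{d-2}}$ by the Sobolev embedding — with $\bar v$ an optimiser of $M(G_*)$; by uniqueness $\bar v=v_*$, and subsequence-independence upgrades this to $v_\mu\to v_*$ in $\dot{H}^1\cap L^{\frac{2d}{d-2}}(\R^d)$. Combined with $\theta_\mu\to\theta_*>0$ and the continuity of dilations on these spaces, $u_\mu=v_\mu(\theta_\mu^{-1/2}\cdot)\to v_*(\theta_*^{-1/2}\cdot)=u_*$ in $\dot{H}^1\cap L^{\frac{2d}{d-2}}(\R^d)$, which is the first claim. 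For the energy, the Pohozaev identity for $-\Delta u_\mu=g_\mu(u_\mu)$ reads $\int G_\mu(u_\mu)=\tfrac{d-2}{2d}\norm{\nabla u_\mu}_{L^2}^2$, so, using $F_1(t^2)=-2G_*(t)=-2G_\mu(t)-\mu t^2$,
\[
  \cF_1(u_\mu)=\norm{\nabla u_\mu}_{L^2}^2-2\int_{\R^d}G_*(u_\mu)=\tfrac{2}{d}\norm{\nabla u_\mu}_{L^2}^2-\mu\norm{u_\mu}_{L^2}^2 .
\]
Here $\norm{\nabla u_\mu}_{L^2}^2\to\norm{\nabla u_*}_{L^2}^2=\theta_*^{(d-2)/2}$ (from $\norm{\nabla v_*}_{L^2}=1$ and the scaling $u_*=v_*(\theta_*^{-1/2}\cdot)$), while $\mu\norm{u_\mu}_{L^2}^2=\theta_\mu^{d/2}\,\mu\norm{v_\mu}_{L^2}^2\le 2\theta_\mu^{d/2}\big(M(G_*)-M(G_\mu)\big)\to0$ by the a priori estimate; hence $\cF_1(u_\mu)\to\tfrac{2}{d}\theta_*^{(d-2)/2}>0$, which is~\eqref{eq:F1_u_mu_supercritical}.

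The hard part is the genuinely massless nature of the limit: the optimiser $v_*$ need not belong to $L^2(\R^d)$ (it decays only like $|\bx|^{-(d-2)}$ in the borderline dimensions), so $\norm{v_\mu}_{L^2}$ may diverge and one cannot pass to the limit naively in the mass or in the energy. The device that unlocks the argument is the estimate $\mu\norm{v_\mu}_{L^2}^2\le 2(M(G_*)-M(G_\mu))\to0$ of the second step, which at once identifies $(v_\mu)$ as a maximising sequence for $M(G_*)$ and kills the error term $\mu\norm{u_\mu}_{L^2}^2$ in the energy; the remainder is then a routine use of the compactness in Lemma~\ref{lem:BL83_case2} together with the uniqueness of the zero-mass ground state.
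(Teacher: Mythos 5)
Your proof follows essentially the same route as the paper: pass to the dual problems $M(G_\mu)$, show $M(G_\mu)\to M(G_*)$, extract the key estimate $\mu\|v_\mu\|_{L^2}^2\le 2\bigl(M(G_*)-M(G_\mu)\bigr)\to 0$, identify $(v_{\mu_n})$ as a maximizing sequence for $M(G_*)$, and conclude by the compactness of Lemma~\ref{lem:BL83_case2}, uniqueness of the optimizer, and the Pohozaev identity. One step is handled slightly differently and arguably more cleanly: for the lower bound $\liminf_\mu M(G_\mu)\ge M(G_*)$ you fix a compactly supported near-optimizer $w$ first and then send $\mu\to0$, whereas the paper couples the cutoff radius to $\mu$ (taking $R_n=\mu_n^{-1/2}$) and splits the cases $d\ge5$ and $d\in\{3,4\}$; your version avoids that dimension split, at the price of hiding in the phrase ``routine truncation'' exactly the estimates ($\|\nabla(\chi_Rv_*)\|_{L^2}^2=1+O(R^{-(d-2)})$ plus a small dilation, and $\int G_*(\chi_Rv_*)\to\int G_*(v_*)$) that the paper writes out. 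Your final value $\tfrac2d\,\theta_*^{(d-2)/2}$ agrees with the paper's proof (the exponent $(d-2)/d$ in the lemma statement appears to be a typo).

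The one genuine defect is your justification of uniqueness for the zero-mass equation~\eqref{eq:zero-mass}. You invoke ``as in Proposition~\ref{prop:existence_ground_state}, cf.\ \cite{McL-93,SerTan-00}'', but those uniqueness theorems require $g(t)<0$ on an interval $(0,T]$, i.e.\ a genuine mass term with $\lim_{t\to0}g(t)/t<0$; here $g_*(t)=qt^{2q-1}>0$ for all $t>0$ (indeed $\fc r-\fb=q>0$, so $g_*>0$ on all of $(0,\infty)$), and the argument of Proposition~\ref{prop:existence_ground_state} does not apply. Uniqueness in the zero-mass setting is a different result: one first needs the decay $|\bx|^{d-2}u_*(\bx)\to\ell\in(0,\infty)$ (the paper cites \cite{FluMul-98}) and then the zero-mass uniqueness theorem of \cite{Tan-01}. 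The statement you need is true, so this does not break the proof, but the reference you give would not establish it.
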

 
 \begin{proof}
     Note that the function $g_*$ satisfies the hypothesis of Lemma~\ref{lem:BL83_case2} (since $q > \frac{d}{d-2}$), but {\em does not} satisfy the ones of Lemma~\ref{lem:BL83_case1}, as $\lim g(t)/t = 0$. Lemma~\ref{lem:BL83_case2} then shows that $u_*$ is of class $C^2$. In addition, using~\cite[Theorem 5]{FluMul-98}, we have that $\lim_{|x|\to \infty}|x|^{d-2}u_*(x)$ exists and is a positive finite number. As a consequence, we can use~\cite[Theorem 2]{Tan-01} to prove the uniqueness of positive solution of~\eqref{eq:zero-mass}. So the problem $M(G_*)$ has a unique optimizer, that we denote by $v_*$. The functions $u_*$ and $v_*$ are linked by a relation similar to~\eqref{eq:relation_u_v}. Note that the decay of $u_*$ (hence $v_*$) implies that $u_*$ and $v_*$ are in $L^2(\R^d)$ only in dimensions $d \ge 5$. 
     
     \medskip
     
    For any $\mu>0$, let $v_\mu \in H^1(\R^d)$ be the unique positive radial decreasing optimizer of $M(G_\mu)$, so that
    \[
        \int_{\R^d}G_\mu(v_\mu)=M(G_\mu)>0, \quad \| \nabla v_\mu \|_{L^2}^2 = 1.
    \]
   Our goal is to show that, for any sequence $(\mu_n)_n$ such that $\lim_{n\to\infty}\mu_n=0$, $(v_{\mu_n})_{n}$ is a maximizing sequence for $M(G_*)$. Let us first show that
    \begin{equation}
        \label{eq:lim_MGmu_0}
        \lim_{n\to \infty}M(G_{\mu_n})=M(G_*).
    \end{equation}
    First, since $v_{\mu_n}\in H^1(\R^d)$, we have
    \begin{align}\label{eq:lim_MGmu_0_upper}
        M(G_*)\ge \int_{\R^d} G_{*}(v_{\mu_n})= \int_{\R^d} G_{\mu_n}(v_{\mu_n})+\frac{1}{2}\mu_n \int_{\R^d} |v_{\mu_n}|^2\ge M(G_{\mu_n}).
    \end{align}
    For the converse inequality, let us first consider the case $d \ge 5$. In this case, we have $v_*(\bx )\simeq_{\bx \to \ii} \frac{1}{\av{\bx }^{d-2}}$, so that $v_*\in L^2(\R^d)$, and
    \begin{align*}
        M(G_{\mu_n})\ge \int_{\R^d} G_{\mu_n}(v_{*})= \int_{\R^d} G_{*}(v_{*})-\frac{1}{2}\mu_n \int_{\R^d} |v_{*}|^2= M(G_{*})(1+o(1)).
    \end{align*}
    This proves~\eqref{eq:lim_MGmu_0} in dimensions $d \ge 5$. For $d\in \set{3,4}$, the function  $v_*(\bx )$ is not in $L^2(\RR^d)$. We use a cutoff function and proceed similarly. Let $R>0$ and let $0\leq \chi_R\leq 1$ a $C^\ii_c$ a cutoff function such that $\chi(s )=1$ for $\av{s }\leq R$, $\chi(s )=0$ for $\av{s }\geq 2R$ and $\av{\chi'_R}\leq 2/R$. Denote by $v_R=\chi_Rv_*$. A straightforward computation gives, for $R$ large enough,
    \begin{align*}
        &\int_{\R^d} |\nabla v_R|^2=\int_{\R^d} |\nabla v_*|^2 + O(R^{-(d-2)})\\
        &\int_{\R^d} G_*(v_R)=\int_{\R^d} G_*(v_*) + O(R^{d-2q(d-2)})\\
        &\int_{\R^d} |v_R|^2= 
        \begin{cases}
            O(R) & d=3\\
            O(\log(R)) & d=4.
        \end{cases}
    \end{align*}
  Let $\delta_R :=\|\nabla v_R\|_{L^2}^2$ and define $\tilde v_R(\bx)=v_R(\delta_R^{1/(d-2)}\bx)$,  so that $\|\nabla\tilde v_R\|_{L^2}^2=1$ and $\tilde v_R(\bx)$ is admissible  for the optimization problem $M(G_{\mu_n})$ for any $n\in \N$. We have
    \begin{align*}
        M(G_{\mu_n})\ge \int_{\R^d} G_{\mu_n}(\tilde v_R)=\delta_R^{-\frac{d}{d-2}}\left(\int_{\R^d} G_{*}(v_R)-\frac{\mu_n}{2}\int_{\R^d}v_R^2\right).
    \end{align*}
    Choosing $R = R_n=\mu_n^{-1/2}$ gives
    \begin{align*}
        \int_{\R^d} G_{*}(v_{R_n})-\frac{\mu_n}{2}\int_{\R^d}v_{R_n}^2&=\int_{\R^d} G_*(v_*)\left(1+O(\mu_n^{(2q(d-2)-d)/2})+ \begin{cases}
            O(\mu_n^{1/2}) & d=3\\
            O(\mu_n\log(\mu_n)) & d=4
        \end{cases}\right)\\
        &= M(G_*)(1+o(1))
    \end{align*} 
    and 
    \begin{align*}
        \delta_{R_n}^{-\frac{d}{d-2}}=\|\nabla v_*\|_{L^2}^{-\frac{2d}{d-2}}(1+O(\mu_n^{(d-2)/2}))^{-\frac{2d}{d-2}}\ge (1+o(1)).
    \end{align*}
    As a conclusion, $M(G_{\mu_n})\ge M(G_*)(1+o(1))$, which concludes the proof of~\eqref{eq:lim_MGmu_0}. 
    
    \medskip
    
    Now,~\eqref{eq:lim_MGmu_0}, together with~\eqref{eq:lim_MGmu_0_upper}, implies
    \begin{equation}
        \label{eq:convL2supercritical}
        \lim_{n\to +\infty}\mu_n\|v_{\mu_n}\|_{L^2}^2=0.
    \end{equation}
    As a consequence, 
    \begin{equation*}
        \int_{\R^d} G_*(v_{\mu_n})=M(G_{\mu_n})+\frac{\mu_n}{2}\int_{\R^d}v_{\mu_n}^2\to M(G_*)
    \end{equation*}
    and $(v_{\mu_n})_n$ is a maximizing sequence for $M(G_*)$. Together with Lemma~\ref{lem:BL83_case2}, $(v_{\mu_n})_n$ converges in $\dot{H^1}(\R^d)$ to $v$, an optimizer of $M(G_*)$, up to a subsequence. Since the optimizer is unique, we deduce that the full sequence $(v_{\mu_n})_n$ converges to $v_*$ in $\dot{H^1}$. To conclude the proof, we recall that $u_\mu(\bx) = v_\mu(\theta_\mu^{-1/2} \bx)$ with $\theta_\mu = \left( \frac{d-2}{2d} \right) \frac{1}{M(G_\mu)}$ and $u_*(\bx) = v_*(\theta_*^{-1/2} \bx)$ with  $\theta_* = \left( \frac{d-2}{2d} \right) \frac{1}{M(G_*)}$. So the convergence $M(G_\mu)  \to M(G_*)$ implies the convergence $u_\mu$ to $u_*$ in $\dot{H}^1(\R^d)$.
    
    \medskip
    
    It remains to prove~\eqref{eq:F1_u_mu_supercritical}, namely that $\cF_1(u_\mu) > 0$ for $\mu$ small enough. Recall the scaling~\eqref{eq:relation_u_v}, which implies
    \[
         \int_{\R^d} G_\mu(u_\mu) = \theta_\mu^{\frac{d}{2}} M(G_\mu) \xrightarrow[\mu \to 0]{} \theta_*^\frac{d}{2}M(G_*), 
         \quad \text{and} \quad \mu \int_{\R^d} | u_\mu |^2 = \mu \theta_\mu^{\frac{d}{2}} \int_{\R^d} | v_\mu |^2  \xrightarrow[\mu \to 0]{} 0,
    \]
    where we used~\eqref{eq:convL2supercritical} in the last limit. On the other hand, since $- \Delta u_\mu = G_\mu'(u_\mu)$, we have the Pohozaev's identity
     \[
        \| \nabla u_\mu \|_{L^2}^2 = \frac{2 d}{d-2} \int_{\R^d} G_\mu(u_\mu).
    \]
    Together with the fact that $F_1(u^2) = - 2 G_\mu(u) -  \mu u^2$, we get
    \begin{align*}
        \cF_1(u_\mu) & 
        =  \| \nabla u_\mu \|^2 - 2 \int_{\R^d} G_\mu(u_\mu) - \mu \int_{\R^d} | u_\mu |^2 
        \xrightarrow[\mu \to 0]{}  \frac{2}{d}  \left( \frac{d-2}{2d} \frac{1}{M(G_*)} \right)^{\frac{d-2}{2}},
    \end{align*}
    which concludes the proof.
    \end{proof}

    It remains to prove the result in the critical case $q = \frac{d}{d-2}$. 

    \begin{lemma}[Limit $\mu \to 0$, critical case] Assume $q = \frac{d}{d-2}$. There exists a sequence $(\lambda_\mu)_{\mu}$, $\lambda_\mu\in (0,\infty)$ such that the rescaled function 
    \begin{equation*}
        \lambda_{\mu}^{\frac{d-2}{d}}u_{\mu}(\lambda_\mu\cdot)
    \end{equation*}
    converges strongly in $\dot{H^1}\cap L^{\frac{2d}{d-2}}(\RR^d)$, to the
    Sobolev optimizer 
    \begin{equation}\label{eq:AubinTalenti}
        S(\bx)=\left(1+\frac{|\bx|^2}{(d-2)^2}\right)^{-\frac{d-2}{2}}.
    \end{equation}
     In addition, for $\mu$ small enough, we have $\cF_1(u_\mu) > 0$.
\end{lemma}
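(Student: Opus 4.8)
The plan is to follow the scheme of the super--critical case, the new feature being that for $q=\tfrac{d}{d-2}$ the limiting profile is the Aubin--Talenti bubble~\eqref{eq:AubinTalenti} and the variational problem $M(G_*)$ is \emph{not attained}; consequently the heart of the argument is a quantitative compactness statement for Sobolev--minimising sequences rather than a passage to the limit in a family of optimisers. I first identify $M(G_*)$ with the sharp Sobolev constant $S_d$. When $q=\tfrac{d}{d-2}$ one has $2q=\tfrac{2d}{d-2}$, so $G_*(t)=\tfrac12 t^{2d/(d-2)}$ on $[0,1]$, while for $t\ge1$ the leading power drops to $2r<\tfrac{2d}{d-2}$; hence $G_*(t)\le\tfrac12 t^{2d/(d-2)}$ for all $t\ge0$, with equality precisely on $[0,1]$. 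Combined with $\|v\|_{L^{2d/(d-2)}}^{2}\le S_d^{-1}\|\nabla v\|_{L^2}^{2}$ this gives $M(G_*)\le\tfrac12 S_d^{-d/(d-2)}$. For the converse, I would test~\eqref{eq:M(G)} with the dilated bubbles $w_\epsilon(\bx):=\epsilon^{(d-2)/2}S(\epsilon\bx)/\|\nabla S\|_{L^2}$: they are admissible, $\|w_\epsilon\|_{L^\infty}\to0$ as $\epsilon\to0$, so $w_\epsilon\le1$ and $G_*(w_\epsilon)=\tfrac12 w_\epsilon^{2d/(d-2)}$, whence $\int_{\R^d}G_*(w_\epsilon)=\tfrac12 S_d^{-d/(d-2)}$ for all small $\epsilon$. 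Thus $M(G_*)=\tfrac12 S_d^{-d/(d-2)}$, approached only along spreading bubbles.

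Next I would prove $M(G_{\mu})\to M(G_*)$ as $\mu\to0$. The bound $M(G_{\mu})\le M(G_*)$ is immediate as in~\eqref{eq:lim_MGmu_0_upper}. For the lower bound, when $d\ge5$ (so $S\in L^2$) I would insert $w_\epsilon$, $\epsilon$ fixed small, into $M(G_{\mu})$ and use $\int G_{\mu}(w_\epsilon)=\int G_*(w_\epsilon)-\tfrac\mu2\|w_\epsilon\|_{L^2}^2\to M(G_*)$; when $d\in\{3,4\}$, where $S\notin L^2$, I would truncate $S$ at radius $R$, renormalise the gradient, and choose $R=R(\mu)\to\infty$ balancing the truncation errors against $\mu\|v_R\|_{L^2}^2$, exactly as in the super--critical proof with $R_n=\mu_n^{-1/2}$. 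Once $M(G_{\mu})\to M(G_*)$ holds, the chain $\tfrac12\|v_{\mu}\|_{L^{2d/(d-2)}}^{2d/(d-2)}\ge\int G_*(v_{\mu})\ge\int G_{\mu}(v_{\mu})=M(G_{\mu})$ together with $\int G_*(v_{\mu})\le\tfrac12 S_d^{-d/(d-2)}=M(G_*)$ forces both $\mu\|v_{\mu}\|_{L^2}^2\to0$ and $\|v_{\mu}\|_{L^{2d/(d-2)}}^{2}\to S_d^{-1}$, while $\|\nabla v_{\mu}\|_{L^2}^2=1$. In other words $(v_{\mu})$ is a positive radial decreasing minimising sequence for the Sobolev quotient; in the same step $\theta_{\mu}=\tfrac{d-2}{2d}M(G_{\mu})^{-1}\to\theta_*:=\tfrac{d-2}{2d}M(G_*)^{-1}\in(0,\infty)$.

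I would then invoke the classical concentration--compactness classification of Sobolev--minimising sequences (which here reduces to a single bubble, no $L^{2d/(d-2)}$ mass being lost): there are dilations $\sigma_{\mu}>0$ so that, up to a subsequence, $\sigma_{\mu}^{(d-2)/2}v_{\mu}(\sigma_{\mu}\,\cdot)$ converges strongly in $\dot H^1(\R^d)$ to a multiple of an Aubin--Talenti function, radiality killing the translation freedom. Strong $\dot H^1$ convergence together with the convergence $\|v_{\mu}\|_{L^{2d/(d-2)}}\to S_d^{-1/2}$ of the norms upgrades, by uniform convexity of $L^{2d/(d-2)}$, to strong $L^{2d/(d-2)}$ convergence. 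Normalising $\sigma_{\mu}$ so that the limit is exactly $S$, and composing with the fixed rescaling~\eqref{eq:relation_u_v} $u_{\mu}=v_{\mu}(\theta_{\mu}^{-1/2}\cdot)$ (with $\theta_{\mu}\to\theta_*$), the scale factors collapse into a single parameter $\lambda_{\mu}$ for which $\lambda_{\mu}^{(d-2)/d}u_{\mu}(\lambda_{\mu}\,\cdot)\to S$ strongly in $\dot H^1\cap L^{2d/(d-2)}(\R^d)$; the full sequence converges since all Aubin--Talenti functions differ by the dilation that has been divided out.

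Finally, for $\cF_1(u_{\mu})>0$: since $\mu>0$ the solution $u_{\mu}$ decays exponentially (Proposition~\ref{prop:existence_ground_state}) and solves $-\Delta u_{\mu}=G_{\mu}'(u_{\mu})$, so Pohozaev's identity gives $\|\nabla u_{\mu}\|_{L^2}^2=\tfrac{2d}{d-2}\int G_{\mu}(u_{\mu})$. Using $F_1(t^2)=-2G_{\mu}(t)-\mu t^2$ one gets
\[
    \cF_1(u_{\mu})=\|\nabla u_{\mu}\|_{L^2}^2-2\int_{\R^d}G_{\mu}(u_{\mu})-\mu\|u_{\mu}\|_{L^2}^2=\frac{4}{d-2}\int_{\R^d}G_{\mu}(u_{\mu})-\mu\|u_{\mu}\|_{L^2}^2 ,
\]
and by~\eqref{eq:relation_u_v}, $\int G_{\mu}(u_{\mu})=\theta_{\mu}^{d/2}M(G_{\mu})\to\theta_*^{d/2}M(G_*)>0$ while $\mu\|u_{\mu}\|_{L^2}^2=\theta_{\mu}^{d/2}\,\mu\|v_{\mu}\|_{L^2}^2\to0$, so $\cF_1(u_{\mu})\to\tfrac{4}{d-2}\theta_*^{d/2}M(G_*)>0$, hence $\cF_1(u_{\mu})>0$ for $\mu$ small. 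The main obstacle is the pair of Steps above: because $M(G_*)$ is not attained one cannot argue as in the sub-/super-critical cases by taking the limit of the optimisers $v_{\mu}$, and must instead recognise $(v_{\mu})$ as a Sobolev-minimising sequence and apply the non-compact concentration-compactness dichotomy; moreover the test-function construction behind $M(G_{\mu})\to M(G_*)$ is itself delicate in dimensions $d=3,4$, where $S\notin L^2$ and a $\mu$-dependent truncation must be inserted.
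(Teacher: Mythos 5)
Your proposal is correct and follows essentially the same route as the paper: both identify $M(G_*)$ with the critical Sobolev problem (the paper via the auxiliary pure-power functional $\widetilde{G}_*(t)=\tfrac12 t^{2d/(d-2)}$ and the identity $M(\widetilde{G}_*)=M(G_*)$, you via the sharp constant $S_d$), prove $M(G_{\mu})\to M(G_*)$ by the same $d$-dependent test-function/truncation argument borrowed from the super-critical case, and then invoke Lions' concentration--compactness for the critical exponent to get precompactness of $(v_{\mu})$ modulo dilation, concluding with the identical Pohozaev computation for $\cF_1(u_\mu)>0$. The one inaccuracy is your claim that $M(G_*)$ is \emph{not} attained: it is attained, by any Aubin--Talenti dilate with sup-norm at most $1$ (this is precisely how the paper proves $M(G_*)=M(\widetilde{G}_*)$); what matters, and what you correctly use, is that maximizing sequences are compact only up to dilation, which is why the rescaling $\lambda_\mu$ appears in the statement.
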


The function $S(\cdot)$ is also the unique positive radial-decreasing solution (up to dilations) the ``Emden–
Fowler'' equation
\begin{equation}\label{eq:emden-fowler}
    -\Delta S =\frac{d}{d-2} S^{\frac{d+2}{d-2}}.
\end{equation}
Since $S\le 1$ pointwise, it is also a solution to the zero--mass equation~\eqref{eq:zero-mass} when $q = \frac{d}{d-2}$.

\begin{proof}
    Let $q = \frac{d}{d-2}$, we introduce the modified function $\widetilde{G}_*(t):=\frac{1}{2}t^{2q} = \frac{1}{2} t^{\frac{2d}{d-2}}$ for all $t\ge 0$, and we consider the corresponding optimization problem $M(\widetilde G_*)$ defined by \eqref{eq:M(G)}. Note that the function $G_*$ does not satisfy the hypothesis of Lemma~\ref{lem:BL83_case1} nor the ones of Lemma~\ref{lem:BL83_case2}, since $\widetilde{g}_* := \widetilde{G}_*'$ satisfies 
    \[
        \lim_{t \to 0} \widetilde{G}_*(t) t^{\frac{d+2}{d-2}} = \frac{d}{d-2} > 0.
    \]
    In this case, we resort to the results of~\cite{Lio-85a} (which deals with the critical case). It follows from \cite[Theorem I.1]{Lio-85a} that all radial decreasing maximizing sequences for $M(\tilde G_*)$ are pre-compact in $\dot{H^1}(\R^d)$ up to a dilation. As a consequence, $M(\widetilde G_*)$ admits an optimizer $\widetilde v^*$ which satisfies the equation 
    \begin{equation*}
        -\Delta \widetilde v_*=\widetilde \theta_* \frac{d}{d-2}\tilde v_*^{\frac{d+2}{d-2}}\quad \text{ with }\quad  \widetilde \theta_*=\frac{d-2}{2d}\frac{1}{M(\widetilde G_*)}.
    \end{equation*}
    More precisely, any optimizer of $M(\widetilde G_*)$ is of the form $\widetilde v_{*,\lambda}(\bx)=\lambda^{\frac{d-2}{2}}\widetilde v_{*,1}(\lambda\bx)$ with $\widetilde v_{*,1}(\bx)= S({\widetilde \theta_*}^{1/2}\bx)$. Here $S$ is the Aubin--Talenti function defined by~\eqref{eq:AubinTalenti}. Since $S\le 1$ pointwise, the function $S$ is a solution to the zero--mass equation~\eqref{eq:zero-mass}. This is also true for $\tilde v_{*,\lambda}$ whenever $\lambda\le 1$.

    We claim that $M(\widetilde G_*)=M(G_*)$. Indeed, on the one hand, since $G_*\le \widetilde G_*$, we have $M(\widetilde G_*)\ge M(G_*)$. On the other hand, let $\widetilde v_*$ be an optimizer of $M(\widetilde G_*)$. Using the dilation described above, $\widetilde v_*$ can be chosen such that $|\widetilde v_*|\le 1$. As a consequence, $\widetilde G_*(\widetilde v_*)=G_*(\widetilde v_*)$ and 
    \begin{equation*}
        M(G_*)\ge \int_{\R^d} G_*(\widetilde v_*)= \int_{\R^d} \widetilde G_*(\widetilde v_*)= M(\widetilde G_*).
    \end{equation*}
    
    For any $\mu>0$, let $v_\mu$ be the unique positive radial decreasing optimizer of $M(G_\mu)$. We have $v_\mu\in H^1(\R^d)$, $\int_{\R^d}G_\mu(v_\mu)=M(G_\mu)>0$ and $\| \nabla v_\mu \|_{L^2}^2 = 1$. Our goal is to show that, for any sequence $(\mu_n)_n$ such that $\lim_{n\to\infty}\mu_n=0$, $(v_{\mu_n})_{n}$ is a maximizing sequence for $M(\widetilde G_*)$.

    As $\tilde{v}_*$ is an optimizer of $M(G_*)$, we can reason as in the proof of~\eqref{eq:lim_MGmu_0} to obtain 
    \begin{equation}\label{eq:lim_MGmu_star}
        \lim_{n\to\infty}M(G_{\mu_n})=M( G_*)= M(\tilde G_*).
    \end{equation}
    Moreover, for any $n\in \N$,
    \begin{align*}
        0\le \frac{\mu_n}{2}\int_{\R^d}v_{\mu_n}^2=\int_{\R^d} G_*(v_{\mu_n})-\int_{\R^d} G_{\mu_n}(v_{\mu_n})\le \int_{\R^d} \widetilde G_*(v_{\mu_n})-M(G_{\mu_n})\le M(\widetilde G_*)-M(G_{\mu_n}),
    \end{align*}
    As a consequence,
    \begin{equation}
        \label{eq:convL2critical}
        \lim_{n\to +\infty}\mu_n\|v_{\mu_n}\|_{L^2}^2=0.
    \end{equation}
    and
    \begin{align*}
        0\le M(\widetilde G_*)-\int_{\R^d}\widetilde G_*(v_{\mu_n})\le M(\widetilde G_*)-\int_{\R^d} G_*(v_{\mu_n})=M(\widetilde G_*)-M(G_{\mu_n})-\frac{\mu_n}{2}\int_{\R^d}v_{\mu_n}^2\xrightarrow[n \to \infty] {} 0.
    \end{align*}
    Hence, $(v_{\mu_n})_n$ is a maximizing sequence for $M(G_*)$. It follows from~\cite[Theorem I.1]{Lio-85a} that there exists a sequence $(\lambda_n)_n$ such that the rescaled sequence $(\lambda_n^{\frac{d-2}{d}}\tilde v_{\mu_n}(\lambda_n\cdot))_n$ converges strongly in $\dot{H^1}(\R^d)$ to $\tilde v_{*,1}$, up to an undisplayed subsequence. Since $u_\mu(\bx) = v_\mu(\theta_\mu^{-1/2} \bx)$ with $\theta_\mu = \left( \frac{d-2}{2d} \right) \frac{1}{M(G_\mu)}$ and $S(\bx) = \tilde v_{*,1}(\tilde \theta_*^{-1/2} \bx)$, we conclude that the rescaled sequence
    \begin{equation*}
        \lambda_n^{\frac{d-2}{d}}u_{\mu_n}(\lambda_n\cdot)
    \end{equation*}
    converges to $S$ strongly in $\dot{H^1}(\R^d)$.  The proof $\cF_1(u_\mu) > 0$ for $\mu$ small enough follows the same lines as the proof of~\eqref{eq:F1_u_mu_supercritical}.
\end{proof}

\subsection{The limit $\mu \to \infty$}

The behaviour of $u_\mu$ in the limit $\mu \to \infty$ is more involved. 
One reason is that the functions 
\begin{equation*}
    g_\mu(t)=-\mu t+\begin{cases}
        qt^{2q-1}&t\le 1\\
        - \fb t + \fc r  t^{2r-1} & t\ge 1
    \end{cases}
\end{equation*}
diverge pointwise to $- \infty$ as $\mu \to \infty$. One way to handle this divergence is to make a change of variable. We  introduce
\[
     \widetilde{g}_\mu(t) := A g_\mu( B t),
\]
where the positive parameters $A = A(\mu)$ and $B = B(\mu)$ will be chosen below.
It is easy to check that $u_\mu$ is a solution to 
$-\Delta u=g_\mu(u)$
iff $\widetilde{u}_\mu$ is a solution to  $-\Delta u= \widetilde{g}_\mu(u)$, with
\[
    \widetilde{u}_\mu(\bx) = B^{-1} u_\mu \left( \sqrt{AB} \bx\right) .
\]
Let us now choose the parameters $A$ and $B$ that will serve our purposes. We have
\begin{align*}
    \widetilde{g}_\mu(t)&=-AB \mu  t+\begin{cases}
        A B^{2q-1}q t^{2q-1}&Bt\le 1\\
        - AB\fb t + AB^{2r-1}\fc r  t^{2r-1} & Bt\ge 1
    \end{cases}\\
    &=-AB (\mu+\fb)  t+ AB^{2r-1}\fc r  t^{2r-1}+\begin{cases}
        AB\fb t+ A B^{2q-1}q t^{2q-1}- AB^{2r-1}\fc r  t^{2r-1}&Bt\le 1\\
        0 & Bt\ge 1.
    \end{cases}
\end{align*}
We make the following choice
\[
    \begin{cases}
        A & =  (\fb + \mu)^{-\frac{2r-1}{2(r-1)}}\\
        B & = (\fb + \mu)^{\frac{1}{2(r-1)}}
    \end{cases} , \quad \text{so that} \quad
    \begin{cases}
     AB \left({\fb} +  \mu \right) & = 1 \\
    A B^{2r-1} & = 1
    \end{cases}
\]
In particular, we have $B \to \infty$ as $\mu \to \infty$, so the condition $Bt \le 1$ is satisfied on a smaller and smaller interval as $\mu$ increases. Actually, with these values, the function $\widetilde{g}_\mu$ simplifies into
\begin{align*}
    \widetilde{g}_\mu(t)=- t+ \fc r  t^{2r-1}+\begin{cases}
        \frac{\fb}{\fb +\mu} t+ (\fb +\mu)^{\frac{q-r}{r-1}}q t^{2q-1}-\fc r  t^{2r-1}&t\le (\fb + \mu)^{-\frac{1}{2(r-1)}}\\
        0 & t\ge (\fb + \mu)^{-\frac{1}{2(r-1)}}
    \end{cases}
\end{align*}

We have the following result.
\begin{lemma}\label{lem:asymp_infinity}
    Let $Q$ be the (unique) ground state to the equation
    \begin{equation}\label{eq:semi-linear-v0}
        - \Delta w = - w + \fc r w^{2r-1}.
    \end{equation}
    For $\mu > 0$ (large), we introduce the rescaled function $\widetilde{u}_\mu(\bx) := (\mu+\fb)^{-\frac{1}{2(r-1)}} u_\mu( \bx/\sqrt{\mu+\fb})$. Then, we have, in the limit $\mu \to \infty$,
     \[
        \left\| \widetilde{u}_\mu -  Q \right\|_{H^2} = o(1).
    \]
    In particular, we have
    \[
         \Lambda(\mu) = \| u_\mu \|_2^2 =  (\mu+\fb)^{\frac{d}{2(r-1)}( \frac{d+2}{d} - r)} \| Q \|_2^2 (1 + o(1)) \xrightarrow[\mu \to \infty]{} \infty.
    \]
\end{lemma}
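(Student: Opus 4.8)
The plan is to show that the rescaled ground states $\widetilde u_\mu$ converge to $Q$ as $\mu\to\infty$ by viewing them as ground states of a family of semilinear problems whose nonlinearities converge to $g_\infty(t):=-t+\fc r\,t^{2r-1}$, and then to read off the asymptotics of $\Lambda(\mu)$ from the explicit scaling $u_\mu(\by)=(\fb+\mu)^{\frac{1}{2(r-1)}}\widetilde u_\mu(\sqrt{\fb+\mu}\,\by)$. From the explicit formula for $\widetilde g_\mu$ above, $\widetilde u_\mu$ is the unique positive radial decreasing solution of $-\Delta\widetilde u_\mu=\widetilde g_\mu(\widetilde u_\mu)$, and $\widetilde g_\mu$ differs from $g_\infty$ only on the shrinking interval $[0,\epsilon_\mu]$ with $\epsilon_\mu:=(\fb+\mu)^{-\frac{1}{2(r-1)}}\to 0$, while $\|\widetilde g_\mu-g_\infty\|_{L^\infty([0,\infty))}\to 0$ and hence $\|\widetilde G_\mu-G_\infty\|_{L^\infty([0,\infty))}\to 0$, where $G_\infty(t)=-\tfrac12 t^2+\tfrac\fc2 t^{2r}$. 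The equation $-\Delta Q=g_\infty(Q)$ is of the type covered by Proposition~\ref{prop:existence_ground_state} (here $1<2r-1<\frac{d+2}{d-2}$ because $r<\frac{d}{d-2}$), so $Q$ exists, is the unique positive radial decreasing solution \cite{Kwo-89,McL-93}, lies in $H^1\cap C^2$, and decays exponentially.

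\emph{Uniform $H^1$ bounds.} We claim $c\le\|\widetilde u_\mu\|_{H^1}\le C$ for $\mu$ large, which is the delicate point. Testing the equation against $\widetilde u_\mu$ and bounding the contribution of the correction (supported where $\widetilde u_\mu\le\epsilon_\mu$) by $o(1)\|\widetilde u_\mu\|_2^2$ gives
\[
   \|\nabla\widetilde u_\mu\|_2^2+(1+o(1))\|\widetilde u_\mu\|_2^2=\fc r\int_{\R^d}\widetilde u_\mu^{2r},
\]
and together with Gagliardo--Nirenberg ($2r<\frac{2d}{d-2}$) this forces $\|\widetilde u_\mu\|_{H^1}\gtrsim 1$, since $r>1$ and $\widetilde u_\mu\neq 0$. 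For the upper bound (in $d\ge 3$) we use the variational characterization of Lemma~\ref{lem:BL83_case1}: $\widetilde u_\mu(\bx)=v_\mu(\widetilde\theta_\mu^{-1/2}\bx)$ with $v_\mu$ the normalized optimizer of $M(\widetilde G_\mu)$ and $\widetilde\theta_\mu=\tfrac{d-2}{2d}M(\widetilde G_\mu)^{-1}$, whence $\|\nabla\widetilde u_\mu\|_2^2=\widetilde\theta_\mu^{(d-2)/2}$. Testing $M(\widetilde G_\mu)$ with the exponentially decaying optimizer $v_\infty$ of $M(G_\infty)$, and using $\|\widetilde G_\mu-G_\infty\|_\infty\to 0$ together with $|\{v_\infty\ge\epsilon_\mu\}|\le\epsilon_\mu^{-2}\|v_\infty\|_2^2$, gives $M(\widetilde G_\mu)\ge M(G_\infty)(1-o(1))\ge\tfrac12 M(G_\infty)>0$, so $\|\nabla\widetilde u_\mu\|_2^2$ is bounded; inserting this into the Nehari identity above together with Gagliardo--Nirenberg (the relevant exponent $d-r(d-2)$ lies in $(0,2)$) bounds $\|\widetilde u_\mu\|_2^2$ as well. (In $d=2$ one argues in the same way using \cite{BerGalOta-83}.) An elliptic bootstrap — legitimate since $r<\frac{d}{d-2}$ — then yields the uniform bounds $\|\widetilde u_\mu\|_{L^\infty}+\|\widetilde u_\mu\|_{H^2}\le C$. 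I expect this to be the main obstacle: the Pohozaev and Nehari identities by themselves only pin down the ratio $\|\nabla\widetilde u_\mu\|_2^2/\|\widetilde u_\mu\|_2^2$, and one must bring in the variational structure (or a uniform decay estimate) to prevent $\widetilde u_\mu$ from spreading out or concentrating as $\mu\to\infty$.

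\emph{Passage to the limit and identification.} Along any sequence $\mu_n\to\infty$, extract a weak $H^2$ limit $\widetilde u_{\mu_n}\rightharpoonup w$; by Strauss' theorem the convergence is strong in $L^p$ for $2<p<\frac{2d}{d-2}$, and, using the (a posteriori) uniform exponential decay of $\widetilde u_\mu$ obtained from a barrier argument in the region $\{\widetilde u_\mu\le\delta_0\}$ — which is uniformly bounded thanks to the $H^1$ bound — also strong in $L^2$. Passing to the limit in the weak formulation (the correction contributes $o(1)$), $w\ge 0$ is a radial non--increasing solution of $-\Delta w=g_\infty(w)$, and it is nontrivial because $\fc r\int\widetilde u_{\mu_n}^{2r}=\|\widetilde u_{\mu_n}\|_{H^1}^2+o(1)\ge c^2/2$ passes to the limit. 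By uniqueness of the positive radial decreasing solution, $w=Q$, and since the limit is independent of the subsequence, $\widetilde u_\mu\to Q$ weakly in $H^1$ and strongly in $L^2\cap L^{2r}$.

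\emph{Upgrade to $H^2$ and conclusion.} Strong $H^1$ convergence follows from $\|\nabla\widetilde u_\mu\|_2^2=\fc r\int\widetilde u_\mu^{2r}-\|\widetilde u_\mu\|_2^2+o(1)\to\|\nabla Q\|_2^2$ combined with weak convergence; then $\widetilde g_\mu(\widetilde u_\mu)\to g_\infty(Q)$ strongly in $L^2$ (the power nonlinearity by the uniform $L^\infty$ bound and strong $L^2$ convergence, the correction by its pointwise smallness), so $\Delta\widetilde u_\mu\to\Delta Q$ in $L^2$ and $\widetilde u_\mu\to Q$ in $H^2$. Finally, the scaling gives
\[
   \Lambda(\mu)=\|u_\mu\|_2^2=(\fb+\mu)^{\frac{1}{r-1}-\frac{d}{2}}\|\widetilde u_\mu\|_2^2=(\fb+\mu)^{\frac{d}{2(r-1)}\left(\frac{d+2}{d}-r\right)}\|\widetilde u_\mu\|_2^2,
\]
and since $\|\widetilde u_\mu\|_2^2\to\|Q\|_2^2$ while $\frac{d+2}{d}-r>0$, we conclude $\Lambda(\mu)=(\fb+\mu)^{\frac{d}{2(r-1)}\left(\frac{d+2}{d}-r\right)}\|Q\|_2^2\,(1+o(1))\to+\infty$.
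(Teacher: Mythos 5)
Your proposal is correct in its essentials, but it follows a genuinely different route from the paper's. The paper treats the rescaled equation as a perturbation of $-\Delta Q + Q = \fc r Q^{2r-1}$: it writes $\widetilde{u}_\mu = Q + w$, inverts the linearized operator $\cL_Q$ (using the non-degeneracy of $Q$ from Kwong), and runs a contraction-mapping argument in $W^{2,d}\cap H^2_{\rm rad}$, where the pointwise bounds on the correction term $E(\alpha,\cdot)$ (of size $O(\alpha^{2(r-1)})$, supported on $[0,\alpha]$) yield not only convergence but a quantitative rate $\|\widetilde{u}_\mu - Q\|_{H^2}\le \alpha_\mu^{\beta}$ for any $0<\beta<2(r-1)$. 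You instead argue by compactness: uniform $H^1$ bounds from the Nehari identity combined with the Berestycki--Lions variational characterization ($\|\nabla\widetilde{u}_\mu\|_2^2 = \widetilde{\theta}_\mu^{(d-2)/2}$ together with $M(\widetilde{G}_\mu)\ge M(G_\infty)(1-o(1))$), then extraction of a limit, identification via uniqueness of $Q$, and convergence of norms to upgrade to strong $H^1$ and then $H^2$ convergence. Your approach avoids the non-degeneracy of $Q$ and the fixed-point machinery, at the cost of losing the rate and of requiring the more delicate uniform estimates ($L^2$-tightness via the barrier argument, uniform $L^\infty$ via bootstrap) that the contraction argument gets for free; you correctly identify these as the crux. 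Two caveats. First, your upper bound on $\|\nabla\widetilde{u}_\mu\|_2^2$ degenerates in $d=2$, where the Pohozaev identity gives $\int G = 0$ rather than a proportionality to the Dirichlet energy, so the two-dimensional case genuinely cannot be handled ``in the same way'' and needs a separate argument (the paper's fixed-point proof is dimension-robust here). Second, in the identification step you should note that the nonnegative, nontrivial radial limit is strictly positive (strong maximum principle applied to $-\Delta+1$) before invoking the uniqueness theorem. Both points are repairable, and the rest of the argument (in particular the scaling identity for $\Lambda(\mu)$, which coincides with the paper's) is sound.
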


    For any $\mu>0$, let $\alpha = \alpha_\mu=(\fb + \mu)^{-\frac{1}{2(r-1)}}$. As explained above, the rescaled function $\widetilde{u}_\mu$ is the (unique) ground state of
    \begin{equation} \label{eq:IFT_NLS}
        - \Delta \widetilde{u} + \widetilde{u}=  \fc r \widetilde{u}^{2r-1} +  E(\alpha,\widetilde{u}),
    \end{equation}
    with the function
    \[
    E(\alpha,t) := \begin{cases}
        {\fb}\alpha^{2(r-1)} t +q \alpha^{-2(q-r)} t^{2q-1}- \fc r t^{2r-1} & \quad \text{if} \quad t < \alpha; \\
        0 & \quad \text{if} \quad t \ge \alpha.
        \end{cases}
    \]
    Note that $\alpha_\mu\to 0$ as $\mu\to \infty$ and   
$$
\Lambda(\mu)=\int\av{u_\mu}^2=\bra{\fb+\mu}^{\frac{d}{2(r-1)}\bra{\frac{d+2}{d}-r}}\norm{\widetilde{u}_\mu}^2_2= \frac{1}{\alpha_\mu^{d\bra{\frac{d+2}{d}-r}}}\norm{\widetilde{u}_\mu}^2_2. 
$$
Let us record some properties of the function $E$.
\begin{proposition}
    \label{prop:E_alpha}
    For all $\alpha > 0$, the function $E(\alpha,\cdot )$ is $C^1$, positive on $(0,\alpha)$, and satisfies the inequality
  \begin{align*}
        &0\le {E(\alpha,t)} 
        \leq (\fb+q)\alpha^{2(r-1)}t\quad \text{for all }t\in (0,\alpha),\\
        &\sup_t\av{\partial_tE(\alpha,t)} \le \bra{\fb+q(2q-1)+\fc r}\alpha^{2(r-1)}.
    \end{align*}
\end{proposition}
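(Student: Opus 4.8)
The plan is to scale out $\alpha$ and reduce all four assertions to elementary one-variable estimates for two fixed auxiliary functions. Setting $s := t/\alpha$, and using the identity $-2(q-r)+2(q-1) = 2(r-1)$, a direct substitution into the definition of $E$ gives
\[
    E(\alpha,t) = \alpha^{2r-1}\,\psi(t/\alpha), \qquad
    \psi(s) := \begin{cases} s\,h(s), & 0 \le s \le 1,\\[2pt] 0, & s \ge 1, \end{cases}
    \qquad h(s) := \fb + q\, s^{2(q-1)} - \fc r\, s^{2(r-1)},
\]
and hence $\partial_t E(\alpha,t) = \alpha^{2(r-1)}\psi'(t/\alpha)$, where on $[0,1]$ one computes $\psi'(s) = \phi(s) := \fb + q(2q-1)s^{2(q-1)} - \fc r(2r-1)s^{2(r-1)}$. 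It therefore suffices to prove: (a) $\psi \in C^1([0,\infty))$; (b) $h > 0$ on $[0,1)$; (c) $h(s) \le \fb + q$ on $[0,1]$; (d) $|\phi(s)| \le \fb + q(2q-1) + \fc r$ on $[0,1]$. Indeed (a) gives that $E(\alpha,\cdot)$ is $C^1$; (b) gives positivity of $E(\alpha,\cdot)$ on $(0,\alpha)$; multiplying (c) by $s\ge 0$ and using $\alpha^{2r-1}s = \alpha^{2(r-1)}t$ gives $0\le E(\alpha,t) \le (\fb+q)\alpha^{2(r-1)}t$ on $(0,\alpha)$ (and trivially for $t\ge\alpha$); and (d), together with $\psi'\equiv 0$ on $(1,\infty)$, gives the uniform bound on $\partial_t E(\alpha,\cdot)$.

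For (a), the piece $s\mapsto sh(s)$ is smooth on $(0,\infty)$ and $C^1$ at $0$ (its derivative $\fb + q(2q-1)s^{2q-2} - \fc r(2r-1)s^{2r-2}\to\fb$ as $s\to 0^+$, since $q,r>1$), and the constant $0$ is smooth, so the only thing to verify is that $\psi$ and $\psi'$ match at $s=1$. Using the explicit values $\fb(r-1) = q(q-r)$ and $\fc r(r-1) = q(q-1)$, one checks the two identities
\[
    h(1) = \fb + q - \fc r = 0, \qquad \phi(1) = \fb + q(2q-1) - \fc r(2r-1) = 0,
\]
each of which, after clearing the common denominator $r-1$, reduces to showing that a short polynomial in $q,r$ vanishes identically. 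Since $\psi(1^-)=h(1)=0=\psi(1^+)$ and $\psi'(1^-)=\phi(1)=0=\psi'(1^+)$, the function $\psi$ is $C^1$ across $1$, which proves (a).

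For (b), differentiating and using $2\fc r(r-1) = 2q(q-1)$ gives $h'(s) = 2q(q-1)\,s^{2r-3}\bigl(s^{2(q-r)}-1\bigr)$, which is strictly negative on $(0,1)$ because $q>r>1$; hence $h$ is strictly decreasing on $(0,1)$, and together with $h(0)=\fb>0$ and $h(1)=0$ we get $h>0$ on $[0,1)$. Claim (c) is immediate on $[0,1]$: $h(s)\le \fb + q\,s^{2(q-1)} \le \fb + q$, discarding the nonnegative term $\fc r s^{2(r-1)}$ and using $s\le 1$. For (d), on $[0,1]$ one has the two-sided bound
\[
    \fb - \fc r(2r-1) \;\le\; \phi(s) \;\le\; \fb + q(2q-1),
\]
obtained by inserting the extreme values $0$ and $1$ for $s^{2(q-1)}$ and $s^{2(r-1)}$; the upper bound is already $\le \fb + q(2q-1) + \fc r$, and for the lower one $\fc r(2r-1) = \fc r + 2\fc r(r-1) = \fc r + 2q(q-1) \le \fc r + q(2q-1) \le \fb + q(2q-1) + \fc r$, using $2(q-1)\le 2q-1$ and $\fb>0$; hence $|\phi|\le \fb + q(2q-1) + \fc r$ on $[0,1]$, completing the argument.

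I do not expect any real obstacle here: once the scaling $E(\alpha,t)=\alpha^{2r-1}\psi(t/\alpha)$ is in place, everything is a sequence of elementary monotonicity and size estimates for $h$ and $\phi$ on $[0,1]$. The only delicate point is the algebra making $E(\alpha,\cdot)$ exactly $C^1$ at $t=\alpha$ — the identities $h(1)=0$ and $\phi(1)=0$ — which is precisely where the prescribed values of $\fa,\fb,\fc$ (equivalently the $C^2$-regularity of $F_1$) are used.
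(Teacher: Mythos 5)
Your proof is correct and follows essentially the same route as the paper's: both hinge on the algebraic identities $\fb+q-\fc r=0$ and $\fb+q(2q-1)-\fc r(2r-1)=0$ (coming from the prescribed values of $\fb,\fc$, i.e.\ the $C^2$-regularity of $F_1$ at $t=1$) to obtain the $C^1$-matching at $t=\alpha$, followed by elementary one-variable estimates. The only difference is minor: you prove positivity by factoring $E(\alpha,t)=\alpha^{2r-1}(t/\alpha)\,h(t/\alpha)$ and showing $h$ is decreasing with $h(1)=0$, whereas the paper analyses the sign of $\partial_t^2E$ to conclude that $E$ is increasing then decreasing between its two zeros; your version also writes out explicitly the bounds that the paper merely declares ``easy to obtain''.
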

    \begin{proof}
    Differentiating $E$ w.r.t. to $t$ for $0 < t < \alpha$, we have
    \begin{align*}
        \partial_t E(\alpha,t)  & = {\fb}\alpha^{2(r-1)}+ 	 q(2q-1) \alpha^{-2(q-r)}t^{2(q-1)} - \fc r (2r-1)t^{2(r-1)} \\
        \partial_t^2 E(\alpha,t) & = 2t^{2r-3} \left(  q(2q-1)(q-1)  \alpha^{-2(q-r)} t^{2(q-r)} - \fc r (2r-1)(r-1)  \right).
    \end{align*}
For the continuity of $E(\alpha,\cdot )$ and its derivative, we note that for $t = \alpha$, we have
\[
E(\alpha,\alpha^-)= \left(\fb +q-\fc r\right) \alpha^{2r-1} = 0
\]
and 
$$
\partial_t E(\alpha,\alpha^-)= (\fb+ q(2q-1)-\fc r (2r-1))\alpha^{2(r-1)}=2(q(q-1)-\fc r (r-1))\alpha^{2(r-1)}=0.
$$
Next, we remark that $\partial_t ^2 E(\alpha,\cdot)$ vanishes only once in $(0,\alpha)$. As $\partial_t E(\alpha,0)>0$ and $\partial_t E(\alpha,\alpha)=0$, then  $\partial_t E(\alpha,\cdot)$ is positive, then negative. It follows that $E(\alpha,\cdot)$ is increasing then decreasing, but since $E(\alpha, 0)= E(\alpha, \alpha) = 0$, $E$ is positive on $(0, \alpha)$. The bounds are now easy to obtain. 
\end{proof}

Now, we show that $\widetilde{u}_\mu$ converges, in $H^2(\RR^d)$, to $Q$, the unique positive solution to 
$$
-\Delta w+ w= \fc r w^{2r-1}. 
$$    
We use a fixed point procedure. We introduce the linearized operator $\cL_Q : W^{2,p}\cap H^2_{\rm rad} (\R^d) \to L^p \cap L^2_{\rm rad}(\R^d)$ defined by
    \[
        \cL_Q : v \mapsto - \Delta v + v - \fc r(2r-1) Q^{2r-2} v.
    \]
   As $Q$ is non-degenerate~\cite{Kwo-89}, then $\cL_Q$ is invertible for any $p\geq 2$.    
    For any $\alpha>0$, we write $\widetilde{u}_\alpha = Q + w_\alpha$, so that $\widetilde{u}_\alpha$ satisfies~\eqref{eq:IFT_NLS} iff $w_\alpha$ satisfies the fixed point equation $w_\alpha = G_\alpha(w_\alpha)$ with
    \[
        G_\alpha(w) := \cL_Q^{-1} \left\{ E(\alpha, Q + w) + \fc r \left[ (Q + w)^{2r-1} - Q^{2r-1} - (2r-1) Q^{2r-2} w  \right] \right\}.
    \]
   
    \begin{proposition}\label{prop:fixed_point_infinity} 
            There exists $\alpha_0>0$ such that, for all $\alpha\in (0,\alpha_0)$, there exists $\eta=\eta(\alpha)>0$
            so that the map $G_\alpha$ is a contraction on $\cB(\eta)$, where 
            \[
            \cB( \eta) := \left\{ w \in W^{2,d} \cap H^2_{\rm rad}(\R^d), \ \| w \|_{W^{2,d}} + \| w \|_{H^2} \le \eta \right\}.
            \] 
            In particular, the map $G_\alpha$ admits a unique fixed point in $\cB( \eta)$.
    \end{proposition}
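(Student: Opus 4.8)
The plan is to apply the Banach fixed point theorem, viewing $G_\alpha$ as a small perturbation of the zero map: the inhomogeneous term $\cL_Q^{-1}\big[E(\alpha, Q+\cdot)\big]$ has size and Lipschitz constant $O(\alpha^{2(r-1)})$ by Proposition~\ref{prop:E_alpha}, while the genuinely nonlinear part $w \mapsto \cL_Q^{-1}[N(w)]$, with $N(w) := \fc r\big[(Q+w)^{2r-1} - Q^{2r-1} - (2r-1)Q^{2r-2}w\big]$, vanishes to second order at $w = 0$ and is therefore Lipschitz on $\cB(\eta)$ with a constant that is small when $\eta$ is small. Throughout I write $\|w\|_X := \|w\|_{W^{2,d}} + \|w\|_{H^2}$, and I use repeatedly that $Q$ is positive, bounded and exponentially decaying (so $Q \in L^1 \cap L^\infty$, and $Q^s \in L^2 \cap L^d$ for every $s > 0$), the Sobolev embedding $W^{2,d}(\R^d) \hookrightarrow L^\infty(\R^d)$ (valid since $d \ge 3$, giving $\|w\|_{L^\infty} \lesssim \|w\|_X$ on $\cB(\eta)$), and the fact that $\cL_Q$, being invertible on $W^{2,p}\cap H^2_{\rm rad} \to L^p \cap L^2_{\rm rad}$ for every $p \ge 2$ by non-degeneracy of $Q$, satisfies $\|\cL_Q^{-1} f\|_X \le C_0 \|f\|_{L^2 \cap L^d}$ for some finite $C_0$.

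First I would estimate the inhomogeneous term. Proposition~\ref{prop:E_alpha} (together with $E(\alpha, 0) = 0$) gives, pointwise, $|E(\alpha, s)| \le C\alpha^{2(r-1)}|s|$ and $|E(\alpha, s) - E(\alpha, s')| \le C\alpha^{2(r-1)}|s - s'|$, with $C$ independent of $\alpha$; taking $s = Q+w$, $s' = Q+w'$ and integrating in $L^2$ and in $L^d$ yields
\[
    \|E(\alpha, Q+w)\|_{L^2 \cap L^d} \le C\alpha^{2(r-1)}\big(\|Q\|_{L^2\cap L^d} + \|w\|_X\big),
    \qquad
    \|E(\alpha, Q+w) - E(\alpha, Q+w')\|_{L^2\cap L^d} \le C\alpha^{2(r-1)}\|w - w'\|_X .
\]
Composing with $\cL_Q^{-1}$, both $\|\cL_Q^{-1}[E(\alpha, Q+w)]\|_X$ and the Lipschitz constant of $w \mapsto \cL_Q^{-1}[E(\alpha, Q+w)]$ on $\cB(\eta)$ are $O(\alpha^{2(r-1)})$, hence tend to $0$ as $\alpha \to 0$ because $r > 1$.

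Next I would estimate the nonlinear remainder. Writing $N(w) - N(w') = \fc r (2r-1) \int_0^1 \big[(Q+w_t)^{2r-2} - Q^{2r-2}\big](w-w')\,\rd t$ with $w_t := w' + t(w - w')$, I would bound the bracket pointwise by $C \|w_t\|_{L^\infty}^{\kappa}\big(Q^{2r-2-\kappa} + \|w_t\|_{L^\infty}^{2r-2-\kappa}\big)$, where $\kappa := \min(1, 2r-2) \in (0,1]$ — for $r \ge \tfrac32$ this is the usual increment estimate for the $C^1$ function $s \mapsto |s|^{2r-2}$, and for $1 < r < \tfrac32$ it is the $(2r-2)$-Hölder estimate for the same function (which is then not $C^2$); the condition $r > 1$ ensures $\kappa > 0$. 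Using the $L^\infty$ control of $w_t$ from $W^{2,d}$ and the boundedness and decay of $Q$, this gives, for $w, w' \in \cB(\eta)$,
\[
    \|N(w) - N(w')\|_{L^2 \cap L^d} \le C\,\eta^{\kappa}\,\|w-w'\|_X ,
    \qquad \text{and hence} \qquad
    \|N(w)\|_{L^2\cap L^d} \le C\,\eta^{1+\kappa}
\]
(the second bound by taking $w' = 0$ and $N(0) = 0$); composing with $\cL_Q^{-1}$, the map $w \mapsto \cL_Q^{-1}[N(w)]$ is Lipschitz on $\cB(\eta)$ with constant $O(\eta^{\kappa}) \to 0$ as $\eta \to 0$.

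Finally I would assemble the estimates. Since $G_\alpha(0) = \cL_Q^{-1}[E(\alpha, Q)]$, we have $\|G_\alpha(0)\|_X \le C_0 C\alpha^{2(r-1)}\|Q\|_{L^2\cap L^d}$, and for $w, w' \in \cB(\eta)$,
\[
    \|G_\alpha(w) - G_\alpha(w')\|_X \le C_0\big(C\alpha^{2(r-1)} + C\eta^{\kappa}\big)\,\|w - w'\|_X .
\]
Choosing $\eta = \eta(\alpha) := \alpha^{r-1}$ (any choice with $\eta \to 0$ and $\alpha^{2(r-1)} = o(\eta)$ as $\alpha \to 0$ works), there is $\alpha_0 > 0$ such that for all $\alpha \in (0, \alpha_0)$ one has simultaneously $C_0(C\alpha^{2(r-1)} + C\eta^{\kappa}) \le \tfrac12$ and $\|G_\alpha(0)\|_X \le \tfrac{\eta}{2}$; then $\|G_\alpha(w)\|_X \le \tfrac12\|w\|_X + \|G_\alpha(0)\|_X \le \eta$, so $G_\alpha$ maps the complete metric space $\cB(\eta)$ into itself and is a contraction there, and the Banach fixed point theorem provides a unique fixed point $w_\alpha \in \cB(\eta)$. (Moreover $\|w_\alpha\|_X = \|G_\alpha(w_\alpha)\|_X \le \tfrac12\|w_\alpha\|_X + \|G_\alpha(0)\|_X$ forces $\|w_\alpha\|_X \le 2\|G_\alpha(0)\|_X = O(\alpha^{2(r-1)}) \to 0$, which — once $Q + w_\alpha$ is identified with $\widetilde u_\mu$ — gives the $o(1)$ statement of Lemma~\ref{lem:asymp_infinity}.) The step I expect to be the main obstacle is the nonlinear estimate of the third paragraph: making it uniform across the sub-range $1 < r < \tfrac32$, where $s \mapsto |s|^{2r-1}$ is only $C^1$ with Hölder-continuous derivative so the second-order Taylor remainder must be replaced by a Hölder bound, and keeping the two topologies consistent — the $L^\infty$ bound on $w$ comes only from the $W^{2,d}$-part, whereas the mapping properties of $\cL_Q^{-1}$ that are invoked are $L^2$-based — which is exactly why the combined norm $\|\cdot\|_{W^{2,d}} + \|\cdot\|_{H^2}$ appears in the definition of $\cB(\eta)$.
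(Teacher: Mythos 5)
Your proposal is correct and follows essentially the same route as the paper: the same decomposition of $G_\alpha$ into the $E(\alpha,\cdot)$ term (controlled via Proposition~\ref{prop:E_alpha}) and the quadratic remainder, the same use of the non-degeneracy of $Q$ to invert $\cL_Q$ on $L^d\cap L^2_{\rm rad}$, the same space $W^{2,d}\cap H^2_{\rm rad}$ to gain $L^\infty$ control, and the same choice $\eta=\alpha^{\beta}$ with $0<\beta<2(r-1)$ as in Remark~\ref{rem:convergence_fixedpoint}. The only cosmetic differences are that you derive invariance of $\cB(\eta)$ from the Lipschitz bound plus $\|G_\alpha(0)\|$ rather than estimating $\|G_\alpha(w)\|$ directly, and that you treat the low-regularity range $1<r<\tfrac32$ with a unified H\"older exponent $\kappa=\min(1,2r-2)$ where the paper splits into the cases $|w|\le Q$ and $w\ge Q$.
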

Our choice of the space is such that $W^{2,d}(\RR^d)\subset C^{0,\gamma}(\RR^d)$, $\forall \gamma\in (0,1)$. 

\begin{proof}
    Let us first prove that for $\alpha$ and $\eta$ small enough, the map $G_\alpha$ leaves $\cB( \eta)$ invariant.  
  Since $\cL_Q^{-1}: L^p\cap L^2_{\rm rad} (\RR^d)\to W^{2,p}\cap H^2_{\rm rad} (\RR^d)$ is bounded, it is enough to control, for all $w\in \cB(0, \eta)$
  $$
 \norm{E(\alpha, Q + w) + \fc r \left[ (Q + w)^{2r-1} - Q^{2r-1} - (2r-1) Q^{2r-2} w
 	\right]   }_{L^p}$$
for $p\in\set{2,d}$.    
    For $w \in \cB(\eta)$, we have from Proposition~\ref{prop:E_alpha} that 
    \[
        \left\| E(\alpha, Q + w ) \right\|_p \le \bra{\fb+ q }\alpha^{2(r-1)} \| Q + w \|_p \le \bra{\fb+ q } \alpha^{2(r-1)} \left( \| Q \|_p + \eta \right).
    \]
  On the other hand, we have, with  $G(w) :=  |Q + w|^{2r-1} - Q^{2r-1} - (2r-1) Q^{{2r-2}} w$
    \[
  \av{G(w)}\le \begin{cases}
            C w^{2r-1} \quad & \text{for} \quad w \ge Q, \\
            C Q^{2r-3} w^2 = C (w/Q)^{-2r+1} w^{2r-1} \quad & \text{for} \quad | w | \le Q.
        \end{cases}
    \]
    If $2r-3\geq 0$, then 
    $$
\norm{   G(w)}_{p}\leq C \bra{\norm{Q}_\ii^{2r-3}+\norm{w}_\ii^{2r-3}}\norm{w}_p^2\leq C\bra{\norm{Q}_\ii^{2r-3}+\eta^{2r-3}}\eta^2. 
    $$
    If $2r-3<0$, when $\av{w}\leq Q$, we also have  $\av{G(w)} \leq C \av{w}^{2r-1}$, hence 
    \[
       \norm{G(w)}_p \le C  \eta^{2r - 1}.
    \]
    It follows that 
    \begin{equation}\label{eq:bound-on-G}
\norm{ G_\alpha(w)}_{W^{2,d}\cap H^2}\leq C \norm{\cL_Q^{-1}}\bra{ \alpha^{2r-2}\bra{\norm{Q}_p+\eta}+ \eta^2\bra{\norm{Q}_\ii^{2r-3}+ \eta^{2r-3}}}. 
    \end{equation}
    For any $\alpha$ small enough, we can thus choose $\eta$, which depends on $\alpha$,
    such that the R.H.S. of~\eqref{eq:bound-on-G} is smaller than $\eta$, thus $G_\alpha$ leaving the set $\cB(\eta)$ invariant.
    
    \medskip
    
    We now prove that $G_\alpha$ is a contraction. First, we have, using again Proposition~\ref{prop:E_alpha},
    \[
        \left| E(\alpha, w) - E(\alpha, v) \right| \le \| E'(\alpha; .) \|_\infty | w - v | \le C \alpha^{2(r-1)} | w - v |,
    \]
    so
    \[
        \left\| E(\alpha, w) - E(\alpha, v) \right\|_p \le C \alpha^{2(r-1)} \| w - v \|_p.
    \]
    Next, we have 
    \begin{align*}
      &   \left|G(w) - G(v) \right| \le (2r-1) \left( \int_0^1 \left| (Q + tw + (1 - t)v )^{2r-2} - Q^{2r-2} \right| \rd t \right)  | w - v |.
    \end{align*}
    Setting $z = tw + (1 - t)v$, we have
    \[
\av{    |Q + z|^{2r-2} - Q^{2r-2}} \le \begin{cases}
        C z^{2r-2} \quad & \text{for} \quad z \ge Q, \\
        C (2r-2) Q^{2r-3} z = C (2r-2) (z/Q)^{3-2r} z^{2r-2} \quad & \text{for} \quad | z | \le Q.
    \end{cases}
    \]
    Again, in both cases, we get an upper bound of the form 
     \begin{align*}
         \left\| G(w) - G(v) \right\|_p  \le C \left( \norm{Q}_\ii^{2r-3} + \eta^{2r-3} \right) \eta  \norm{ w - v }_p.
    \end{align*}
    Altogether, we have proved that
    \[
        \left\| G_\alpha(w) - G_\alpha(v) \right\|_{H^2 \cap W^{2,d}} \le C \left( \alpha^{2(r-1)} +  \eta ^{2(r-1)} + \norm{Q}_\ii \eta \right) \| w - v \|_{H^2\cap W^{2,d}}.
    \]
    So, for $\eta$ and $\alpha$ small enough, we can apply the fixed point theorem, and deduce that $G_\alpha$ has a unique fixed point in $\cB(\eta)$. 
\end{proof}
\begin{remark}\label{rem:convergence_fixedpoint}
    In the proof of Proposition~\ref{prop:fixed_point_infinity}, given $\alpha$ small enough, $\eta$ has to be choosen such that 
    \begin{align*}
        C \norm{\cL_Q^{-1}}\bra{ \alpha^{2r-2}\bra{\norm{Q}_p+\eta}+ \eta^2\bra{\norm{Q}_\ii^{2r-3}+ \eta^{2r-3}}}\le \eta\\
        C \left( \alpha^{2(r-1)} +  \eta ^{2(r-1)} + \norm{Q}_\ii \eta \right)<\lambda
    \end{align*}
    for some $\lambda\in (0,1)$. In particular, one can choose $\eta=\alpha^{\beta}$ with $0<\beta<2(r-1)$. 
\end{remark}

Since $\widetilde{u}_\mu$ is the (unique) ground state of~\eqref{eq:IFT_NLS} with $\alpha=\alpha_\mu$, we deduce that $\widetilde{u}_\mu=Q+w_{\alpha_\mu}$ with $w_{\alpha_\mu}$ the unique fixed point of $G_{\alpha_\mu}$. As a consequence, 
\[
   \left\| \widetilde{u}_\mu -  Q \right\|_{H^2}= \left\| w_{\alpha_\mu} \right\|_{H^2} =  \left\| G_{\alpha_\mu}(w_{\alpha_\mu}) \right\|_{H^2}\le \alpha_\mu^{\beta}=\left(\frac{1}{\fb +\mu}\right)^{\frac{\beta}{2(r-1)}} \xrightarrow[\mu \to \infty]{} 0.
\]

This concludes the proof of Lemma~\ref{lem:asymp_infinity}.


\subsection*{Conflict of interest}
The authors declare no conflict of interest.

\printbibliography

\end{document}